\newenvironment{tightenumerate}{
\begin{enumerate}[(1)]
  \setlength{\itemsep}{3pt}
  \setlength{\parskip}{0pt}
}{\end{enumerate}}
\numberwithin{equation}{section}
\theoremstyle{plain}
	\newtheorem{theorem}{Theorem}[section]           
	\newtheorem{corollary}[theorem]{Corollary}             
	\newtheorem{lemma}[theorem]{Lemma}                 
	\newtheorem{proposition}[theorem]{Proposition}          
	\newtheorem{claim}[theorem]{Claim}				
	\newtheorem{assumption}{Assumption}[section]            
\theoremstyle{definition}
\theoremstyle{remark}
	\newtheorem{remark}[theorem]{Remark}                
\newcommand\bR{\mathbf{R}}
\newcommand\bE{\mathbf{E}}
\newcommand\bP{\mathbf{P}}
\newcommand\bN{\mathbf{N}}
\newcommand\cB{\mathcal{B}}
\newcommand\cC{\mathcal{C}}
\newcommand\cF{\mathcal{F}}
\newcommand\cO{\mathcal{O}}
\newcommand\cX{\mathcal{X}}
\title{\vspace{-1.2in}On Some Properties of Space Inverses of Stochastic Flows  \vspace{-0.5cm}}
\begin{document}

\maketitle

\begin{tabular}{l}
\thispagestyle{fancy}
{\large\textbf{James-Michael Leahy}} \vspace{0.2cm}\\
 The University of Edinburgh, E-mail: \href{J.Leahy-2@sms.ed.ac.uk}{J.Leahy-2@sms.ed.ac.uk}\vspace{0.4cm} \\
{\large \textbf{Remigijus Mikulevi\v{c}ius}}\vspace{0.2cm}\\
 The University of Southern California, E-mail: \href{Mikulvcs@math.usc.edu }{mikulvcs@math.usc.edu}\vspace{0.4cm}\\
 {\large\textbf{Abstract}} \vspace{0.2cm} \\
 \begin{minipage}[t]{0.9\columnwidth}%
We derive moment estimates and a strong limit theorem for  space inverses of stochastic flows generated by jump SDEs with adapted coefficients in weighted H{\"o}lder norms using the Sobolev embedding theorem and the change of variable formula.  As an application of some basic properties of flows of continuous SDEs, we derive the existence and uniqueness of classical solutions of linear parabolic second order SPDEs by partitioning  the time interval and passing to the limit.  The methods we use allow us to improve on previously known results in the continuous case and to derive new ones in the jump case.  
 \end{minipage}
\end{tabular}

\tableofcontents
\thispagestyle{empty}

\section{Introduction}

 Let $\left( \Omega ,\mathcal{F}, \mathbf{F}=(\cF_t)_{t\ge 0},\bP\right) $ be a complete filtered
probability space satisfying the usual conditions of right-continuity and completeness.  Let $(w_{t}^{\varrho})_{\rho \ge 1}$,  $t\ge 0$, $\varrho\in\bN$, be a sequence of  independent one-dimensional $\mathbf{F}$-adapted Wiener processes.  For a $(Z,\mathcal{Z},\pi)$ is a
sigma-finite measure space, we let  $p(dt,dz)$  be an $\mathbf{F}$-adapted Poisson random measure on $(\bR_+
\times Z,\mathcal{B}(\bR_+)\otimes \mathcal{Z})$ with intensity measure $\pi(dz)dt$ and denote by
$
q(dt,dz )=p(dt,dz)-\pi(dz)dt
$
the compensated Poisson random measure.
For each real number $T>0$, we let  $\mathcal{R}_T$  and  $\mathcal{P}%
_T $ be the $\mathbf{F}$-progressive and $\mathbf{F}$-predictable
sigma-algebra on $\Omega\times [0,T] $, respectively.

Fix a real number $T>0$ and an integer $d\geq 1$. For each stopping time $\tau\le T$, consider the stochastic flow $X_t=X_{t}(\tau,x)$, $(t,x)\in [0,T]\times \mathbf{R}^d$,  generated by the stochastic differential equation (SDE) 
\begin{align}  \label{eq:SDEIntro}
dX_{t}&=b_t(X_{t})dt+\sigma^{\varrho}_t(X_{t})dw^{\varrho}_{t}+\int_{Z}H_t(X_{t-},z )q(dt,dz), \;\tau <t\le T,\notag\\
X_{t} &=x,\; t \leq \tau,
\end{align}
where  $b_t(x)=(b^{i}_t(\omega,x))_{1\leq i\leq d}$ and $\sigma_t (x)=(\sigma_t ^{i\varrho }(\omega,x)
_{1\leq i\leq d,\rho\ge 1}$ are $\mathcal{R}_T\otimes \mathcal{B}(%
\mathbf{R}^{d})$-measurable random fields defined on $\Omega \times \lbrack
0,T]\times \mathbf{R}^{d}$ and $H_t(x,z)=(H^{i}_t(\omega,x,z))_{1\leq i\leq d}$ is
a $\mathcal{P}_{T}\otimes \mathcal{B}(\mathbf{R}^{d})\otimes \mathcal{Z}$%
-measurable random fields defined on $\Omega \times [ 0,T]\times \mathbf{R}%
^{d}\times Z.$  The summation convention with respect to the repeated index  $\varrho\in \bN$ is used here and below.  In this paper, under natural regularity assumptions on the coefficients $b$, $\sigma$, and $H$,  we provide a simple and  direct derivation of moment estimates  of  the space inverse of the flow, denoted $X_t^{-1}(\tau,x)$,  in weighted H{\"o}lder norms by  applying  the Sobolev embedding theorem and the change of variable formula. Using a similar method, we  establish a strong limit theorem in weighted H{\"o}lder norms  for a sequence of flows $X_t^{(n)}(\tau,x)$ and their inverses $X_t^{(n);-1}(\tau,x)$ corresponding to  a sequence of coefficients $(b^{(n)},\sigma^{(n)},H^{(n)})$ converging in an appropriate sense.  Furthermore, as an application of the diffeomorphism property of flow,  we give a direct  derivation of the linear second order degenerate stochastic partial differential equation  (SPDE) governing the inverse flow $X_t^{-1}(\tau,x)$ when $H\equiv 0$. Specifically, for each $\tau\le T$, consider the stochastic flow  $Y_t=Y_t(\tau,x)$,
 $(t,x) \in [0,T]\times \mathbf{R}^d$, generated by the SDE
 \begin{align*}
 dY_{t} &=b_t(Y_t)dt+\sigma^{\varrho}_t (Y_{t}) dw^{\varrho}_{t}, \;\;\tau <t\le T,\\
 Y_{t} &=x,\;\;t\leq \tau.\notag
 \end{align*}
Assume that $b$ and $\sigma$ have  linear growth,  bounded first and second derivatives, and that  the second derivatives of $b$ and $\sigma$ are  $\alpha$-H{\"o}lder for some $\alpha>0$.  By partitioning the time interval  and using Taylor's theorem, the Sobolev embedding theorem, and some basic  properties of the flow and its inverse, we show that $u_t(x)=u_t(\tau,x):=Y_t^{-1}(\tau,x)$, $(t,x)\in [0,T]\times \mathbf{R}^d$ is the unique classical solution of the  SPDE  given by
\begin{align}\label{eq:SPDEIntro}
du_t (x)&=\left(\frac{1}{2}\sigma
^{i\varrho}_t(x) \sigma^{j\varrho}_t(x)\partial_{ij}u_t(x)-\hat b^i_t(x)\partial_iu_t(x)\right)d t -\sigma^{i\varrho}_t(x) \partial_iu_{t}(x)dw^{\varrho}_t,\;\;\tau <t\le T,\notag \\
u_t(x) &=x,\;\;t\leq \tau,
\end{align}
where 
$$
\hat b^i_t(x)=b^i_t(x)-\sigma^{j\varrho}_t(x) \partial_j\sigma^{i\varrho}_t(x).$$
In \cite{LeMi14}, we use all of the properties of the flow $X_t(\tau,x)$ that are established in this  work in order to derive  the existence and uniqueness  of classical solutions of linear parabolic stochastic integro-differential equations (SIDEs).\\
\indent One of the earliest works to investigate  the homeomorphism property of  flows of SDEs  with jumps is by P. Meyer in \cite{Me81a}.  In \cite{Mi83}, R. Mikulevi\v{c}ius extended the properties found in \cite{Me81a} to SDEs driven by arbitrary continuous martingales and random measures.   Many other authors have since expanded upon the work in \cite{Me81a}, see for example  \cite{FuKu85, Ku04,Me07, QiZh08,  Zh13b, Pr14} and references therein.  In \cite{Ku04,Ku86a}, H. Kunita studied the diffeomorphism property  of the flow $X_t(s,x),$ $(s,t,x)\in [0,T]^2\times\mathbf{R}^d$, and  in the setting of  deterministic coefficients,  he showed that for each fixed $t$, the  inverse  flow $X_t^{-1}(s,x),$ $(s,x)\in [t,T]\times\mathbf{R}^d$, solves a backward SDE.  By estimating the associated backward SDE,  one can obtain moment estimates  and a strong limit theorem for the inverse flow in essentially the same way that moment estimates are obtained for the direct flow (see, e.g. \cite{Ku86a}).  However, this method of deriving moment estimates and a strong limit theorem for the inverse flow uses a time reversal, and thus requires that the coefficients are deterministic.  In the case $H\equiv 0$, numerous authors have investigated properties of  the inverse flow with random coefficients.  In Chapter 2 of \cite{Bi81}, Lemma 2.1 and 2.2. of \cite{OcPa89},  and Section 6.1 and 6.2 of  \cite{Ku96}, the authors  derive properties of $Y^{-1}_t(\tau,x)$  (e.g. moment estimates, strong limit theorem, and the fact that it solves \eqref{eq:SPDEIntro})   by first  showing that it  solves the Stratonovich form SDE for $Z_t=Z_t(\tau,x)$, $(t,x)\in [0,T]\times \mathbf{R}^d$, given by
\begin{align}\label{eq:invflowsde}
dZ_t(x)&=- U_t(Z_t(x)) b_t(x)dt- U_t(Z_t(x))\sigma ^{\varrho}_t(x)\circ dw^{\varrho}_t,\;\;\tau <t\le T,\\
Z_0(x)&=x, \;\;\tau <t,\notag
\end{align}
where  $U_t(x)=U_t(\tau,x)=[\nabla Y_t(\tau,x)]^{-1}$.   In order to obtain a strong solution to \eqref{eq:invflowsde}, the authors impose conditions on the coefficients that guarantee  $\nabla U_t(x)$ is   locally-Lipschitz in $x$. In the degenerate setting,  the third derivative of   $b_t$ and $\sigma_t$  need to be $\alpha$-H{\"o}lder for some $\alpha>0$  to obtain that $\nabla U_t(x)$ is  locally-Lipschitz in $x$. However, for some reason, the authors assume more regularity than this.  In this paper, we derive properties of the inverse flow under those assumptions which guarantee that $Y_t(\tau,x)$ is a $\cC^{\beta}_{loc}$-diffeomorphism (and with $\beta>1$).

Classical solutions of \eqref{eq:SPDEIntro} have been constructed in \cite{Bi81,Ku96} by directly showing that  $Y^{-1}_t(\tau,x)$ solves \eqref{eq:invflowsde}. As we have mentioned above, this approach requires  the third derivatives of $b_t$ and $\sigma_t$  to be $\alpha$-H{\"o}lder for some $\alpha>0$. Yet another approach to deriving existence of classical solutions of  \eqref{eq:SPDEIntro} is using the method of time reversal (see, e.g. \cite{Ku96,DaTu98}). While this method  only requires that  the  second derivatives of $b_t$ and $\sigma_t$  are $\alpha$-H{\"o}lder for some $\alpha>0$, it does impose that the coefficients are deterministic.  In  \cite{KrRo82a},  N.V. Krylov and B.L. Rozvskii derived the existence and uniqueness of  generalized solutions of degenerate second order linear parabolic SPDEs  in Sobolev spaces  using variational approach of SPDEs and the method of vanishing viscosity (see, also, \cite{GeGyKr14} and Ch. 4, Sec. 2, Theorem 1 in \cite{Ro90}).  Thus, by appealing to the Sobolev embedding theorem, this  theory can be used to obtain classical solutions of degenerate linear SPDEs.  Proposition 1 of Ch. 5, Sec. 2 ,in \cite{Ro90} shows that if $\sigma$ is uniformly bounded and four-times continuously differentiable  in $x$ with uniformly bounded derivatives and $b$ is uniformly bounded and  three-times continuously differentiable with uniformly bounded derivatives, then there exists a classical solution of \eqref{eq:SPDEIntro} and $u_t(x)=Y_t^{-1}(x)$. This  is more regularity than we require.  

This paper is organized as follows. In Section 2, we state our  notation and the main results.  Section 3 is devoted to the proof of the properties of the stochastic flow  $X_t(\tau,x)$ and Section 4 to the proof that $Y^{-1}_t(\tau,x)$ is the unique  classical  solution of \eqref{eq:SPDEIntro}. In Section 5, the appendix,
auxiliary facts that are used throughout the paper are discussed.

\section{Outline of main results}

For each integer $n\ge 1$, let $\mathbf{R}^{n}$  be the $n$%
-dimensional Euclidean space and for each $x\in\mathbf{R}^{n}$, denote by $|x|$ the Euclidean norm of $x$. Let $\bR_+$ denote the set of non-negative real-numbers. Let $\bN$ be the set of natural numbers. 
Elements of $\mathbf{R}^d$ are  understood as column vectors  and elements of $\mathbf{R}^{2d}$ are understood as matrices of dimension $d\times d$.  We  denote the transpose of an element  $x\in\mathbf{R}^d$ by $x^*$. The norm of an element $x$ of  $\ell_2(\mathbf{R}^d)$ (resp. $\ell_2(\mathbf{R}^{2d})$), the space of square-summable $\mathbf{R}^d$-valued (resp. $\mathbf{R}^{2d}$-valued) sequences,  is  also denoted by $|x|$. For a topological space $(X,\cX)$ we denote the Borel sigma-field on $X$ by $\cB(X)$.

For each $i\in \{1,\ldots,d_1\}$, let $\partial_i=\frac{\partial}{\partial x_i}$ be
the spatial derivative operator with respect to $x_i$ and write  $\partial_{ij}=\partial_i\partial_j$ for each $i,j\in \{1,\ldots,d_1\}$.  For a once differentiable function $%
f=(f^1\ldots,f^{d_1}):\bR^{d_1}\rightarrow\bR^{d_1}$, we denote the gradient  of $f$ by $\nabla
f=(\partial_jf^i)_{1\le i,j\le d_1}$.  Similarly, for a once differentiable function $f=(f^{1\varrho},\ldots,f^{d\varrho})_{\varrho\ge 1} : \bR^{d_1}\rightarrow \ell_2(\bR^{d_1})$, we denote the gradient of $f$ by $\nabla f=(\partial_jf^{i\varrho})_{1\le i,j\le d_1,\varrho\ge 1} $ and understand it as a function from $\bR^{d_1}$ to $\ell_2(\mathbf{R}^{2d_1})$.
For a multi-index $%
\gamma=(\gamma_1,\ldots,\gamma_d)\in\{0,1,2,\ldots,\}^{d_1}$ of length $%
|\gamma|:=\gamma_1+\cdots+\gamma_d$, denote by $\partial^{\gamma}$ the
operator $\partial^\gamma=\partial_1^{\gamma_1}\cdots \partial_d^{\gamma_d}$, where $\partial_i^0$ is the identity operator for all $i\in\{1,\ldots,d_1\}$. For each integer $d\ge 1$, we denote by $C_c^{\infty}(\bR^{d_1}; \bR^{d})$ the space of infinitely differentiable functions with compact support in $\bR^{d}$. 

\indent For a Banach space $V$ with norm $|\cdot |_{V}$, domain $Q$ of $%
\mathbf{R}^{d}$, and continuous function $f:Q\rightarrow V$,  we define 
$$
|f|_{0;Q;V}=\sup_{x\in Q}|f(x)|
$$
and
$$
[f]_{\beta;Q;V}=\sup_{x,y\in Q,x\neq y}\frac{|
f(x)-f(y)|_{V}}{|x-y|_{V}^{\beta }},\;\;\beta \in (0,1].$$
For each real number $\beta\in \mathbf{R}$, we write  $\beta =[\beta]^-+\{\beta\}^+$, and $\{\beta\}^+\in (0,1]$.   For a Banach space $V$ with norm $|\cdot |_{V}$,  real number $\beta>0$, and domain $Q$ of $%
\mathbf{R}^{d}$, we denote by $\cC^{\beta }(Q;V)$ the Banach space of
all bounded continuous functions $f:Q\rightarrow V$ having finite norm 
\begin{equation*}
|f|_{\beta ;Q;V}:=\sum_{| \gamma |\leq [\beta ]^-
}|\partial^{\gamma }f|_{0;Q;V}+\sum_{|\gamma|=[\beta]^-}[\partial^{\gamma}f]_{\{\beta\}^+ ;Q;V}.
\end{equation*}%
When $Q=\mathbf{R}^{d}$ and $V=\mathbf{R}^n$ or $V=\ell_2(\bR^n)$ for any integer $n\ge 1$, we drop the subscripts $Q$  and $V$ from the norm  $| \cdot |_{\beta;Q;V}$ and write $|\cdot |_{\beta}
$.   For a Banach space $V$ and for each $\beta>0$, denote by  $%
\cC_{loc}^{\beta}(\bR^d;V)$   the Fr\'echet space   of   continuous functions   $f:\mathbf{R}^d\rightarrow V$ satisfying $f\in \cC^{\beta}(Q;V)$ for all
bounded domains $Q\subset \mathbf{R}^{d}$. We call a function $f:\mathbf{R}%
^{d}\rightarrow \mathbf{R}^{d} $ a $\cC_{loc}^{\beta}(\bR^d;\bR^d)$-diffeomorphism  if $f$
is a homeomorphism and both $f$ and its inverse $f^{-1}$ are in $\cC_{loc}^{\beta}(\bR^d;\bR^d)$.\\
\indent For a Fr\'echet space $\chi$, we denote by $D([0,T];\chi)$ the space of $\chi$%
-valued c\`{a}dl\`{a}g functions on $[0,T]$ and by $C([0,T]^{2};\chi)$ the
space of $\chi$-valued continuous functions on $[0,T]\times \lbrack 0,T]$.  The spaces   $D([0,T];\chi)$  and $C([0,T]^{2};\chi )$ are endowed with the supremum semi-norms.\\  
\indent The notation $N=N(\cdot ,\cdots,\cdot )$ is used to denote a positive
constant depending only on the quantities appearing in the parentheses. In a
given context, the same letter is often used to denote different constants
depending on the same parameter. If we do not specify to which space  the parameters $%
\omega ,t,x,y,z$ and $n$ belong, then we  mean $\omega \in \Omega $, $%
t\in [ 0,T]$, $x,y\in \mathbf{R}^{d}$, $z\in Z$, and $n\in\mathbf{N}$.\\

\indent Let  $r_1(x)=\sqrt{1+|x|^2},$ $x\in\mathbf{R}^d$. For each real number $\beta > 1$, we introduce the following regularity condition on the coefficients $b,\sigma, $ and $H$.

\begin{assumption} [$\beta$]\label{asm:regularitypropflow}

\begin{tightenumerate}
 \item  There is a constant $N_{0}>0$ such
that for all $(\omega,t,z)\in \Omega\times [0,T]\times Z$,
$$
|r_1^{-1}b_t|_0+|\nabla b_t|_{\beta -1}+|r_1^{-1}\sigma_t|_0+|\nabla \sigma_t|_{\beta -1}\leq
N_{0}\quad \textit{and}\quad| r_1^{-1}H_t(z )|_{0}+|\nabla H_t(z )|_{\beta -1}\leq K_{t}(z
),
$$
where $K :\Omega \times[ 0,T]\times Z\rightarrow \mathbf{R}_+$ is a  $\mathcal{P}_{T}\otimes \mathcal{Z}$-measurable function satisfying
$$
K_{t}(z)+\int_{Z}K_t(z)^{2}\pi (dz)\leq
N_{0},
$$
for all
$(\omega,t,z)\in \Omega\times [0,T]\times Z$.
\item There are constants $\eta\in (0,1)$  and $N_{\kappa}>0$ such that for all $(\omega ,t,x, z)\in \{(\omega ,t,x,z)\in \Omega \times
[ 0,T]\times \mathbf{R}^d\times Z:|\nabla H_{t}(\omega ,x,z)|>\eta \},$ 
$$
|\left( I_{d}+\nabla
H_t(x,z)\right) ^{-1}|\leq N_{\kappa}.
$$
\end{tightenumerate}
\end{assumption}

The following theorem shows that if  Assumption \ref{asm:regularitypropflow} $(\beta)$  holds for some $\beta>1$, then for any $\beta'\in [1,\beta]$, the solution   $X_t(\tau,x)$ of \eqref{eq:SDEIntro} has a modification that  is a $\cC_{loc}^{\beta'}(\bR^d\bR^d)$\allowbreak-diffeomorphism  and  the $p$-th moments of the weighted $\beta '$-H{\"o}lder norms of the inverse flow are bounded. This theorem will be proved in the next section.

\begin{theorem}\label{thm:diffeoandmomest} Let  Assumption \ref{asm:regularitypropflow}$(\beta)$  hold for some $\beta>1$.
\begin{tightenumerate}
\item  For each stopping time $\tau\le T$ and $\beta'\in [1,\beta)$, there exists a modification of the strong solution $X_t(\tau,x)$ of \eqref{eq:SDEIntro}, also denoted by $X_t(\tau,x)$, such that  $\bP$-a.s.\ the mapping $X_{t}(\tau,\cdot )\allowbreak:\mathbf{R}%
^{d}\rightarrow \mathbf{R}^{d}$ is a $\cC_{loc}^{\beta'}(\bR^d;\bR^d)$%
-diffeomorphism,  $X_{\cdot}(\tau,\cdot),X^{-1}_{\cdot}(\tau,\cdot)\in D([0,T];\cC_{loc}^{\beta'}(\bR^d;\bR^d))$, and $X_{t-}^{-1}(\tau,\cdot )$ coincides
with the inverse of $X_{t-}(\tau,\cdot)$. Moreover, for each $\epsilon >0$ and $p\ge 2$, there is a constant $N=N(d,p,N_{0},T,\beta',\epsilon)$
such that  
\begin{equation}\label{eq:MomEstDirect}
\bE\left[\sup_{t\leq T}|r_{1}^{-(1+\epsilon )}X_{t}(\tau )|_{0}^{p}\right]+\bE\left[\sup_{t\leq T}|r_{1}^{-\epsilon }\nabla X_{t}(\tau )|_{\beta'-1}^{p}\right]\leq N
\end{equation}
and  a constant $N=N(d,p,N_{0},T,\beta',\eta ,N_{\kappa},\epsilon)$ such that
\begin{equation}\label{ineq:MomEstInverse}
\bE\left[\sup_{t\leq T}|r_{1}^{-(1+\epsilon )}X^{-1}_{t}(\tau )|_{0}^{p}\right]+\bE\left[\sup_{t\leq T}|r_{1}^{-\epsilon }\nabla X^{-1}_{t}(\tau )|_{\beta'-1}^{p}\right]\leq N.
\end{equation}
\item If $H\equiv 0$, then for each  $\beta'\in (1,\beta)$, $\bP$-a.s.\ $X_{\cdot}(\cdot ,\cdot),X^{-1}_{\cdot}(\cdot,\cdot)\in C([0,T]^2;\cC_{loc}^{\beta'}(\bR^d;\bR^d))$ and  for each $\epsilon >0$ and $p\ge 2$, there is a constant $N=N(d,p,N_{0},T,\beta',\epsilon)$
such that  
\begin{equation}\label{ineq:MomEstDirectcts}
\bE\left[\sup_{s,t\leq T}|r_{1}^{-(1+\epsilon )}X_{t}(s )|_{0}^{p}\right]+\bE\left[\sup_{s,t\leq T}|r_{1}^{-\epsilon }\nabla X_{t}(s )|_{\beta'-1}^{p}\right]\leq N
\end{equation}
and
\begin{equation}\label{ineq:MomEstInversects}
\bE\left[\sup_{s,t\leq T}|r_{1}^{-(1+\epsilon )}X^{-1}_{t}(s )|_{0}^{p}\right]+\bE\left[\sup_{s,t\leq T}|r_{1}^{-\epsilon }\nabla X^{-1}_{t}(s )|_{\beta'-1}^{p}\right]\leq N.
\end{equation}
\end{tightenumerate}
\end{theorem}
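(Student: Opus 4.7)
The plan is to combine three ingredients: standard $L^{p}$-moment estimates for the flow $X_{t}(\tau,x)$ and its spatial derivatives, obtained by applying It\^o's formula, the Burkholder-Davis-Gundy (BDG) inequality to both the Wiener and the compensated Poisson integrals, and Gronwall's lemma; the Sobolev embedding theorem to promote $L^{p}$-bounds on derivatives into weighted H\"older bounds; and the change of variable formula to transfer all such estimates from $X_{t}(\tau,\cdot)$ to its inverse, thereby avoiding any backward SDE and any time-reversal argument so that the coefficients may remain random and adapted. The loss $\beta'<\beta$ and the weight $r_{1}^{-\epsilon}$ provide the slack needed to apply the Sobolev embedding.

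I would first establish the pointwise moment estimates
\[
\bE\Big[\sup_{t\le T}|X_{t}(\tau,x)|^{p}\Big]\le N\,r_{1}(x)^{p},\qquad
\bE\Big[\sup_{t\le T}|\partial^{\gamma}X_{t}(\tau,x)|^{p}\Big]\le N,\quad 1\le|\gamma|\le[\beta']^{-},
\]
by applying BDG and Gronwall to the SDEs formally satisfied by $X_{t}(\tau,x)$ and its derivatives, whose (linear) coefficients are controlled by $|\nabla b_{t}|_{\beta-1}$, $|\nabla\sigma_{t}|_{\beta-1}$ and $|\nabla H_{t}(z)|_{\beta-1}$ through Assumption~\ref{asm:regularitypropflow}(1); Fa\`{a} di Bruno expresses higher derivatives in terms of the first-order ones and the coefficients. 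A Kolmogorov estimate of $\bE|\partial^{\gamma}X_{t}(\tau,x)-\partial^{\gamma}X_{t}(\tau,y)|^{p}$ in $x$ then produces the $\{\beta'\}^{+}$-H\"older seminorm (the strict inequality $\beta'<\beta$ providing the room), and the Kolmogorov-Chentsov criterion in $t$ yields a c\`{a}dl\`{a}g modification with values in $\cC_{loc}^{\beta'}(\bR^{d};\bR^{d})$. Patching the pointwise estimates over a dyadic decomposition of $\bR^{d}$ and summing a geometric series in $r_{1}^{-\epsilon}$ delivers the weighted bound \eqref{eq:MomEstDirect}.

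The central step is the analysis of the inverse. Injectivity of $X_{t}(\tau,\cdot)$ is obtained by applying It\^o's formula to $|X_{t}(\tau,x)-X_{t}(\tau,y)|^{-p}$ in the spirit of Kunita, with Assumption~\ref{asm:regularitypropflow}(2) guaranteeing that jumps cannot collapse distinct points; surjectivity and properness follow from \eqref{eq:MomEstDirect}, yielding the homeomorphism property. Differentiating $X_{t}(\tau,X_{t}^{-1}(\tau,y))=y$ gives $\nabla X_{t}^{-1}(\tau,y)=[\nabla X_{t}(\tau,X_{t}^{-1}(\tau,y))]^{-1}$, and $L^{p}$-bounds on $[\nabla X_{t}(\tau,x)]^{-1}$ are obtained by an It\^o expansion of the inverse Jacobian (again invoking Assumption~\ref{asm:regularitypropflow}(2) for the jump term). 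Via Fa\`{a} di Bruno, each $\partial^{\gamma}X_{t}^{-1}$ equals a universal polynomial $P_{\gamma}$ in $(\nabla X_{t})^{-1}$ and $\partial^{\delta}X_{t}$ for $|\delta|\le|\gamma|$, composed with $X_{t}^{-1}$. Rather than estimating $\partial^{\gamma}X_{t}^{-1}(\tau,y)$ pointwise, I would perform the change of variable $y=X_{t}(\tau,x)$ on balls $B_{R}\subset\bR^{d}$:
\[
\int_{B_{R}}|\partial^{\gamma}X_{t}^{-1}(\tau,y)|^{p}\,dy
=\int_{X_{t}^{-1}(\tau,B_{R})}|P_{\gamma}(\tau,x)|^{p}\det\nabla X_{t}(\tau,x)\,dx,
\]
so that the right-hand side is controlled in $L^{q}(\bP)$ for any $q\ge 2$ by the direct-flow estimates. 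Choosing $p$ large enough and invoking the Sobolev embedding $W^{[\beta']^{-}+1,p}(B_{R})\hookrightarrow \cC^{\beta'}(B_{R})$ converts these integral bounds into the desired H\"older bounds, and a weighted dyadic sum produces \eqref{ineq:MomEstInverse}.

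For part (2), the absence of jumps upgrades the c\`{a}dl\`{a}g modification to a continuous one in $t$, and the flow composition $X_{t}(s,x)=X_{t}(u,X_{u}(s,x))$ for $s\le u\le t$ combined with BDG-Gronwall estimates of the increments in the initial time $s$ provides the two-parameter Kolmogorov continuity criterion in $(s,t,x)$; \eqref{ineq:MomEstDirectcts} and \eqref{ineq:MomEstInversects} then follow by exactly the same Sobolev plus change-of-variable scheme. The principal obstacle throughout is the jump case at the Sobolev-embedding level: one must produce $L^{p}$-bounds for the derivatives of $X_{t}^{-1}$ that are uniform in $t$ despite the c\`{a}dl\`{a}g discontinuities, and simultaneously verify that $X_{t-}^{-1}(\tau,\cdot)$ agrees with the pointwise inverse of $X_{t-}(\tau,\cdot)$. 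This is where the quantitative invertibility bound in Assumption~\ref{asm:regularitypropflow}(2) on the set $\{|\nabla H_{t}(z)|>\eta\}$ is essential, controlling the potentially large values of $\nabla H_{t}$ through the uniform size of the inverse of $I_{d}+\nabla H_{t}(x,z)$.
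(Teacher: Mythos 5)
Your proposal follows essentially the same strategy as the paper: Kunita-style $L^p$ moment estimates for the direct flow and for the inverse Jacobian $[\nabla X_t]^{-1}$ via its own It\^o SDE, the Fa\`a di Bruno representation of $\partial^\gamma X_t^{-1}$ in terms of $[\nabla X_t]^{-1}$ and lower-order derivatives of $X_t$ composed with $X_t^{-1}$, the change of variable $y=X_t(\tau,x)$ with Jacobian $J_t=|\det\nabla X_t|$, and a Sobolev-type embedding to convert $L^p$ bounds into weighted H\"older bounds, all without reversing time. The paper additionally cites Kunita's Theorem 3.5 for the homeomorphism property and the flow estimates, and proves the higher-order case by induction; your description of these parts is accurate.

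There is, however, one genuine gap in the inverse-flow step. You invoke the integer-order Morrey--Sobolev embedding
$W^{[\beta']^{-}+1,p}(B_R)\hookrightarrow \cC^{\beta'}(B_R)$,
which (a) requires $L^p$ control on derivatives of order $[\beta']^{-}+1$, one more than Assumption~\ref{asm:regularitypropflow}$(\beta)$ makes available whenever $[\beta']^{-}=[\beta]^{-}$ (e.g.\ $\beta=1.5$, $\beta'=1.4$, so $[\beta']^{-}+1=2>1=[\beta]^{-}$), and (b) fails outright when $\{\beta'\}^{+}=1$, since Morrey's inequality always loses $d/p>0$ in the H\"older exponent, so $W^{[\beta']^{-}+1,p}$ never embeds into $\cC^{[\beta']^{-}+1}$. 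The paper avoids both problems by using the fractional (Slobodeckij/Kolmogorov) embedding of Lemma \ref{lem:Kembedding} and Corollary \ref{cor:SobolevFull}: one bounds
\[
\int\!\!\int_{|x-y|<1}\frac{|r_1^{-\epsilon}(x)\partial^\gamma X_t^{-1}(x)-r_1^{-\epsilon}(y)\partial^\gamma X_t^{-1}(y)|^p}{|x-y|^{2d+\delta p}}\,dx\,dy
\]
directly after the change of variable, using only the $L^p$ moments and the $\{\beta\}^{+}$-H\"older moment differences of the $[\beta]^{-}$-th derivatives, and the slack $\beta'<\beta$ makes the kernel integrable. You in fact use exactly this trick for the direct flow (``A Kolmogorov estimate \ldots produces the $\{\beta'\}^{+}$-H\"older seminorm''), but revert to the integer-order embedding for the inverse. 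Replacing $W^{[\beta']^{-}+1,p}$ by the Slobodeckij seminorm $[\,\cdot\,]_{\{\beta'\}^{+},p}$ on the top-order derivatives, exactly as for the direct flow, closes the gap and recovers the paper's argument. The remaining difference --- you localize to balls $B_R$ and sum dyadically in $R$ while the paper works globally on $\bR^d$ with the weight $r_1^{-\epsilon}$ --- is a minor packaging choice and either presentation works.
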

\begin{remark}
The estimate \eqref{ineq:MomEstInverse} is used in \cite{LeMi14} to  take the optional projection of a linear transformation of the inverse flow of a jump SDE driven by two independent Weiner processes and two independent Poisson random measures relative to the filtration generated by one of the Weiner processes and Poisson random measures.
\end{remark}

Now, let us  state our strong limit theorem for a sequence of flows, which will also be proved in the next section. We will use this strong limit theorem in \cite{LeMi14} to show that the inverse flow of  a jump SDE solves a parabolic stochastic integro-differential equation. For each $n$, consider the
 stochastic flow $X^{(n)}_t=X^{(n)}_t(\tau, x)$, $(t,x)\in[0,T]\times \mathbf{R}^d$, generated by the SDE
\begin{align*}
dX^{(n)}_t&=b^{(n)}_t(X^{(n)}_t)dt+\sigma^{(n)l\varrho}_t
(X ^{(n)}_t)dw^{\varrho}_{t}+\int_{Z}H^{(n)}_t(X^{(n)}_{t-},z )q(dt,dz ),\;\;\tau \leq
t\leq T, \\
X^{(n)}_t &=x,\;\;t\leq \tau.
\end{align*}
Here we assume that for each $n$, $b^{(n)}$, $\sigma^{(n)}$, and $H^{(n)}$  satisfy the same measurability conditions as $b,\sigma,$ and $H$, respectively.

\begin{theorem}\label{thm:stronglimit}Let  Assumption \ref{asm:regularitypropflow}$(\beta)$  hold for some $\beta>1$ and assume that  $b^{(n)}, \sigma^{(n)}$, and $H^{(n)}$  satisfy  Assumption \ref{asm:regularitypropflow} $(\beta)$  uniformly in $n\in \bN$. Moreover, assume that 
$$
d\bP dt-\lim_{n\rightarrow\infty}\left (|r_1^{-1}b^{(n)}_t- r_1^{-1}b_t|_{0}+|\nabla b^{(n)}_t-\nabla b_t|_{\beta-1}\right)=0,
$$
$$
d\bP dt-\lim_{n\rightarrow\infty}\left (|r_1^{-1}\sigma^{(n)}_t-r_1^{-1}\sigma_t|_{\beta-1}+|\nabla \sigma^{(n)}_t-\nabla \sigma _t|_{0}\right)=0,
$$
and  for all  $(\omega,t,z)\in \Omega\times [0,T]\times Z$ and $n\in \bN $,
$$
|r_1^{-1}H^{(n)}_t(z)-r_1^{-1}H_t(z)|_{0}+|\nabla H^{(n)}_t(z)-\nabla H_t(z)|_{\beta-1}\le K^{(n)}(t,z),
$$
where $(K^{(n)}_t(z))_{n\in\mathbf{N}}$ is a sequence of  $\mathbf{R}_{+}$-valued $\mathcal{P}_{T}\otimes \mathcal{Z}$ measurable functions
defined on $\Omega \times [ 0,T]\times Z$ satisfying for all $(\omega,t,z)\in \Omega\times [0,T]\times Z$ and $n\in \bN$,
$$
K^{(n)}_t(z)+\int_Z K_t^{(n)}(z)^2\pi(dz)\le N_0
$$
and 
$$
d\bP dt-\lim_{n\rightarrow\infty} \int_{Z} K_t^{(n)}(z)^2\pi(dz)=0.
$$ Then for each stopping time $\tau\le T$,  $\beta'\in [1,\beta)$,  $\epsilon>0,$ and $p\ge2,$  we have
\begin{gather*}
\lim_{n\rightarrow \infty }\left(\bE\left[\sup_{t\leq T} | r_1^{-(1+\epsilon)}X^{(n)}_t(\tau )-r_1^{-(1+\epsilon)}X_t(\tau)|_{0}^{p}\right]+\bE\left[\sup_{t\leq T} |
r_1^{-\epsilon}\nabla X^{(n)}_t (\tau) -r_1^{-\epsilon}\nabla X_{t}(\tau ) |
_{\beta'-1}^{p}\right]\right)=0,\\
\lim_{n\rightarrow \infty }\bE\left[\sup_{t\leq T} | r_1^{-(1+\epsilon)}X^{(n);-1}_t(\tau )-r_1^{-(1+\epsilon)}X^{-1}_t(\tau)|_{0}^{p}\right]=0,
\end{gather*}
and
$$
\lim_{n\rightarrow \infty }\bE\left[\sup_{t\leq T} |
r_1^{-\epsilon}\nabla X^{(n);-1}_t (\tau) -r_1^{-\epsilon}\nabla X^{-1}_{t}(\tau ) |
_{\beta'-1}^{p}\right]=0.
$$
\end{theorem}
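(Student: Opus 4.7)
The plan is to first establish the convergence of the direct flows $X^{(n)}_t(\tau,\cdot)\to X_t(\tau,\cdot)$ in the weighted $\cC^{\beta'}_{loc}$-norms stated in the theorem, and then deduce the convergence of the inverse flows from that of the direct flows together with the uniform diffeomorphism estimates already supplied by Theorem \ref{thm:diffeoandmomest} applied to both $X$ and each $X^{(n)}$ (the latter being uniform in $n$ by the assumption that Assumption \ref{asm:regularitypropflow}$(\beta)$ holds uniformly in $n$).

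For the direct-flow statement, I would work pointwise in $x$ and argue as in Theorem \ref{thm:diffeoandmomest}. Writing $\Delta^{(n)}_t(x)=X^{(n)}_t(\tau,x)-X_t(\tau,x)$, the difference $\Delta^{(n)}_t(x)$ satisfies an SDE whose drift, Brownian, and Poisson coefficients split into a "Lipschitz" part (differences of $b_t,\sigma_t,H_t$ evaluated at $X^{(n)}_t$ and $X_t$, bounded by $|\Delta^{(n)}|$ times the Lipschitz norms from Assumption \ref{asm:regularitypropflow}$(\beta)$) and a "forcing" part (the differences $(b^{(n)}-b)_t(X_t)$, $(\sigma^{(n)}-\sigma)_t(X_t)$, $(H^{(n)}-H)_t(X_t,z)$). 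Applying It\^o's formula to $r_1(x)^{-p(1+\epsilon)}|\Delta^{(n)}_t(x)|^p$, using Burkholder--Davis--Gundy for the martingale terms, integrating $\pi(dz)$ for the Poisson compensator, and invoking Gronwall's inequality yields a pointwise bound
$$
\bE\sup_{t\le T}r_1(x)^{-p(1+\epsilon)}|\Delta^{(n)}_t(x)|^p \le N\,\varepsilon_n(x),
$$
where $\varepsilon_n(x)\to 0$ $d\bP\,dt$-a.s.\ and is uniformly bounded by a power of $r_1(x)$ via the hypotheses on $(b^{(n)},\sigma^{(n)},H^{(n)})$. Differentiating \eqref{eq:SDEIntro} up to order $\lceil\beta\rceil$ and iterating the same estimate yields analogous pointwise bounds for derivatives of $\Delta^{(n)}_t(x)$ up to order $[\beta]^-+1$.

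To convert these pointwise moment bounds into the H\"older norm estimates claimed in the theorem, I would apply Sobolev embedding (Morrey's inequality) on each bounded domain, taking $p$ sufficiently large relative to $d$ and $\beta-\beta'$: choosing $\beta'<\beta$ leaves room to trade regularity for integrability, so that the uniform integrability in $x$ provided by the weight $r_1^{-\epsilon}$ together with the pointwise moment decay yields
$$
\bE\sup_{t\le T}|r_1^{-(1+\epsilon)}\Delta^{(n)}_t|_0^p + \bE\sup_{t\le T}|r_1^{-\epsilon}\nabla\Delta^{(n)}_t|_{\beta'-1}^p \to 0
$$
by dominated convergence (the integrand tends to $0$ $\omega$-a.s.\ and is dominated by the uniform moments \eqref{eq:MomEstDirect} for $\beta''\in(\beta',\beta)$). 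This is the step where I expect the main technical work to lie, since both the weight and the Poisson-integral terms have to be handled carefully to get a clean domination.

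For the inverse-flow statement, set $Y^{(n)}_t(x):=X^{(n);-1}_t(\tau,x)$ and $Y_t(x):=X^{-1}_t(\tau,x)$. From $X^{(n)}_t(\tau,Y^{(n)}_t(x))=x=X_t(\tau,Y_t(x))$ and the mean value theorem one obtains
$$
Y^{(n)}_t(x)-Y_t(x) = \bigl[\nabla X^{(n)}_t(\tau,\zeta^{(n)}_t(x))\bigr]^{-1}\bigl(X_t(\tau,Y_t(x))-X^{(n)}_t(\tau,Y_t(x))\bigr)
$$
for some intermediate point $\zeta^{(n)}_t(x)$; the already-established uniform-in-$n$ moment estimates \eqref{eq:MomEstDirect}--\eqref{ineq:MomEstInverse} for $\nabla X^{(n)}$ and its inverse, combined with the weighted convergence of $\Delta^{(n)}$ above, yield the $C^0$-convergence statement. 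For the gradient part I would use the identity $\nabla Y^{(n)}_t(x)=[\nabla X^{(n)}_t(\tau,Y^{(n)}_t(x))]^{-1}$, and its analogue for $\nabla Y_t$, expressing
$$
\nabla Y^{(n)}_t(x)-\nabla Y_t(x) = \nabla Y^{(n)}_t(x)\bigl(\nabla X_t(\tau,Y_t(x))-\nabla X^{(n)}_t(\tau,Y^{(n)}_t(x))\bigr)\nabla Y_t(x);
$$
splitting the inner difference as $\nabla X_t(\tau,Y_t(x))-\nabla X^{(n)}_t(\tau,Y_t(x))+\nabla X^{(n)}_t(\tau,Y_t(x))-\nabla X^{(n)}_t(\tau,Y^{(n)}_t(x))$, applying the H\"older continuity of $\nabla X^{(n)}_t$ (with $\cC^{\beta''-1}$-norms uniformly bounded by \eqref{eq:MomEstDirect}) and the $C^0$-convergence of $Y^{(n)}_t$ to $Y_t$, and again invoking Sobolev embedding on a slightly higher regularity index $\beta''\in(\beta',\beta)$, then yields the final weighted $\cC^{\beta'-1}$-convergence of $\nabla X^{(n);-1}$.
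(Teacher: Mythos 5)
The direct-flow half of your argument is essentially the paper's: a Gronwall/BDG pointwise moment estimate for $Z^{(n)}_t(x)=X^{(n)}_t(\tau,x)-X_t(\tau,x)$, companion difference estimates in $x$, Sobolev embedding, and an interpolation inequality. (One small correction: under Assumption 2.1$(\beta)$ with $\beta\in(1,2)$, $\nabla b_t$ is only H\"older-$(\beta-1)$, so you cannot differentiate the SDE to order $[\beta]^-+1$; the paper stops at order $[\beta]^-$ and relies on the H\"older modulus \eqref{ineq:GradientMomentdiff} for the last step.)

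The genuine gap is in your treatment of the $C^0$-convergence of the inverse flows. Your identity
$$
Y^{(n)}_t(x)-Y_t(x)=\bigl[\nabla X^{(n)}_t(\tau,\zeta^{(n)}_t(x))\bigr]^{-1}\bigl(X_t(\tau,Y_t(x))-X^{(n)}_t(\tau,Y_t(x))\bigr)
$$
invokes a vector-valued mean value theorem that does not hold: for $f:\bR^d\to\bR^d$ one has $f(a)-f(b)=\bigl(\int_0^1\nabla f(b+\theta(a-b))\,d\theta\bigr)(a-b)$, and there is in general no single point $\zeta$ with $\nabla f(\zeta)$ equal to that averaged matrix. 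Replacing $\nabla X^{(n)}_t(\tau,\zeta^{(n)}_t(x))$ by the averaged matrix $M^{(n)}_t(x)=\int_0^1\nabla X^{(n)}_t(\tau,Y_t(x)+\theta(Y^{(n)}_t(x)-Y_t(x)))\,d\theta$ does not save the argument, because an average of invertible matrices need not be invertible and the moment estimate \eqref{ineq:momentestgradinvbd} controls $[\nabla X^{(n)}_t(\cdot)]^{-1}$ at fixed points, not the inverse of such averages. The paper sidesteps this entirely: instead of any quantitative MVT identity it extracts a sub-subsequence along which both $X^{(n_k)}\to X$ uniformly on compacts (in the $\cC^{\beta'}_{loc}$ sense) and $\sup_t|r_1(X^{(n_k)}_t(x))^{-1}-r_1(X_t(x))^{-1}|_0\to 0$ almost surely; the latter forces the candidate inverse points $X^{(n_k);-1}_{t_{n_k}}(x_{n_k})$ to stay in a compact set, and then a short contradiction argument using the homeomorphism property of the limit flow and the c\`adl\`ag structure in $t$ (left vs right limits must be treated separately because of jumps, a point your proposal does not address) proves the pointwise convergence in probability. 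Only afterward is the weighted H\"older convergence upgraded via the uniform moment bounds, Corollary \ref{cor:SobEmbLimit}, and interpolation. Your chain-rule decomposition for $\nabla X^{(n);-1}-\nabla X^{-1}$ is sound and is essentially what the paper does after establishing the $C^0$-convergence, but as written it rests on the invalid MVT step.
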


Let us  introduce our class of solutions for the equation \eqref{eq:SDEIntro}.
For a each number $\beta' >2$, let $\mathfrak{C}_{cts}^{\beta'}(\bR^d;\bR^d)$  be the linear space of  all random fields $v:\Omega\times [0,T]\times \mathbf{R}^d\rightarrow \mathbf{R}^d$ such that  $v$ is $\cO_T\otimes\cB(\mathbf{R}^d)$-measurable and $\bP$-a.s.\ $r_{1}^{-\lambda}(\cdot)v_{\cdot}(\cdot)$ is a $C([0,T];\cC^{\beta'}(\bR^d;\bR^d))$ for a real  number $\lambda>0$.

We introduce the following assumption for a real number $\beta >2$.
\begin{assumption}[$\beta$]\label{asm:propflowregwcorrec}
There is a constant $N_0$ such
that for all $(\omega,t)\in \Omega\times [0,T]$,
$$
|r_1^{-1}b_t|_0+|r_1^{-1}\sigma_t|_0+
|\nabla b_t|_{\beta -1}+| \nabla \sigma_t|_{\beta -1}\leq N_{0}.
$$
\end{assumption}

\begin{theorem}\label{thm:SPDEEx}Let Assumption  \ref{asm:propflowregwcorrec}$(\beta)$ hold for some $\beta >2$.  Then for each stopping time $\tau\le T$ and  $\beta'\in [1,\beta)$, there exists a unique process $u(\tau)$ in $\mathfrak{C}_{cts}^{\beta'}(\bR^d;\bR^d)$ that solves  \eqref{eq:SPDEIntro}. Moreover, $\bP$-a.s.\ $u_t(\tau,x)=Y^{-1}_t(\tau,x)$ for all $(t,x)\in [0,T]\times\mathbf{R}^d$ and for each $\epsilon>0$ and $p\geq 2,$ there is  a
constant $N=N(d,p,N_{0},T,\beta',\epsilon)$ such that 
\begin{equation*}
\bE\left[\sup_{s,t\leq T}|r_1^{-(1+\epsilon)}u_t(s)| _{0}^{p}\right]+\bE\left[\sup_{s,t\leq T}|r_1^{-\epsilon}\nabla u_t(s)| _{\beta'-1}^{p}\right]\leq N.
\end{equation*}
\end{theorem}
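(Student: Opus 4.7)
The plan is to verify that $u_t(\tau,x):=Y_t^{-1}(\tau,x)$ is a classical solution of \eqref{eq:SPDEIntro} by time discretization and Taylor expansion, and to use the Kunita--It\^o--Wentzell formula to derive uniqueness in the class $\mathfrak{C}_{cts}^{\beta'}(\bR^d;\bR^d)$. Membership of $u(\tau)$ in this class and the asserted weighted moment estimate follow immediately from Theorem \ref{thm:diffeoandmomest}(2) applied with $H\equiv 0$, because Assumption \ref{asm:propflowregwcorrec}$(\beta)$ implies Assumption \ref{asm:regularitypropflow}$(\beta)$ in the continuous case; in particular $Y^{-1}\in C([0,T]^2;\cC_{loc}^{\beta'}(\bR^d;\bR^d))$ $\bP$-a.s., and \eqref{ineq:MomEstInversects} supplies the required bound on $|r_1^{-\lambda}u_t|_{\beta'}$ once $\lambda$ is chosen large enough.

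For existence, I fix $\tau$, $x$, and a partition $\tau=t_0<t_1<\cdots<t_n=t$ of $[\tau,t]$, and exploit the flow identity $u_{t_{k+1}}(\tau,\cdot)=u_{t_k}(\tau,\cdot)\circ Y_{t_{k+1}}^{-1}(t_k,\cdot)$ to write $u_t(\tau,x)-x$ as a telescoping sum of the increments $u_{t_k}(\tau,Y_{t_{k+1}}^{-1}(t_k,x))-u_{t_k}(\tau,x)$. Setting $\psi_k(x):=Y_{t_{k+1}}(t_k,x)-x=\int_{t_k}^{t_{k+1}}b_r(Y_r(t_k,x))\,dr+\int_{t_k}^{t_{k+1}}\sigma^{\varrho}_r(Y_r(t_k,x))\,dw^{\varrho}_r$ and $\phi_k(x):=Y_{t_{k+1}}^{-1}(t_k,x)-x$, and inverting the relation $(x+\phi_k(x))+\psi_k(x+\phi_k(x))=x$ by iterated Taylor expansion, I obtain $\phi_k(x)=-\psi_k(x)+\nabla\psi_k(x)\psi_k(x)+r^{(1)}_k(x)$, with $r^{(1)}_k(x)$ of cubic order in $\psi_k$. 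Combining this with a second-order Taylor expansion of $u_{t_k}(\tau,\cdot)$ about $x$ yields
$$u_{t_{k+1}}(\tau,x)-u_{t_k}(\tau,x)=-\nabla u_{t_k}(\tau,x)\psi_k(x)+\nabla u_{t_k}(\tau,x)\nabla\psi_k(x)\psi_k(x)+\tfrac{1}{2}\nabla^2 u_{t_k}(\tau,x)(\psi_k(x))^{\otimes 2}+r^{(2)}_k(x).$$
As the mesh tends to zero, the linear sum converges to $-\int_\tau^t\nabla u_s(\tau,x)b_s(x)\,ds-\int_\tau^t\nabla u_s(\tau,x)\sigma^{\varrho}_s(x)\,dw^{\varrho}_s$, while the two quadratic terms, through the quadratic variation identities $d\langle w^{\varrho},w^{\varrho'}\rangle_t=\delta_{\varrho\varrho'}\,dt$, assemble into the drift $\int_\tau^t\bigl[\sigma^{j\varrho}_s(x)\partial_j\sigma^{i\varrho}_s(x)\partial_iu_s(\tau,x)+\tfrac{1}{2}\sigma^{i\varrho}_s(x)\sigma^{j\varrho}_s(x)\partial_{ij}u_s(\tau,x)\bigr]\,ds$. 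The identity $\sigma^{j\varrho}\partial_j\sigma^{i\varrho}=b^i-\hat b^i$ then identifies the limit with \eqref{eq:SPDEIntro}.

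For uniqueness, let $u$ be any element of $\mathfrak{C}_{cts}^{\beta'}(\bR^d;\bR^d)$ solving \eqref{eq:SPDEIntro}. Because $\beta'>2$, $u$ has two continuous spatial derivatives whose second derivatives are H\"older continuous with appropriate weighted bounds, so the Kunita--It\^o--Wentzell formula applies to the composition $u_t(\tau,Y_t(\tau,x))$. Substituting the It\^o SDE for $Y_t(\tau,\cdot)$ and \eqref{eq:SPDEIntro} for $u$, and using the definition of $\hat b$, one verifies that the finite-variation and martingale parts of $d[u_t(\tau,Y_t(\tau,x))]$ cancel identically; this process is therefore constant on $[\tau,T]$ and equals its initial value $x$. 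Since $Y_t(\tau,\cdot)$ is a $\cC^{\beta'}_{loc}$-diffeomorphism by Theorem \ref{thm:diffeoandmomest}, this forces $u_t(\tau,\cdot)=Y_t^{-1}(\tau,\cdot)$.

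The main obstacle is to control, uniformly in the spatial variable and the partition mesh, the Taylor remainders $r^{(1)}_k$, $r^{(2)}_k$ together with the error introduced by freezing $b,\sigma,\nabla\sigma$ at the left endpoint $t_k$, measured in the weighted H\"older norm required by the definition of $\mathfrak{C}_{cts}^{\beta'}(\bR^d;\bR^d)$. Here the Sobolev embedding theorem is essential: it allows one to reduce convergence in the weighted H\"older norm to convergence in a sufficiently high Sobolev norm, which is in turn estimated via the Burkholder--Davis--Gundy inequality applied to $\psi_k$ and $\nabla\psi_k$, combined with the a priori bounds \eqref{ineq:MomEstDirectcts}--\eqref{ineq:MomEstInversects} and the regularity of $b,\sigma$ encoded in Assumption \ref{asm:propflowregwcorrec}$(\beta)$.
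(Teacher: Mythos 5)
Your proposal is correct and follows the same overall scheme as the paper (time discretization of $[\tau,t]$, Taylor expansion, passage to the limit via BDG/Sobolev embedding, and It\^o--Wentzell for uniqueness), but the decomposition of the telescoping increment is genuinely different. The paper writes $u_{t_{n+1}}(x)-u_{t_n}(x)=A_n+C_n+D_n$ where $A_n$ comes from Taylor-expanding $u_{t_n}(Y_{t_n}(y))-u_{t_n}(Y_{t_{n+1}}(y))$ at $y=Y^{-1}_{t_n}(x)$ — so $A_n$ is already expressed in terms of the forward increment $x-Y_{t_{n+1}}(t_n,x)=-\psi_n(x)$ with no inversion — while $B_n=C_n+D_n$ corrects for having Taylor-expanded at the shifted point $Y_{t_{n+1}}(t_n,x)$ rather than $x$, with $C_n=(\nabla u_{t_{n+1}}(x)-\nabla u_{t_n}(x))(-\psi_n)$ then handled via the chain-rule identity $\nabla u_{t_{n+1}}=\nabla u_{t_n}\circ Y^{-1}_{t_{n+1}}(t_n,\cdot)\cdot\nabla Y^{-1}_{t_{n+1}}(t_n,\cdot)$ and the SDE for $\nabla Y^{-1}$. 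You instead expand $u_{t_k}$ directly about $x$ with the \emph{inverse} increment $\phi_k=Y^{-1}_{t_{k+1}}(t_k,x)-x$, and recover the It\^o-integral structure by Taylor-inverting $\phi_k=-\psi_k+\nabla\psi_k\,\psi_k+r^{(1)}_k$ from the identity $(x+\phi_k)+\psi_k(x+\phi_k)=x$. Both decompositions yield the same limit, but in a differently distributed way: in the paper the $\tfrac12\sigma^{i\varrho}\sigma^{j\varrho}\partial_{ij}u$ drift arises as a net cancellation between $A_n$ (contributing $-\tfrac12$) and $C_n$ (contributing $+1$), whereas in your scheme it comes directly from the single quadratic term $\tfrac12\nabla^2u_{t_k}(x)\psi_k^{\otimes 2}$. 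Your route is conceptually cleaner for identifying the It\^o correction, at the cost of the extra implicit-function Taylor inversion step; that step is sound because the matrix $I+\int_0^1\nabla\psi_k(x+\theta\phi_k)\,d\theta$ has an $\omega$-wise bounded inverse (diffeomorphism property), which gives an exact Neumann-type representation $\phi_k=-(I+A_k)^{-1}\psi_k$ whose remainder beyond the first two terms is $O(|\nabla\psi_k|^2|\psi_k|+|\nabla^2\psi_k||\psi_k|^2)=O(M^{-3/2})$ per subinterval, hence vanishes after summation exactly as you claim. One small imprecision: the class $\mathfrak{C}^{\beta'}_{cts}$ is only defined for $\beta'>2$, so both the uniqueness step and the interpretation of the class membership should be read with $\beta'\in(2,\beta)$ and the weighted H\"older estimate for the whole range $\beta'\in[1,\beta)$ derived separately from \eqref{ineq:MomEstInversects}, which you do correctly.
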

\begin{remark}\label{rem:sigmazero}
It is  clear by the proof of this theorem that if $\sigma\equiv 0$, then we only need to assume that  Assumption  \ref{asm:propflowregwcorrec} $(\beta)$  holds for some $\beta >1$.
\end{remark}

Now, consider the  SPDE  given by
\begin{align}\label{eq:SPDE}
d\bar u_t (x)&=\left(\frac{1}{2}\sigma
^{i\varrho}_t(x) \sigma^{j\varrho}_t(x)\partial_{ij}\bar u_t(x)+ b^i_t(x)\partial_i\bar u_t(x)\right)d t +\sigma^{i\varrho}_t(x) \partial_i\bar u_{t}(x)dw^{\varrho}_t,\;\;\tau <t\le T,\notag \\
\bar u_t(x) &=x,\;\;t\leq \tau.
\end{align}

This SPDE differs from the one given in \eqref{eq:SPDEIntro} by the first-order coefficient in the drift. In order to obtain an existence and uniqueness theorem for this equation, we have to impose  additional assumptions on $\sigma$.

We introduce the following assumption for a real number $\beta >2$.

\begin{assumption}[$\beta$]\label{asm:propflowregwocorrec}
There is a constant $N_0>0$ such
that for all $(\omega,t)\in \Omega\times [0,T]$,
$$
|r_1^{-1}b_t|_0+
|\nabla b_t|_{\beta -1}+|  \sigma_t|_{\beta+1 }\leq N_{0}.
$$ 
\end{assumption}
For each $\tau\le T$, consider the stochastic flow  $\hat Y_t=\hat Y_t(\tau,x)$,
 $(t,x) \in [0,T]\times \mathbf{R}^d$, generated by the SDE
 \begin{align} 
 d\bar Y_{t} &=-\hat b_t(\bar Y_t)dt-\sigma^{\varrho}_t (\bar Y_{t}) dw^{\varrho}_{t}, \;\;\tau <t\le T,\\
 Y_{t} &=x,\;\;t\leq \tau.\notag
 \end{align}
If Assumption \ref{asm:propflowregwocorrec}($\beta$) holds for some $\beta>2$, then for all $(\omega,t,x)\in \Omega\times [0,T]\times \bR^d$,
 $$
 |\hat b_t(x)|\le |b_t(x)|+|\sigma_t(x)|\nabla \sigma_t(x)|\le N_0(N_0+1)+N_0|x|
 $$
 and
 $$
 |\nabla \hat b_t|_{\beta-1}\le |\nabla  b_t|_{\beta-1}+| \sigma_t|_{\beta-1}   |\nabla^2 \sigma_t|_{\beta-1}+|\nabla \sigma_t|_{\beta-1}^2\le N_0+2N_0^2,
 $$
 which immediately implies the following corollary of Theorem \ref{thm:SPDEEx}.

\begin{corollary}\label{cor:SPDEExredform}Let Assumption \ref{asm:propflowregwocorrec}$(\beta)$  hold for some $\beta >2$. Then for each stopping time $\tau\le T$ and  $\beta'\in [1,\beta)$, there exists a unique process $\bar u(\tau)$ in $\mathfrak{C}_{cts}^{\beta'}(\bR^d;\bR^d)$ that solves  \eqref{eq:SPDE}. Moreover, $\bP$-a.s.\ $\bar u_t(\tau,x)=\bar Y^{-1}_t(\tau,x)$ for all $(t,x)\in [0,T]\times\mathbf{R}^d$ and for each $\epsilon>0$ and $p\geq 2,$ there is  a
constant $N=N(d,p,N_{0},T,\beta',\epsilon)$ such that 
\begin{equation*}
\bE\left[\sup_{s,t\leq T}|r_1^{-(1+\epsilon)}\bar u_t(s)| _{0}^{p}\right]+\bE\left[\sup_{s,t\leq T}|r_1^{-\epsilon}\nabla \bar u_t(s)| _{\beta'-1}^{p}\right]\leq N.
\end{equation*}
\end{corollary}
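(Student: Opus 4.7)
The corollary is essentially a direct consequence of Theorem \ref{thm:SPDEEx} applied to the SDE for $\bar Y_t(\tau,x)$, together with a short algebraic check identifying \eqref{eq:SPDE} as an instance of \eqref{eq:SPDEIntro} with transformed coefficients. The idea is to introduce new coefficients
$$
\tilde b_t := -\hat b_t, \qquad \tilde\sigma^\varrho_t := -\sigma^\varrho_t,
$$
so that the flow $\bar Y_t(\tau,x)$ in the corollary is precisely the flow of type $Y$ in Theorem \ref{thm:SPDEEx} with $(b,\sigma)$ replaced by $(\tilde b,\tilde\sigma)$. With this substitution, the Stratonovich-to-It\^o correction becomes
$$
\widetilde{\hat b}{}^{i}_t = \tilde b^{i}_t - \tilde\sigma^{j\varrho}_t\partial_j\tilde\sigma^{i\varrho}_t = -\hat b^{i}_t - \sigma^{j\varrho}_t\partial_j\sigma^{i\varrho}_t = -b^{i}_t,
$$
using the definition $\hat b^{i}_t = b^{i}_t - \sigma^{j\varrho}_t\partial_j\sigma^{i\varrho}_t$. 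Plugging $\tilde b$, $\tilde\sigma$, and $\widetilde{\hat b}$ into \eqref{eq:SPDEIntro} and noting that the second-order coefficient $\tilde\sigma\tilde\sigma$ equals $\sigma\sigma$, one recovers exactly the SPDE \eqref{eq:SPDE}.

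Next I would verify that $(\tilde b,\tilde\sigma)$ satisfies Assumption \ref{asm:propflowregwcorrec}$(\beta)$ with a constant $N_0'$ depending only on $N_0$. The bounds
$$
|\hat b_t(x)| \le N_0(N_0+1)+N_0|x|, \qquad |\nabla \hat b_t|_{\beta-1}\le N_0+2N_0^2,
$$
displayed in the paragraph preceding the corollary, give $|r_1^{-1}\tilde b_t|_0 \le N_0(N_0+1)+N_0$ and $|\nabla\tilde b_t|_{\beta-1}\le N_0+2N_0^2$. For $\tilde\sigma$, the inequality $|\sigma_t|_{\beta+1}\le N_0$ from Assumption \ref{asm:propflowregwocorrec}$(\beta)$ implies $|r_1^{-1}\tilde\sigma_t|_0\le|\sigma_t|_0\le N_0$ and $|\nabla\tilde\sigma_t|_{\beta-1}\le|\sigma_t|_{\beta+1}\le N_0$. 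Hence Assumption \ref{asm:propflowregwcorrec}$(\beta)$ holds for $(\tilde b,\tilde\sigma)$ with constant $N_0'=N_0'(N_0)$.

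Finally, invoking Theorem \ref{thm:SPDEEx} for the coefficients $(\tilde b,\tilde\sigma)$ yields, for each stopping time $\tau\le T$ and each $\beta'\in[1,\beta)$, a unique process $\bar u(\tau)\in\mathfrak{C}^{\beta'}_{cts}(\bR^d;\bR^d)$ solving \eqref{eq:SPDEIntro} with these coefficients, which by the computation above is \eqref{eq:SPDE}; the identification $\bar u_t(\tau,x)=\bar Y_t^{-1}(\tau,x)$ on a set of full probability for all $(t,x)\in[0,T]\times\bR^d$; and the moment estimate with a constant depending on $(d,p,N_0',T,\beta',\epsilon)$, hence on $(d,p,N_0,T,\beta',\epsilon)$. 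Uniqueness is inherited verbatim from Theorem \ref{thm:SPDEEx}.

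The proof is essentially bookkeeping; there is no real obstacle. The only mildly delicate point is verifying that the quadratic-in-$N_0$ bounds on $\hat b$ and its derivatives imply the correct form of Assumption \ref{asm:propflowregwcorrec}$(\beta)$, and that $|\sigma_t|_{\beta+1}\le N_0$ is enough to control both $|r_1^{-1}\sigma_t|_0$ and $|\nabla\sigma_t|_{\beta-1}$, which is immediate from the definition of the H\"older norm.
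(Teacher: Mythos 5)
Your proposal is correct and follows essentially the same route as the paper: reduce Corollary~\ref{cor:SPDEExredform} to Theorem~\ref{thm:SPDEEx} by applying it to the flow $\bar Y$ with the transformed coefficients $(\tilde b,\tilde\sigma)=(-\hat b,-\sigma)$. The paper states only the coefficient bounds on $\hat b$ and says the corollary is "immediate," leaving the algebraic identification of \eqref{eq:SPDE} as \eqref{eq:SPDEIntro} with drift $\widetilde{\hat b}=-b$ implicit; you have spelled out that bookkeeping, correctly, and also verified Assumption~\ref{asm:propflowregwcorrec}$(\beta)$ for $(\tilde b,\tilde\sigma)$, so no gap remains.
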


\section{Properties of stochastic flows}

\subsection{Homeomorphism property of  flows}

In this subsection, we collect some results about flows of jump SDEs that we will need. In particular, we present sufficient conditions that guarantee the homeomorphism property  of flows of jump SDEs. First, let us introduce the following assumption, which is the usual linear growth and Lipschitz  condition on the coefficients $b,\sigma$, and $H$ of the SDE \eqref{eq:SDEIntro}.

\begin{assumption}
\label{asm:lineargrowthlipschitz}There is a constant $N_{0}>0$ such that for all $(\omega ,t,x,y)\in \Omega\times[0,T]\times \bR^{2d}$,
\begin{align*}
|b_t(x)|+|\sigma_t (x)| &\leq N_{0}(1+|x|), \\
|b_t(x)-b_t(y)|+|\sigma_t (x)-\sigma_t(y)| &\leq
N_{0}|x-y|.
\end{align*}%
Moreover, for all $(\omega ,t,x,y,z)\in \Omega\times[0,T]\times  \bR^{2d}\times Z,$
\begin{align*}
|H_t(x,z)|&\leq
K_{1}(t,z)(1+|x|),\\
|H_t(x,z)-H_t(y,z)| &\leq K_{2}(t,z)|
x-y|, 
\end{align*}%
where $K_1,K_2: \Omega \times[ 0,T]\times Z\rightarrow \mathbf{R}_+$ \textit{are}  $\mathcal{P}_{T}\otimes \mathcal{Z}$-measurable functions satisfying 
\begin{equation*}
K_{1}(t,z)+K_2(t,z)+\int_{Z}\left(K_{1}(t,z)^{2}+K_2(t,z)^2\right)\pi (dz)\leq
N_{0},
\end{equation*}
for all $(\omega,t,z)\in \Omega\times [0,T]\times Z$.
\end{assumption} 

It is well-known that under this assumption  that there exists a unique strong solution $X_t(s,x)$ of \eqref{eq:SDEIntro} (see e.g. Theorem 3.1 in \cite{Ku04}). We will also make use of the following assumption.

\begin{assumption}\label{asm:Hdiffeoasm}
For all $(\omega,t,x,z)\in \Omega\times [0,T]\times \bR^d\times Z$, $H_t(x,z) $ is differentiable in $x$, and there are constants $\eta\in (0,1)$  and $N_{\kappa}>0$  such that for all $(\omega ,t,x, z)\in$ $ \{(\omega ,t,x,z)\in \Omega \times
[ 0,T]\times \mathbf{R}^d\times Z$ $:|\nabla H_{t}$ $(\omega ,x,z)|>\eta \},$  
\begin{equation*}
\left\vert\left( I_{d}+\nabla
H_t(x,z)\right) ^{-1}\right\vert \leq N_{\kappa}.
\end{equation*}
\end{assumption}
The coming lemma shows that under  Assumptions \ref{asm:lineargrowthlipschitz} and \ref{asm:Hdiffeoasm},  the mapping $x+H_t(x,z)$ from $\mathbf{R}^d$ to $\mathbf{R}^d$ is a diffeomorphism and the gradient of inverse map is bounded. 

\begin{lemma}
\label{lem:Hprop}  Let  Assumptions \ref{asm:lineargrowthlipschitz} and \ref{asm:Hdiffeoasm} hold. For each $(\omega,t,z)\in \Omega\times[0,T]\times Z$, the mapping $\tilde{H}_t(\cdot,z):\mathbf{R}^d\rightarrow\mathbf{R}^d$ defined by $\tilde{H}_t(x,z):=x+H_t(x,z)$ is a 
diffeomorphism and
$$
| \tilde{H}_t^{-1}(x,z)|\leq  \bar N N_0+\bar N|x|\quad \textrm{and} \quad |\nabla \tilde{H}_t^{-1}(x,z)| \leq \bar N,
$$
where $\bar N:=(1-\eta)^{-1}\vee N_0.$
\end{lemma}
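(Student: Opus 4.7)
The plan is to prove that the Jacobian $\nabla\tilde H_t(\cdot,z)=I_d+\nabla H_t(\cdot,z)$ is invertible at every point with inverse uniformly bounded by $\bar N$, upgrade this local invertibility to the global diffeomorphism statement via a Hadamard-type theorem, and then read off both bounds on $\tilde H_t^{-1}$ from the inverse function formula combined with the mean value theorem.

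Fix $(\omega,t,z)$. To bound $|(I_d+\nabla H_t(x,z))^{-1}|$ uniformly in $x\in\mathbf{R}^d$, I would split $\mathbf{R}^d$ according to whether $|\nabla H_t(x,z)|\le\eta$ or $|\nabla H_t(x,z)|>\eta$. On the first set, the Neumann series $\sum_{k\ge 0}(-\nabla H_t(x,z))^k$ converges absolutely to $(I_d+\nabla H_t(x,z))^{-1}$ and gives the bound $(1-\eta)^{-1}$. On the second, Assumption~\ref{asm:Hdiffeoasm} directly gives the bound $N_\kappa$. Hence $|(I_d+\nabla H_t(x,z))^{-1}|\le\bar N$ everywhere, so $\tilde H_t(\cdot,z)$ is a $C^1$ local diffeomorphism by the inverse function theorem. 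To promote this to a global diffeomorphism of $\mathbf{R}^d$, I would invoke the Hadamard--Levy global invertibility theorem: under the uniform bound $\|(\nabla\tilde H_t(x,z))^{-1}\|\le\bar N$ the required integrability condition $\int_0^\infty\sup_{|x|=r}\|(\nabla\tilde H_t(x,z))^{-1}\|^{-1}\,dr=+\infty$ is trivial, and the theorem produces the global diffeomorphism. The gradient estimate for the inverse then follows from
\[
\nabla\tilde H_t^{-1}(y,z)=\bigl[\nabla\tilde H_t\bigl(\tilde H_t^{-1}(y,z),z\bigr)\bigr]^{-1}
\]
together with the pointwise bound just established.

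For the linear growth bound, I would first estimate $|\tilde H_t^{-1}(0,z)|$. Since $\tilde H_t(0,z)=H_t(0,z)$, one has $\tilde H_t^{-1}(H_t(0,z),z)=0$, so the mean value theorem applied to $\tilde H_t^{-1}$ gives
\[
|\tilde H_t^{-1}(0,z)|=\bigl|\tilde H_t^{-1}(0,z)-\tilde H_t^{-1}(H_t(0,z),z)\bigr|\le\bar N\,|H_t(0,z)|\le\bar N N_0,
\]
where in the last step I use $|H_t(0,z)|\le K_1(t,z)\le N_0$ from Assumption~\ref{asm:lineargrowthlipschitz}. A second application of the mean value theorem then yields
\[
|\tilde H_t^{-1}(x,z)|\le|\tilde H_t^{-1}(0,z)|+\bar N|x|\le\bar N N_0+\bar N|x|.
\]
The one genuinely non-trivial step is the passage from local to global invertibility: the naive injectivity argument using the Lipschitz constant $K_2(t,z)$ breaks down because $K_2(t,z)$ need not be strictly less than $1$, and it is only the uniform bound on the inverse Jacobian that is strong enough to force global bijectivity via Hadamard's theorem.
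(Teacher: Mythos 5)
Your proof is correct and follows essentially the same route as the paper: Neumann series plus Assumption~\ref{asm:Hdiffeoasm} to bound $(I_d+\nabla H_t)^{-1}$, Hadamard's theorem (Theorem~0.2 in the cited reference) for global invertibility, and the Lipschitz constant $\bar N$ of $\tilde H_t^{-1}$ combined with $|H_t(0,z)|\le K_1(t,z)\le N_0$ for the linear growth bound. The only cosmetic difference is that you estimate $|\tilde H_t^{-1}(0,z)|$ directly and then propagate via the mean value theorem, whereas the paper derives a lower bound on $|\tilde H_t(x,z)-\tilde H_t(y,z)|$ and substitutes; these are the same argument rearranged.
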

\begin{proof}
(1)  On the set $(\omega ,t,x,z)\in \{(\omega ,t,x,z)\in \Omega \times
[ 0,T]\times\mathbf{R}^d\times  Z:|\nabla H_t(\omega,x,z)|\le \eta \}$, we have
\begin{equation*}
|\kappa_t (\omega,x,z)| \leq \left| I_d+\sum_{n=1}^{\infty }(-1)^{n}[\nabla H_t(\omega,x,z)]^{n}\right|\le \frac{1}{1-\eta}.
\end{equation*}%
It follows from  Assumption \ref{asm:Hdiffeoasm} that  for all $\omega,t,x,$ and $z$, the mapping $\nabla \tilde{H}_t(x,z)$ has a  bounded inverse. Therefore, by Theorem 0.2 in \cite{DeHoIm13}   the mapping $\tilde{H}_t(\cdot,z):\mathbf{R}^{d}\rightarrow \mathbf{R}^{d}$ is a
global diffeomorphism. Moreover,  for all  $\omega,t,x$ and $z$,
$$
| \tilde{H}_t^{-1}(x,z)-\tilde{H}_t^{-1}(y,z)|\le \bar N|x-y|,
$$
which yields
$$
|\tilde{H}_t(x,z)-\tilde{H}_t(y,z)|\ge \bar N^{-1}|x-y| \quad \Longrightarrow \quad  |\tilde{H}_t(x,z)|+K_1(t,z)\ge \bar N^{-1}|x|,
 $$
and hence
$$
 |\tilde{H}_t^{-1}(x,z)|\le \bar NK_1(t,z)+\bar N|x|\le \bar NN_0+\bar N|x|.
 $$
\end{proof}

The following estimates are essential in the proof of the homeomorphic property of the flow and the derivation of moment estimates of the inverse flow.  We refer the reader to  Theorem 3.2 and  Lemmas 3.7 and 3.9 in \cite{Ku04} and Lemma 4.5.6 in \cite{Ku97} ($H\equiv 0$ case)  for the proof of the following lemma.

\begin{lemma}
\label{lem:Direct Flow Estimates}Let  Assumption \ref{asm:lineargrowthlipschitz} hold. 
\begin{tightenumerate}
\item For each $p\geq 2,$ there is  a constant $N=N(p,N_{0},T)$ such that
for all $s,\bar s\in [0,T]$ and $x,y\in \mathbf{R}^d,$ 
\begin{equation}\label{ineq:growthdirectposp}
\bE\left[\sup_{t\leq T}r_{1}(X_{t}(s,x)^{p})\right]\leq Nr_{1}(x)^{p},
\end{equation}
\begin{equation}\label{ineq:estdirectdifftposp}
\bE\left[\sup_{t\leq T}|X_{t}(s,x) -X_{t}(s,y)
|^{p} \right]\leq N|x-y|^{p}.
\end{equation}
\item If  Assumption \ref{asm:Hdiffeoasm} holds, then for each $p\in\mathbf{R}$, there is a
constant $N=N(p,N_{0},T,\eta,N_{\kappa})$ such that for all $s\in[0,T]$ and $%
x,y\in\mathbf{R}^d$, 
\begin{equation}\label{ineq:estdirectgrowthnegp}
\bE\left[\sup_{t\leq T}r_{1}(X_{t}(s,x)^{p}\right]\leq Nr_{1}(x)^{p},
\end{equation}
and 
\begin{equation}\label{ineq:estdirectdiffnegp}
\bE\left[\sup_{t\leq T}|X_{t}(s,x) -X_{t}(s,Y)
|^{p}\right]\leq N|x-y|^{p}.  
\end{equation}
\end{tightenumerate}
\end{lemma}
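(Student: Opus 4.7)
The plan is to apply It\^o's formula to $r_1(X_t(s,x))^p=(1+|X_t(s,x)|^2)^{p/2}$ and to $|X_t(s,x)-X_t(s,y)|^p$, and combine the resulting identities with Burkholder--Davis--Gundy (BDG) and Gronwall's lemma. For part (1) with $p\ge 2$, the linear growth of $b,\sigma$ and the bound $|H_t(x,z)|\le K_1(t,z)r_1(x)$ (together with $K_1(t,z)^2+\int_Z K_1(t,z)^2\pi(dz)\le N_0$ from Assumption \ref{asm:lineargrowthlipschitz}) show that the finite-variation part of $r_1(X_t)^p$ is dominated by $C\int_0^t r_1(X_s)^p\,ds$, while the continuous and compensated-jump martingale parts have predictable quadratic variations of the same order. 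Taking supremum over $t\le T$, applying BDG to the martingale terms, and invoking Gronwall yields \eqref{ineq:growthdirectposp}. For \eqref{ineq:estdirectdifftposp} one repeats the argument on $|X_t(s,x)-X_t(s,y)|^p$, using the Lipschitz conditions on $b,\sigma,H$ in place of linear growth.

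For part (2), with $p\in\bR$ possibly negative, the same scheme works once one has the two-sided bound
\begin{equation*}
c\,r_1(x)\le r_1\bigl(x+H_t(x,z)\bigr)\le C\,r_1(x),
\end{equation*}
uniform in $\omega,t,x$ and $\pi$-a.e.\ $z$, with $c,C$ depending only on $N_0,\eta,N_\kappa$. The upper bound is immediate from the linear growth of $H$; the lower bound, which is what prevents a jump from driving $X_{t-}$ close to the origin and ruining negative moments, is a direct consequence of Lemma \ref{lem:Hprop}: the inequality $|\tilde H_t(x,z)-\tilde H_t(y,z)|\ge \bar N^{-1}|x-y|$ derived in its proof gives $|x+H_t(x,z)|\ge \bar N^{-1}|x|-K_1(t,z)\ge \bar N^{-1}|x|-N_0$, whence the lower bound on $r_1(x+H_t(x,z))$ after absorbing the additive constant. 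Given this, a second-order Taylor expansion of $H\mapsto r_1(x+H)^p$ about $0$ yields
\begin{equation*}
\bigl|r_1(x+H_t(x,z))^p-r_1(x)^p-p\,r_1(x)^{p-2}x\cdot H_t(x,z)\bigr|\le N(p,N_0,\eta,N_\kappa)\,K_1(t,z)^2\,r_1(x)^p,
\end{equation*}
so that, after absorbing the linear term into the Poisson compensator, the net drift from jumps in It\^o's formula is bounded by $C\int_Z K_1(t,z)^2\pi(dz)\,r_1(X_{t-})^p\le CN_0\,r_1(X_{t-})^p$. The remainder of the argument proceeds exactly as in part (1). The difference estimate \eqref{ineq:estdirectdiffnegp} follows along the same lines, applied to $|X_t(s,x)-X_t(s,y)|^p$, using both the Lipschitz condition and the uniform bound on $\nabla\tilde H_t^{-1}$ from Lemma \ref{lem:Hprop} to control $|H_t(x,z)-H_t(y,z)|$ and its comparison with $|x-y|$ when $p<0$.

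The main obstacle is precisely the negative-moment case in part (2): a priori a large jump could carry $X_{t-}$ arbitrarily close to the origin and instantly blow up $r_1(X_t)^p$ for $p<0$. It is Assumption \ref{asm:Hdiffeoasm}, via Lemma \ref{lem:Hprop}, that rules this out by forcing $x\mapsto x+H_t(x,z)$ to be a global diffeomorphism with Lipschitz inverse, ensuring that $|X_t|$ can change across any jump only by a bounded multiplicative factor. Once this geometric fact is in hand, the It\^o+BDG+Gronwall machinery used for $p\ge 2$ extends uniformly to all $p\in\bR$, with the new constants depending additionally on $\eta$ and $N_\kappa$.
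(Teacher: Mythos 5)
The paper does not supply a proof of this lemma; it simply refers the reader to Theorem 3.2 and Lemmas 3.7, 3.9 of \cite{Ku04} and Lemma 4.5.6 of \cite{Ku97}. Your sketch therefore reconstructs the argument rather than diverging from a given one, and the overall strategy is the right one: It\^o's formula applied to $r_1(X_t)^p$ and to $|X_t(x)-X_t(y)|^p$, BDG and Gronwall, with the key observation that for $p<0$ one needs a uniform two-sided comparison $c\,r_1(x)\le r_1(x+H_t(x,z))\le C\,r_1(x)$, and that the lower bound is exactly what Assumption \ref{asm:Hdiffeoasm} buys via Lemma \ref{lem:Hprop}. Your derivation of that lower bound from $|\tilde H_t(x,z)-\tilde H_t(y,z)|\ge \bar N^{-1}|x-y|$ is correct.

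However, there is a genuine gap in how you produce the Taylor remainder bound. A second-order expansion of $H\mapsto r_1(x+H)^p$ about $H=0$ leaves a remainder involving $D^2(r_1^p)(x+\theta H)$, $\theta\in(0,1)$, whose magnitude is of order $r_1(x+\theta H)^{p-2}$. For $p<2$ this is a \emph{decreasing} function of $r_1$, so to dominate it by $r_1(x)^{p-2}$ you need $r_1(x+\theta H)\gtrsim r_1(x)$ for \emph{all} intermediate $\theta$, not just at $\theta=1$. This can fail: the segment $[x,x+H]$ may pass near the origin even when both endpoints are comparable to $r_1(x)$ (e.g.\ in $d=1$, $x=R$, $x+H=-cR$ with $(1+c)\le K_1$; then $x+\theta H=0$ at $\theta=1/(1+c)$). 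The correct fix is to expand $s\mapsto(1+s)^{p/2}$ around $s=0$ rather than $H\mapsto r_1(x+H)^p$ around $H=0$: writing $u=(2x\cdot H+|H|^2)/r_1(x)^2$, one has $r_1(x+H)^p=r_1(x)^p(1+u)^{p/2}$ and
\begin{equation*}
r_1(x+H)^p - r_1(x)^p - p\,r_1(x)^{p-2}x\cdot H
= r_1(x)^p\Bigl[(1+u)^{p/2}-1-\tfrac{p}{2}u\Bigr]+\tfrac{p}{2}\,r_1(x)^{p-2}|H|^2.
\end{equation*}
Here $1+\theta u\ge\min(1,1+u)\ge\min(1,c^2)>0$ for every $\theta\in[0,1]$ (since $1+u=r_1(x+H)^2/r_1(x)^2\ge c^2$), so the integral form of the remainder for $(1+u)^{p/2}$ is bounded by $C(p,c)u^2\le C(p,c,N_0)K_1(t,z)^2$, and the last term is $\le\frac{|p|}{2}K_1(t,z)^2 r_1(x)^p$. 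This gives the claimed bound $\le N K_1(t,z)^2 r_1(x)^p$ using only the endpoint comparison, which is what Lemma \ref{lem:Hprop} actually furnishes. With that correction, and with a matching treatment of the quadratic variation of the jump martingale in BDG (which likewise only needs the endpoint comparison), the It\^o--BDG--Gronwall scheme you describe does close for all $p\in\mathbf{R}$. The same remark applies to the difference estimate \eqref{ineq:estdirectdiffnegp}: the jump relation $|\tilde H_t(x,z)-\tilde H_t(y,z)|\ge\bar N^{-1}|x-y|$ controls the post-jump difference, and the Taylor step should again be organized around the ratio of squared differences rather than around intermediate points of the segment.
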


In the next proposition, we collect some facts  about the homeomorphic property of the flow.  Actually, the homeomorphism property  has been shown in \cite{QiZh08} to hold under the log-Lipschitz condition (i.e. one uses Bihari's inequality instead of Gronwall's inequality), but we do not pursue this here.

\begin{proposition}
\label{prop:homeomorphism}Let  Assumptions \ref{asm:lineargrowthlipschitz} and \ref{asm:Hdiffeoasm} hold.
\begin{tightenumerate}
\item There exists a modification of the strong solution $X_t(s,x),$ $(s,t,x)\in [0,T]^2\times\mathbf{R}^d$, of \eqref{eq:SDEIntro}, also denoted by $X_t(s,x)$, that is c\`{a}dl%
\`{a}g  in $s$ and $t$ and continuous in $x$. 
Moreover, for each stopping time $\tau\le T$,  $\bP$-a.s.\ for all $t\in
[0, T]$, the mappings $X_{t}(\tau ,\cdot),X_{t-}(\tau,\cdot):\mathbf{R}^d\rightarrow\mathbf{R}^d$ are homeomorphisms and the inverse of $%
X_{t}(\tau,\cdot ),$ denoted by $X_t^{-1}(\tau,\cdot )$, is c\`{a}dl%
\`{a}g in $t$ and continuous in $x$, and $X_{t-}^{-1}(\tau,\cdot )$ coincides with
the inverse of $X_{t-}(\tau,\cdot ).$  In particular,  if $(x_{n})_{n\ge 1}$ is a
sequence in $\mathbf{R}^d$ such that $\lim_{n\rightarrow \infty }x_{n}=x$
for some $x\in\mathbf{R}^d$, then $\bP$-a.s.\, 
\begin{equation*}
\lim_{n\rightarrow \infty }\sup_{t\leq T}|X_{t}^{-1}(\tau
,x_{n})-X_{t}^{-1}(\tau,x)|=0.
\end{equation*}
Furthermore,  for each $\beta'\in [0,1)$, $\bP$-a.s.\ $X(\tau,\cdot) \in D([0,T];\cC_{loc}^{\beta'}(\bR^d;\bR^d))$ and  for all  $\epsilon>0$ and  $p\ge 2$, there is a constant $N=N(d,p,N_0,T,\beta',\epsilon)$ such that 
\begin{equation}\label{ineq:estimateofdirect}
\bE\left[\sup_{t\le T} |r_1^{-(1+\epsilon)}X_t(\tau)|_{\beta'}^p\right]\le N.
\end{equation}
\item If $H\equiv 0$, then $\bP$-a.s.\ for all $s,t\in[0,T]$, the $X_{t}(s,x)$ and  $X_{t}^{-1}(s,x)$ are continuous in $s,t,$ and $x.$ Moreover, for each $\beta'\in [0,1]$, $\bP$-a.s.\ $X(\cdot,\cdot) \in C([0,T]^2;\cC_{loc}^{\beta'}(\bR^d;\bR^d))$ and  for each  $\epsilon>0$ and  $p\ge 2$, there is a constant $N=N(d,p,N_0,T,\beta',\epsilon)$ such that 
\begin{equation}\label{ineq:estimateofdirectcts}
\bE\left[\sup_{s,t\le T} |r_1^{-(1+\epsilon)}X_t(s)|_{\beta'}^p\right]\le N.
\end{equation}
\end{tightenumerate}
\end{proposition}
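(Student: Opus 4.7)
My plan is to attack the proposition in three stages: first, construct a spatially continuous modification via Kolmogorov's continuity theorem applied to the moment bounds of Lemma \ref{lem:Direct Flow Estimates}(1); second, promote this modification to a family of homeomorphisms by combining the negative-moment bounds of Lemma \ref{lem:Direct Flow Estimates}(2) with the explicit diffeomorphism property of the jump map given by Lemma \ref{lem:Hprop}; and third, extract the weighted H\"older moment estimates \eqref{ineq:estimateofdirect} and \eqref{ineq:estimateofdirectcts} by a Sobolev embedding argument.

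For the first stage, \eqref{ineq:estdirectdifftposp} with $p$ taken arbitrarily large, combined with Kolmogorov's continuity theorem, yields a modification of $X$ such that $\bP$-a.s.\ $X(\tau,\cdot)\in D([0,T];\cC_{loc}^{\beta'}(\bR^d;\bR^d))$ for every $\beta'\in [0,1)$; this modification remains c\`adl\`ag in $t$ because the original strong solution is. In the case $H\equiv 0$, I augment \eqref{ineq:estdirectdifftposp} with the standard estimate $\bE[\sup_{t}|X_t(s,x)-X_t(\bar s,x)|^p]\le N|s-\bar s|^{p/2}(1+|x|)^p$, obtained from Burkholder--Davis--Gundy applied to the SDE on $[\bar s, s]$, to upgrade to joint continuity of $(s,t,x)\mapsto X_t(s,x)$.

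For the second stage, \eqref{ineq:estdirectdiffnegp} combined with Kolmogorov applied to the random field $(x,y)\mapsto \sup_{t\le T}|X_t(\tau,x)-X_t(\tau,y)|^{-1}|x-y|$ shows that this field is $\bP$-a.s.\ jointly locally bounded, delivering the simultaneous injectivity of $X_t(\tau,\cdot)$ for every $t$. Similarly, \eqref{ineq:estdirectgrowthnegp} together with a continuous-modification argument yields $\inf_{t\le T}|X_t(\tau,x)|\to\infty$ as $|x|\to\infty$ $\bP$-a.s., so $X_t(\tau,\cdot)$ is proper; together with continuity and injectivity, invariance of domain forces it to be a homeomorphism of $\bR^d$. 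For the c\`adl\`ag regularity of the inverse flow and the identification of $X_{t-}^{-1}(\tau,\cdot)$ with the inverse of $X_{t-}(\tau,\cdot)$, I use interlacing at jump times: at each jump time $t$ with jump size $z$, one has $X_t(\tau,\cdot)=(I_d+H_t(\cdot,z))\circ X_{t-}(\tau,\cdot)$, and Lemma \ref{lem:Hprop} guarantees that the prefactor is a global diffeomorphism of $\bR^d$, so composition propagates the homeomorphism property and yields c\`adl\`ag inverses.

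For the third stage, the Sobolev embedding theorem in its fractional form bounds the weighted H\"older norm by
\[
|r_1^{-(1+\epsilon)}X_t(\tau)|_{\beta'}^q \le C\int_{\bR^d}\!\!\int_{\bR^d}\frac{|r_1^{-(1+\epsilon)}(x)X_t(\tau,x)-r_1^{-(1+\epsilon)}(y)X_t(\tau,y)|^q}{|x-y|^{d+q\beta'}}\,dx\,dy + C\int_{\bR^d}|r_1^{-(1+\epsilon)}(x)X_t(\tau,x)|^q\,dx
\]
for $q > d/(1-\beta')$; taking $\sup_t$ and expectation, then applying Fubini, reduces matters to the pointwise bounds \eqref{ineq:growthdirectposp} and \eqref{ineq:estdirectdifftposp}, and the weight $r_1^{-(1+\epsilon)}$ with $\epsilon>0$ guarantees absolute integrability at infinity. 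The main obstacle is the second stage: the Kolmogorov-type arguments applied to $(x,y)\mapsto \sup_t|X_t(\tau,x)-X_t(\tau,y)|^{-1}|x-y|$ and to $x\mapsto \sup_t r_1(X_t(\tau,x))^{-1}$ must be shown to produce modifications that are simultaneously valid for every $t$ in spite of the countably many jumps, and the interlacing of continuous pieces with the jump-map diffeomorphisms from Lemma \ref{lem:Hprop} must be handled carefully so that the composite flow is genuinely a c\`adl\`ag path in the space of homeomorphisms of $\bR^d$. Once those pieces are in place, the first and third stages are largely bookkeeping.
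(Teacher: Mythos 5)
Your overall architecture is sound and largely parallels the paper's: the moment bounds of Lemma \ref{lem:Direct Flow Estimates}(1) feed a Kolmogorov/Sobolev-embedding argument (Corollary \ref{cor:Kolmogorov Embedding}) for the $\cC^{\beta'}_{loc}$-regularity and the weighted estimates, the negative-moment bounds of Lemma \ref{lem:Direct Flow Estimates}(2) plus Lemma \ref{lem:Hprop} are what underlie the homeomorphism property, and the c\`adl\`ag inverse is tied to interlacing the continuous part with the jump maps $\tilde H_t(\cdot,z)$. The difference in your stage two is that you propose to reprove the homeomorphism property (Kolmogorov on $\sup_t|X_t(x)-X_t(y)|^{-1}|x-y|$, properness via $\sup_t r_1(X_t(x))^{-1}$, then invariance of domain and interlacing at jumps), whereas the paper invokes Theorem~3.5 of \cite{Ku04} as a black box — what you are sketching is precisely the content of that theorem, and you rightly flag that making the Kolmogorov argument work simultaneously for all $t$ on the compactified space is the hardest step, so the paper's citation is the more economical route.

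There is, however, a genuine gap in your proposal: the passage from deterministic initial times to stopping-time initial times. Every moment estimate you invoke — \eqref{ineq:growthdirectposp}, \eqref{ineq:estdirectdifftposp}, \eqref{ineq:estdirectgrowthnegp}, \eqref{ineq:estdirectdiffnegp} — is stated for $s,\bar s\in[0,T]$ deterministic, and so is Kunita's homeomorphism theorem; yet from your first stage onward you work directly with $X_t(\tau,x)$ for a general stopping time $\tau$ as if these bounds applied verbatim. They do not a priori. The paper bridges this by constructing the two-parameter version $\bar X_t(s,x)=X_t(0,X_s^{-1}(0,x))$ for $t\ge s$, noting this is c\`adl\`ag in $s$ and $t$ and a version of the equation started at deterministic $s$, then approximating $\tau$ by the discrete stopping times $\tau_n=\sum_k\frac{kT}{n}\mathbf{1}_{\{\frac{(k-1)T}{n}\le\tau<\frac{kT}{n}\}}+T\mathbf{1}_{\{\tau\ge\frac{(n-1)T}{n}\}}$, identifying $\bar X_t(\tau_n,x)$ with $X_t(\tau_n,x)$ via strong uniqueness on each event $\{\tau_n=\frac{kT}{n}\}$, and finally letting $n\to\infty$ using c\`adl\`ag-ness in $s$. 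Without something of this kind your claims about $X_t(\tau,\cdot)$, $X^{-1}_t(\tau,\cdot)$, and the estimates \eqref{ineq:estimateofdirect} remain unjustified. Relatedly, for the $s$-regularity in part (2) your ad hoc Burkholder--Davis--Gundy bound in $|s-\bar s|^{p/2}$ is fine for $H\equiv 0$, but the joint regularity in $(s,t)$ — c\`adl\`ag in $s$ in the general case, continuous in $(s,t)$ when $H\equiv 0$ — drops out of the cocycle definition of $\bar X_t(s,x)$ with essentially no extra work, which is another reason the paper sets up that construction.
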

\begin{proof}
(1) Owing to Assumptions  \ref{asm:lineargrowthlipschitz} and \ref{asm:Hdiffeoasm}, by Lemma \ref{lem:Hprop},  for all $\omega,t$ and $z$, the process $\tilde{H}_t(x,z):=x+H_t(x,z)$  is a homeomorphism  (in fact, it is a diffeomorphism) in $x$ and $\tilde{H}^{-1}_t(x,z)$ has linear growth and is Lipschitz.
This implies that assumptions of Theorem 3.5 in \cite{Ku04} hold and hence there is modification of $X_t(s,x)$, denoted $X_t(s,x)$, such that   for all $s\in [0,T]$, $\bP$-a.s.\ for all $t\in [0,T]$, $X_t(s,\cdot )$ is a homeomorphism. Following \cite{Ku04}, for each $(s,t,x)\in [0,T]^2\times\mathbf{R}^d$, we set
\begin{equation}\label{def:flowdef}
\bar X_t(s,x) =\left\{ \begin{array}{cc}
x & t\le s\\
X_t(0,X_s^{-1}(0,x)) & t\ge s,
\end{array}\right.
\end{equation}
and remark that $\bP$-a.s.\ $\bar X_t(s,x)$ is c\`{a}dl\`{a}g  in $s$ and $t$ and continuous in $x$, and $\bP$-a.s.\   for all $(s,t)\in [0,T]^2$, $\bar X_t(s,\cdot )$ is a homeomorphism, and $\bar X_t(s,x)$ is a version of $X_t(s,x)$ (the equation started at $s$).  Fix a stopping time $\tau\le T$. We will now show that $\bar X_t(\tau,x)=\bar X_t(s,x)|_{s=\tau}$ (i.e. $\bar X_t(s,x)$ evaluated at $s=\tau$) is a version of $X_t(\tau,x)$. Define the sequence of stopping times $(\tau _{n})_{n\ge 1}$
by 
$$\tau _{n}=\sum_{k=1}^{n-1}\frac{kT}{n}\mathbf{1}_{\left\{ \frac{(k-1)T}{n}\le \tau < 
\frac{kT}{n}\right\} }+T\mathbf{1}_{\left\{\tau\ge \frac{(n-1)T}{n}\right\}}.$$  For each $n$ and $x$, let $X_t^{(n)}=X_{t}^{(n)}(x)=\bar X_t(\tau_n,x)$, $t\in [0,T]$. It follows that  for each $n$, $t$, and $x$, $\bP$-a.s.\ for all $k\in \{1,\ldots,n\}$, $$X_t^{(n)}(x)\mathbf{1}_{\{\tau_n=\frac{kT}{n}\}}= X_t\left(\frac{kT}{n},x\right)\mathbf{1}_{\{\tau_n=\frac{kT}{n}\}} ,$$ and hence  
\begin{align*}
X^{(n)}_t(x)\mathbf{1}_{\{\tau_n=\frac{kT}{n}\}}&=\mathbf{1}_{\{\tau_n=\frac{kT}{n}\}}x+\mathbf{1}_{\{\tau_n=\frac{kT}{n}\}}\int_{]\frac{kT}{n},\frac{kT}{n}\vee t]}b_r(X_{r}^{(n)}(x))dr\\
&\quad +\mathbf{1}_{\{\tau_n=\frac{kT}{n}\}}\int_{]\frac{kT}{n},\frac{kT}{n}\vee t]}\sigma^{\varrho}
_r(X_{r}^{(n)}(x))dw^{\varrho}_{r}\\
&+\mathbf{1}_{\{\tau_n=\frac{kT}{n}\}}\int_{]\frac{kT}{n},\frac{kT}{n}\vee t]}\int_Z H
_r(X_{r}^{(n)}(x),z)q(dr,dz).
\end{align*}
Since $\Omega$ is the disjoint union of the sets $\left\{ \tau _{n}=%
\frac{kT}{n}\right\}$, $k\in\{1,\ldots,n\}$,  it follows that $X_{t}^{(n)}(x)$ solves
\begin{align*}
X_{t}^{(n)}(x)&=x+\int_{]\tau _{n},\tau _{n}\vee t]}b_r
(X_{r}^{(n)}(x))dr+\int_{]\tau _{n},\tau _{n}\vee t]}\sigma^{\varrho}_r
(X_{r}^{(n)}(x))dw ^{\varrho}_{r}\\
&\quad +\int_{]\tau _{n},\tau _{n}\vee t]}\int_ZH_r
(X_{r}^{(n)}(x),z)q(dr,dz).
\end{align*}%
Thus, by uniqueness, we have that  for each $t$ and $x$,   $\bP$-a.s.\ $\bar X_{t}(\tau_n,x)=X^{(n)}_t(x)=X_{t}(\tau
_{n},x)$.  It is easy to check that for each $t$ and $x$, $\bP$-a.s.\, $X_t(\tau_n,x)$ converges to $X_t(\tau,x)$ as $n$ tends to infinity. Since $\bar X_t(s,x)$ is c\`{a}dl\`{a}g  in $s$, we have that $\bar X_t(\tau_n,x)$ converges to $\bar X_t(\tau,x)$ as $n$ tends to infinity. Therefore, $\bar X_t(\tau,x)$ is a version of $X_{t}(\tau,x)$ for all $t$ and $x$.  We identify $X_t(s,x)$ and $\bar X_t(s,x)$ for all $(s,t,x)\in [0,T]^2\times \mathbf{R}^d$.  Using  Lemma \ref{lem:Direct Flow Estimates}(1) and Corollary \ref{cor:Kolmogorov Embedding}, we obtain that $\bP$-a.s  $X_{\cdot}(\tau,\cdot) \in D([0,T];\cC_{loc}^{\beta'}(\bR^d;\bR^d))$ and that the estimate \eqref{ineq:estimateofdirect} holds. Note here that for each $\beta \ge 0$, the  Fr\'echet spaces $D([0,T];\cC_{loc}^{\beta}(\bR^d;\bR^d))$ and $\cC_{loc}^{\beta}(\bR^d;\allowbreak D([0,T];\mathbf{R}^{d}))$  are equivalent.  It follows from the proof of Theorem 3.5 in \cite{Ku04} that for every stopping time  $\bar \tau \le T$, $\bP$-a.s.\
\begin{equation}\label{eq:surjective}
\lim_{|x|\rightarrow\infty }\inf_{t\le T}|X_t(\bar \tau,x)|=\infty.
\end{equation}
Let $(t_{n})\subseteq[0,T]$ and $(x_n)\subseteq\mathbf{R}^d$ be convergent
sequences with limits $t$ and $x$, respectively. First, assume $t_{n}<t$ for
all $n$. By \eqref{eq:surjective}, for every stopping time $\bar \tau\le T$,  $\bP$-a.s.\  the sequence $\left(X_{t_{n}}^{-1}(\bar \tau 
,x_{n})\right) $ is uniformly bounded.  Since $\bP$-a.s.\ $X_{\cdot}(\tau,\cdot)\in D([0,T];\cC^{\beta}(\bR^d;\bR^d))$, $\beta'\in (0,1)$,  we have
\begin{gather*}
\lim_{n\rightarrow\infty} \left(X_{t-}(\bar \tau,X_{t_{n}}^{-1}(\bar \tau 
,x_{n}))-X_{t-}(\bar  \tau  ,X_{t-}^{-1}(\bar \tau  ,x) \right)=\lim_{n\rightarrow\infty} \left(X_{t-}(\bar \tau,X_{t_{n}}^{-1}(\bar \tau 
,x_{n}))-x \right)\\
=\lim_{n\rightarrow\infty} \left(X_{t_n}(\bar \tau,X_{t_{n}}^{-1}(\bar \tau 
,x_{n}))-x \right)=\lim_{n\rightarrow\infty} (x_n-x )=0,
\end{gather*}
which implies
$$\lim_{n\rightarrow\infty }X_{t_{n}}^{-1}(\bar \tau ,x_{n})= X_{t-}^{-1}(\bar \tau,x).$$
 A similar argument is used for $t_{n}>t$. 
(2) It follows from the definition \eqref{def:flowdef} that $\bar X_t(s,x)$ and  $\bar X^{-1}_t(s,x)$ are continuous in $s$, $t$, and $x$.  Moreover, applying  Lemma \ref{lem:Direct Flow Estimates}(1) and Corollary \ref{cor:Kolmogorov Embedding}, we get that $\bP$-a.s.\  $X_{\cdot}(\cdot,\cdot) \in C([0,T]^2;\cC_{loc}^{\beta'}(\bR^d;\bR^d))$ and that the estimate \eqref{ineq:estimateofdirectcts} holds. The continuity of $X_s(\tau,x)$ with respect to $s$ actually plays an important role in the proof  of Theorem \ref{thm:SPDEEx}.
\end{proof}

\subsection{Moment estimates of  inverse flows: Proof of Theorem \ref{thm:diffeoandmomest}}

In this subsection, under Assumption \ref{asm:regularitypropflow} ($\beta$), $\beta\ge 1$,  we derive moment estimates    
for the flow $X_t(\tau,x)$ and its inverse $X_t^{-1}(\tau,x)$  in weighted  H\"{o}lder norms and complete the proof of Theorem \ref{thm:diffeoandmomest}. In particular, we will apply Corollaries \ref{cor:SobolevFull} and \ref{cor:Kolmogorov Embedding} with the Banach spaces $V=D([0,T];\mathbf{R}^{d})$ and $V=C([0,T]^{2};\mathbf{R}^{d})$.

\begin{proposition}
\label{p:Regularity of direct flow}Let Assumption \ref{asm:regularitypropflow}$(\beta)$ hold
for some $\beta>1$
\begin{tightenumerate}
\item For each stopping time $\tau\le T$ and $\beta '\in [1,\beta)$,   $\bP$-a.s.\  $\nabla X_{\cdot}(\tau,\cdot )\in D([0,T];\cC_{loc}^{\beta'-1}(\bR^d;\bR^d))$ and for each $\epsilon>0$ and $p\ge 2$, there is a constant $N=N(d,p,N_{0},T,\beta',\epsilon)$  such that 
\begin{equation}  \label{ineq:momentestdirectjmp}
\bE\left[\sup_{t\leq T}| r_1^{-\epsilon}\nabla X_{t}(\tau)| _{\beta'-1}^{p}\right]\le N.
\end{equation}%
Moreover, for each $p\ge 2$, there is a constant $N=N(d,p,N_{0},\beta,T)$  such that for all multi-indices $%
\gamma$ with $1\le |\gamma| \le \left[ \beta \right] $ and all $x\in 
\mathbf{R}^{d}$, 
\begin{equation} \label{ineq:GradientMomentbd} 
\bE\left[\sup_{t\leq T}|\partial ^{\gamma }X_{t}(\tau
,x)|^{p}\right] \leq N
\end{equation}
and for all multi-indices $\gamma$ with $|\gamma|=[ \beta]^- $
and all $x,y\in \mathbf{R}^{d}$, 
\begin{equation}\label{ineq:GradientMomentdiff}  
\bE\left[\sup_{t\leq T}|\partial ^{\gamma }X_{t}(\tau ,x)
-\partial ^{\gamma }X_{t}( \tau ,y) |^{p}\right] \leq
N|x-y|^{\{\beta\}^+p}.  
\end{equation}

\item If $H\equiv 0$, then  for each $\beta '\in [1,\beta)$,  $\bP$-a.s.\  $\nabla X_{\cdot}(\cdot,\cdot )\in C([0,T]^2;\cC_{loc}^{\beta'-1}(\bR^d;\bR^d))$ and for each $\epsilon>0$ and $p\ge 2$, there is a constant $N=N(d,p,N_{0},T,\beta',\epsilon)$  such that 
\begin{equation}  \label{ineq:momentestdirectcts}
\bE\left[\sup_{s,t\leq T}| r_1^{-(1+\epsilon)}\nabla X_{t}(s)| _{\beta'-1}^{p}\right]\le N.
\end{equation}
Moreover, for each $p\ge 2$, there is a constant $N=N(d,p,N_0,T,\beta) $ such that
for all multi-indices $\gamma$ with $|\gamma|=[\beta]^{-}$ and all $s,\bar s%
\in[0,T]$ and $x\in \mathbf{R}^d$, 
\begin{equation}\label{ineq:derivsmsbar}
\bE\left[\sup_{t\le T}|\partial^{\gamma}X_t(s,x)-\partial^{\gamma}X_t(%
\overline{s},x)|^p\right]\le N|s-\overline{s}|^{p/2}.
\end{equation}
\end{tightenumerate}
\end{proposition}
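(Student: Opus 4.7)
The strategy is the standard one: differentiate \eqref{eq:SDEIntro} with respect to $x$ to obtain linear SDEs for the spatial derivatives $\partial^\gamma X_t(\tau,x)$, derive pointwise $L^p$-bounds via the Burkholder-Davis-Gundy (BDG) inequality, Gronwall, and induction on $|\gamma|$, and then feed these bounds into Corollary \ref{cor:Kolmogorov Embedding} (with the Banach space $V=D([0,T];\bR^d)$ for part (1), and $V=C([0,T]^2;\bR^d)$ for part (2)) to produce the weighted H\"older-norm estimates.

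First, for every multi-index $\gamma$ with $1\le|\gamma|\le[\beta]^-$, one shows inductively in $|\gamma|$ that $\partial^{\gamma}X_{t}(\tau,x)$ exists and solves a linear SDE whose coefficients are polynomial expressions in $\{\partial^{\gamma'}X_t(\tau,x):|\gamma'|<|\gamma|\}$ multiplied by $\nabla^{k}b_t(X_t)$, $\nabla^{k}\sigma_t(X_t)$, $\nabla^{k}H_t(X_t,z)$ for some $k\le|\gamma|$. By Assumption \ref{asm:regularitypropflow}$(\beta)$ these composite coefficients are uniformly bounded (with the jump part dominated by $K_t(z)$), so BDG together with Gronwall applied to the resulting linear SDE yields \eqref{ineq:GradientMomentbd}. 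The existence of $\partial^\gamma X_t$ is justified by the usual difference-quotient argument, using continuous dependence of these linear SDEs on the initial data (cf.\ Theorem 3.3 of \cite{Ku04}).

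For \eqref{ineq:GradientMomentdiff} with $|\gamma|=[\beta]^-$, I would subtract the SDEs for $\partial^{\gamma}X_t(\tau,x)$ and $\partial^{\gamma}X_t(\tau,y)$ and split the resulting inhomogeneity into three types of terms: (i) a linear term in the increment itself with bounded random coefficients, absorbed by Gronwall; (ii) inductive contributions involving lower-order increments $\partial^{\gamma'}X_t(\tau,x)-\partial^{\gamma'}X_t(\tau,y)$ ($|\gamma'|<|\gamma|$), which are $O(|x-y|^{\{\beta\}^+p})$ in $L^p$ by the induction hypothesis and Lemma \ref{lem:Direct Flow Estimates}; and (iii) principal forcing terms of the form $[\nabla^{k}b_t(X_t(\tau,x))-\nabla^{k}b_t(X_t(\tau,y))]\Phi_t$ with $\Phi_t$ polynomial in derivatives of $X_t(\tau,y)$, together with analogues for $\sigma$ and $H$. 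For $k<[\beta]^-$ the field $\nabla^k$ is Lipschitz and \eqref{ineq:estdirectdifftposp} supplies a factor $|x-y|^p$; for the critical $k=[\beta]^-$ one invokes the $\{\beta\}^+$-H\"older continuity of $\nabla^{[\beta]^-}b_t,\nabla^{[\beta]^-}\sigma_t,\nabla^{[\beta]^-}H_t(\cdot,z)$ together with \eqref{ineq:estdirectdifftposp} to produce precisely the $|x-y|^{\{\beta\}^+p}$ rate. Gronwall then closes the estimate, and the jump integrals are handled by BDG for Poisson integrals using $\int_Z K_t(z)^2\pi(dz)\le N_0$.

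With \eqref{ineq:GradientMomentbd} and \eqref{ineq:GradientMomentdiff} in hand, Corollary \ref{cor:Kolmogorov Embedding} applied with $V=D([0,T];\bR^d)$ produces a c\`adl\`ag-in-$t$ modification of $\nabla X_\cdot(\tau,\cdot)$ lying $\bP$-a.s.\ in $D([0,T];\cC_{loc}^{\beta'-1}(\bR^d;\bR^d))$ for every $\beta'\in[1,\beta)$, and delivers \eqref{ineq:momentestdirectjmp}; the weight $r_1^{-\epsilon}$ accounts for the polynomial factor in $x$ introduced when the Sobolev-type embedding converts pointwise $L^p$-bounds into supremum-in-$x$ bounds. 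For part (2), the additional ingredient is continuity in the parameter $s$: using the flow identity $X_t(s,x)=X_t(0,X_s^{-1}(0,x))$ for $t\ge s$ coming from \eqref{def:flowdef}, differentiating in $x$, and combining the Lipschitz-in-$x$ estimates for derivatives of $X_t(0,\cdot)$ obtained in part (1) with the BDG-based time regularity $\bE|X_s(0,x)-X_{\bar s}(0,x)|^p\le N|s-\bar s|^{p/2}$ available for continuous SDEs, one obtains \eqref{ineq:derivsmsbar}; Corollary \ref{cor:Kolmogorov Embedding} with $V=C([0,T]^2;\bR^d)$ then yields \eqref{ineq:momentestdirectcts}. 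The principal obstacle is the bookkeeping for \eqref{ineq:GradientMomentdiff}: one must carefully isolate the single top-order forcing term that actually requires the $\{\beta\}^+$-H\"older continuity of $\nabla^{[\beta]^-}$ of the coefficients, and verify that no lower-order term in the Fa\`a-di-Bruno-type expansion produced by differentiating $b_t(X_t)$ etc.\ degrades the H\"older exponent below $\{\beta\}^+$ in the final estimate.
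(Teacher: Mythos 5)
Your overall strategy — establish pointwise $L^p$-moment estimates for $\partial^\gamma X$ by differentiating the SDE and applying BDG, Gronwall and induction on $|\gamma|$, then feed them into Corollary~\ref{cor:Kolmogorov Embedding} with $V=D([0,T];\bR^d)$ (resp.\ $V=C([0,T]^2;\bR^d)$) to convert them into weighted H\"older estimates — is exactly the structure of the paper's proof. The only material difference is that the paper does not re-derive the pointwise bounds: \eqref{ineq:GradientMomentbd} and \eqref{ineq:GradientMomentdiff} are obtained by taking $\lambda=0$ in (3.10)--(3.11) of Theorem~3.3 in \cite{Ku04} (and Theorem~6.4 in \cite{Ku97} for higher derivatives via induction), and \eqref{ineq:derivsmsbar} is cited directly from Theorem~4.6.4 in \cite{Ku97}. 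Your re-derivation of \eqref{ineq:GradientMomentbd}--\eqref{ineq:GradientMomentdiff} is the content of those cited theorems and is sound.

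One caveat in your treatment of part (2). To prove \eqref{ineq:derivsmsbar} you differentiate the identity $X_t(s,x)=X_t(0,X_s^{-1}(0,x))$, which produces the factor $\nabla X_s^{-1}(0,x)$. Moment bounds on derivatives of the inverse flow are established only \emph{after} this proposition (Lemma~\ref{lem:gradientinverseest} and Theorem~\ref{thm:diffeoandmomest}), so as written this risks circularity in the paper's logical order. The clean way to avoid this is to use the forward semigroup identity $X_t(s,x)=X_t(\bar s,X_{\bar s}(s,x))$ for $s\le\bar s\le t$, which together with the already-established derivative bounds on the direct flow and the elementary estimate $\bE\sup_{t\le T}|X_{\bar s}(s,x)-x|^p\le N|s-\bar s|^{p/2}$ gives \eqref{ineq:derivsmsbar} without invoking inverse-flow derivatives. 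The remainder of your argument (Kolmogorov embedding to pass to $C([0,T]^2;\cC_{loc}^{\beta'-1})$ and to \eqref{ineq:momentestdirectcts}) matches the paper.
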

\begin{proof}
(1) Fix a stopping time  $\tau\le T$ and write $X_t(\tau,x)=X_t(x)$.  First, let us assume that $[\beta]^{-}=1$. It  follows from  Theorem 3.4 in  \cite{Ku04} that  $\bP$-a.s.\ for all $t$, $X_t(\tau,\cdot)$ is continuously differentiable and 
$U_t=\nabla X_{t}(\tau,x) $
satisfies 
\begin{align}\label{eq:GradientofFlow}
dU_t &=\nabla 
b_t(X_{t})U_{t}dt+\nabla \sigma^{\varrho}_t (X_{t-})U_{t}dw^{\varrho}_{t}+
\int_{Z}\nabla H_t(X_{t-},z )U_{t-}q(dt,dz),\;\;\tau <t\le T,\notag\\
\nabla X_t&=I_d,\;\;t\le \tau,
\end{align}
where $I_d$ is the $d\times d$-dimensional identity matrix.  Taking $\lambda =0$ in the estimates (3.10) and (3.11) in Theorem 3.3 in \cite{Ku04}, we obtain  \eqref{ineq:GradientMomentbd} and \eqref{ineq:GradientMomentdiff}. Then applying Corollary \ref{cor:Kolmogorov Embedding} with $V=D([0,T];\mathbf{R}^d)$, we have that  $X_{\cdot}(\cdot)\in D([0,T];\cC^{\beta'}_{loc}(\bR^d;\bR^d))$ fand that  the  \eqref{ineq:momentestdirectjmp} holds.  The  proof for $[\beta]^{-}>1$ follows by induction (see, e.g. the proof of Theorem 6.4 in \cite{Ku97}).

(2)  The estimate \eqref{ineq:derivsmsbar} is given in Theorem 4.6.4 in \cite{Ku97}  in equation (19). 
The remaining items of part (2) then follow in exactly the same way as part (1) with the only exception being that we apply Corollary \ref{cor:Kolmogorov Embedding} with $V=C([0,T]^2;\mathbf{R}^d)$.
\end{proof}
\begin{lemma}\label{lem:gradientinverseest}
Let Assumption \ref{asm:regularitypropflow}$ (\beta)$ hold
for some $\beta>1$.
\begin{tightenumerate}
\item For each stopping time $\tau\le T$ and  $\beta '\in [1,\beta)$,   $\bP$-a.s.\ $\nabla X_{\cdot }(\tau ,\cdot)^{-1}\in 
D([0,T];\allowbreak\cC_{loc}^{\beta'-1}\allowbreak(\bR^d;\bR^d))$ 
and for each $p\geq 2$, there is a constant $N=N(d,p,N_0,T,\eta,N_{\kappa}) $
such that for all $x,y\in \mathbf{R}^{d}$ 
\begin{equation}\label{ineq:momentestgradinvbd}
\bE\left[\sup_{t\leq T}|\nabla X_{t}(\tau ,x)^{-1}|^{p}\right]\leq N
 \end{equation}%
and 
\begin{equation}\label{ineq:momentestgradinvdiff}
\bE\left[\sup_{t\leq T}|\nabla X_{t}(\tau ,x)^{-1}-\nabla X_{t}(\tau
,y)^{-1}|^{p}\right]\leq N|x-y|^{((\beta-1)\wedge 1 )p}.
\end{equation}
\item If $H\equiv0$, then for each $\beta '\in [1,\beta)$,  $\bP$-a.s.\ $\nabla X_{\cdot}(\cdot ,\cdot)^{-1}\in C([0,T]^{2};\cC_{loc}^{\beta'-1}(\bR^d;\bR^d))$ and  for each $p\geq 2$, there is a constant $%
N=N(d,p,N_0,T) $ such that for all $s,\bar s\in [0,T]$ and $x\in \mathbf{R}^d$,
\begin{equation} \label{ineq:momentestgraddiffctsinv}
\bE\left[\sup_{t\le T}|\nabla X_t(s,x)^{-1}-\nabla X_t(s,x)^{-1}|^{p}\right]\leq N|s-%
\bar{s}|^{p/2}. 
\end{equation}
\end{tightenumerate}
\end{lemma}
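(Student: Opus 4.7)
The strategy is to work directly with the matrix-valued semimartingale $V_t(x) := [\nabla X_t(\tau,x)]^{-1}$, whose existence at each $t$ is guaranteed by Proposition \ref{p:Regularity of direct flow} since $X_t(\tau,\cdot)$ is a diffeomorphism.  Starting from the identity $V_t\cdot \nabla X_t = I_d$ and the linear SDE \eqref{eq:GradientofFlow} for $\nabla X_t$, It\^o's formula yields a linear SDE for $V_t$ of the form
\begin{equation*}
dV_t(x) = V_{t-}(x)\tilde A_t\,dt + V_{t-}(x)\tilde B^\varrho_t\,dw^\varrho_t + \int_Z V_{t-}(x)\tilde C_t(z)\,q(dt,dz),\quad V_\tau(x)=I_d,
\end{equation*}
with $\tilde A_t,\tilde B_t,\tilde C_t(z)$ bounded expressions built from $\nabla b_t(X_t)$, $\nabla\sigma_t(X_{t-})$, $\nabla H_t(X_{t-},z)$, and $(I_d+\nabla H_t(X_{t-},z))^{-1}$; for instance $\tilde C_t(z) = -\nabla H_t(X_{t-},z)(I_d+\nabla H_t(X_{t-},z))^{-1}$.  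The key point is that Assumption \ref{asm:regularitypropflow}(2), combined with the Neumann-series bound $|(I_d+A)^{-1}|\le (1-\eta)^{-1}$ valid whenever $|A|\le\eta$, yields the uniform bound $|(I_d+\nabla H_t(x,z))^{-1}|\le N_\kappa\vee(1-\eta)^{-1}$, so the coefficients $\tilde A_t,\tilde B_t,\tilde C_t(z)$ satisfy the same integrability hypotheses as those of \eqref{eq:GradientofFlow}.

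Granted this SDE, the pointwise moment bound \eqref{ineq:momentestgradinvbd} follows by a direct Burkholder-Davis-Gundy and Gronwall argument applied to $|V_t(x)|^p$, exactly mirroring the proof of Theorem 3.3 in \cite{Ku04}.  For the difference estimate \eqref{ineq:momentestgradinvdiff}, I would write the SDE for $V_t(x)-V_t(y)$ as an inhomogeneous linear equation whose forcing involves increments such as $|\tilde A_t(x)-\tilde A_t(y)|$; Assumption \ref{asm:regularitypropflow}(1) bounds each such increment by a constant times $|X_t(\tau,x)-X_t(\tau,y)|^{(\beta-1)\wedge 1}$, and combining with \eqref{ineq:estdirectdiffnegp} from Lemma \ref{lem:Direct Flow Estimates} and another BDG/Gronwall pass closes the estimate.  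In the continuous case, \eqref{ineq:momentestgraddiffctsinv} is obtained analogously: writing the SDE satisfied by $V_t(s,x)-V_t(\bar s,x)$ for $t\ge s\vee\bar s$ and using Proposition \ref{p:Regularity of direct flow}(2) to control the initial deviation in terms of $|s-\bar s|^{1/2}$.

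To upgrade these pointwise estimates to the claimed regularity in the Fr\'echet space $\cC_{loc}^{\beta'-1}(\bR^d;\bR^d)$ with c\`{a}dl\`{a}g paths, I exploit the algebraic identity $\partial_j V_t = -V_t(\partial_j\nabla X_t)V_t$ (and its iterates), which expresses any spatial derivative $\partial^\gamma V_t$ as a polynomial in $V_t$ and the derivatives $\partial^\nu\nabla X_t$ with $|\nu|\le|\gamma|$.  Combining Proposition \ref{p:Regularity of direct flow} with \eqref{ineq:momentestgradinvbd} and \eqref{ineq:momentestgradinvdiff} yields the necessary moment and H\"older estimates on every $\partial^\gamma V_t$ up to the relevant order, and an application of Corollary \ref{cor:Kolmogorov Embedding} with $V=D([0,T];\mathbf{R}^{2d})$ (respectively $V=C([0,T]^2;\mathbf{R}^{2d})$ in part (2)) delivers the stated regularity.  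The main technical obstacle is the careful derivation of the SDE for $V_t$ in the jump setting, in particular keeping track of the compensator adjustment produced by the Poisson stochastic integral, since it is precisely this step that reveals Assumption \ref{asm:regularitypropflow}(2) as the natural hypothesis making all jump terms uniformly integrable.
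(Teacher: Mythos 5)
Your proposal follows the paper's proof in its essential structure: both derive the SDE for $V_t=[\nabla X_t(\tau,x)]^{-1}$ by It\^o's formula, observe that its coefficients enjoy the same boundedness and square-integrability properties as those of \eqref{eq:GradientofFlow} (precisely because $|(I_d+\nabla H)^{-1}|$ is uniformly bounded by $N_\kappa\vee(1-\eta)^{-1}$, via Assumption \ref{asm:regularitypropflow}(2) and the Neumann series), and then port the Gronwall-type moment estimates over unchanged. The only cosmetic difference is in the path-regularity upgrade: the paper simply composes $\nabla X_\cdot(\tau,\cdot)\in D([0,T];\cC_{loc}^{\beta'-1}(\bR^d;\bR^d))$ with the smooth matrix-inversion map, while you differentiate via $\partial_j V_t=-V_t(\partial_j\nabla X_t)V_t$ and invoke Corollary \ref{cor:Kolmogorov Embedding} — equivalent routes to the same conclusion.
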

\begin{proof}
(1) Let $\tau\le T$ be a fixed stopping time and write $X_t(\tau,x)=X_t(x)$.  Using It\^{o}'s formula (see also Lemma 3.12 in \cite{Ku04}), we deduce that $\bar U_t=[\nabla X_t(x)]^{-1}$ satisfies
\begin{align} \label{eq:Inversegradient}
d\bar U_t&=
\bar U_t\left(\nabla \sigma^{\varrho}_t(X_{t-}) \nabla
\sigma^{\varrho}_t(X_{t-}(\tau))\ -\nabla b_t(X_{t})\right)dt -\bar{U}%
_{t}\nabla \sigma^{\varrho}_t(X_{t})dw^{\varrho}_{t} \notag\\
&\quad -\int_{Z}\bar{U}%
_{t-}\nabla H_t(X_{t-},z )(I_d+\nabla H_t(X_{t-},z
))^{-1}q(dt,dz) \notag \\
&\quad +\int_Z\bar{U}_t\nabla H_t(X_{t-},z )^{2}(I_d+\nabla
H_t(X_{t-},z ))^{-1}\pi (dz) dt,\;\;\tau <t\le T,\notag\\
\bar{U}_t&=I_d, \;\; t\le \tau.
\end{align}
Since matrix inversion is a smooth mapping, the coefficients of the linear equation   \eqref{eq:Inversegradient} satisfy the same assumptions as the coefficients of the linear equation \eqref{eq:GradientofFlow}, and hence the derivation of the estimates \eqref{ineq:momentestgradinvbd} and \eqref{ineq:momentestgradinvdiff} proceed in the same way as the analogous estimates for \eqref{eq:GradientofFlow}. To see that  $\bP$-a.s.\ $X_{\cdot}(\cdot)^{-1}\in D([0,T];\cC_{loc}^{\beta
'-1}(\bR^d;\bR^d))$, we only need to note that  $\bP$-a.s.\ $X_{\cdot}(\cdot )\in D([0,T];\cC_{loc}^{\beta
'-1}(\bR^d;\bR^d))$ and  that matrix inversion is  a smooth mapping.  Part (2) follows with the obvious changes. 
\end{proof}

As an immediate corollary, we obtain the diffeomorphism property of the flow 
$X_t(\tau,x)$ under the assumptions  Assumption \ref{asm:regularitypropflow}$(\beta)$, $\beta>1$. 

\begin{corollary}\label{c:DiffeomorphismProperty} Let Assumption \ref{asm:regularitypropflow}$(\beta)$  hold. 
\begin{tightenumerate}
\item For each stopping time $\tau\le T$ and $\beta'\in [1,\beta)$  the mapping $X_{t}(\tau,\cdot):\mathbf{R}%
^{d}\rightarrow \mathbf{R}^{d}$ is a $\cC_{loc}^{\beta'}(\bR^d;\bR^d)$%
-diffeomorphism, $\bP$-a.s.\  $X_{\cdot}(\tau,\cdot),
X_{\cdot}^{-1}(\tau,\cdot )\in D([0,T];\cC_{loc}^{\beta'}(\bR^d;\bR^d))$ and for each $t\in[0,T]$, $X_{t-}^{-1}(\tau)$ coincides
with the inverse of $X_{t-}(\tau)$.

\item If $H\equiv 0$, then for each $\beta'\in [1,\beta)$, $\bP$-a.s.\ $X_{\cdot}(\cdot,\cdot),X^{-1}_{\cdot}(\cdot,\cdot) \in C([0,T]^{2},\cC_{loc}^{\beta'}(\bR^d;\bR^d))$.
\end{tightenumerate}
\end{corollary}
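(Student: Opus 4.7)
The plan is to assemble three results already in hand and then transfer the H\"older regularity of the direct flow to its spatial inverse via the inverse function theorem. The homeomorphism property of $X_t(\tau,\cdot)$ and $X_{t-}(\tau,\cdot)$, the coincidence of $X_{t-}^{-1}(\tau)$ with the inverse of $X_{t-}(\tau)$, and the c\`adl\`ag/continuity of $X_{\cdot}^{-1}(\tau,\cdot)$ in $t$ and $x$, respectively, are all obtained directly from Proposition \ref{prop:homeomorphism}(1). Proposition \ref{p:Regularity of direct flow}(1) gives $\bP$-a.s.\ $\nabla X_\cdot(\tau,\cdot)\in D([0,T];\cC^{\beta'-1}_{loc}(\bR^d;\bR^d))$, and combined with the path-wise continuity of $X_{\cdot}(\tau,\cdot)$ this yields $X_\cdot(\tau,\cdot)\in D([0,T];\cC^{\beta'}_{loc}(\bR^d;\bR^d))$. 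Lemma \ref{lem:gradientinverseest}(1) further guarantees that $[\nabla X_t(\tau,\cdot)]^{-1}$ exists for all $t$ and that $[\nabla X_\cdot(\tau,\cdot)]^{-1}\in D([0,T];\cC^{\beta'-1}_{loc}(\bR^d;\bR^d))$.

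Now I would fix an $\omega$ in the full-measure set on which all the above regularity statements hold, so that the classical inverse function theorem applies path-wise: since $X_t(\tau,\cdot)$ is a $C^1$-homeomorphism with an everywhere invertible gradient, $X^{-1}_t(\tau,\cdot)$ is itself $C^1$ with
\begin{equation*}
\nabla X^{-1}_t(\tau,x)=\bigl[\nabla X_t(\tau,X^{-1}_t(\tau,x))\bigr]^{-1}.
\end{equation*}
The right-hand side is the composition of the locally H\"older map $y\mapsto [\nabla X_t(\tau,y)]^{-1}$ with the continuous map $X^{-1}_t(\tau,\cdot)$, which is locally Lipschitz because its derivative is locally bounded by Lemma \ref{lem:gradientinverseest}(1). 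For $\beta'\in[1,2)$ the standard composition estimate for local H\"older spaces (a H\"older function composed with a Lipschitz function remains H\"older of the same exponent on compact sets) then gives $\nabla X^{-1}_t(\tau,\cdot)\in\cC^{\beta'-1}_{loc}(\bR^d;\bR^d)$, which combined with the continuity of $X^{-1}_t(\tau,\cdot)$ yields the $\cC^{\beta'}_{loc}$-diffeomorphism property. For $\beta'\in[2,\beta)$ the argument proceeds by induction on $[\beta']^-$: once $X^{-1}_t(\tau,\cdot)\in\cC^{k}_{loc}$ with $k<\beta'$ is established, differentiating the identity above and applying Fa\`a di Bruno together with the product rule shows that the higher derivatives of $X^{-1}_t$ are polynomial combinations of derivatives of $X_t$ evaluated at $X^{-1}_t$ and of entries of $[\nabla X_t]^{-1}$, all of which lie in the appropriate local H\"older classes by Proposition \ref{p:Regularity of direct flow}(1) and Lemma \ref{lem:gradientinverseest}(1). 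The c\`adl\`ag property of $X^{-1}_\cdot(\tau,\cdot)$ in $D([0,T];\cC^{\beta'}_{loc})$ is inherited from the c\`adl\`ag-in-$t$ property of the right-hand sides of these identities together with the uniform-in-$t$ local H\"older bounds supplied by the same results.

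Part (2) follows by exactly the same scheme with the obvious substitutions: Proposition \ref{prop:homeomorphism}(2) provides joint continuity of $X_t(s,\cdot)$ and $X^{-1}_t(s,\cdot)$ in $(s,t,x)$ in the absence of jumps, and the analogues of Proposition \ref{p:Regularity of direct flow}(2) and Lemma \ref{lem:gradientinverseest}(2) place the relevant gradients in $C([0,T]^2;\cC^{\beta'-1}_{loc})$, so the inverse-function-theorem composition argument now yields $X^{-1}_\cdot(\cdot,\cdot)\in C([0,T]^2;\cC^{\beta'}_{loc}(\bR^d;\bR^d))$. I expect the only mildly delicate step to be the bookkeeping for $\beta'\ge 2$: one must verify the Fa\`a di Bruno expansion fits in the chosen H\"older class without losing exponents, and keep track of how the composition with $X^{-1}_t$ interacts with the weight $r_1$ (which is why the results from which I import regularity are stated with the right weighted norms). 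Once that routine bookkeeping is carried out, everything in the statement of the corollary follows immediately from the three cited results.
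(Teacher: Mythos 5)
Your proposal is correct and follows essentially the same route as the paper: import the homeomorphism and regularity facts from Proposition \ref{prop:homeomorphism}, Proposition \ref{p:Regularity of direct flow}, and Lemma \ref{lem:gradientinverseest}, use the chain-rule identity $\nabla X_t^{-1}(x)=[\nabla X_t(X_t^{-1}(x))]^{-1}$, and bootstrap the H\"older regularity of the inverse by induction on the order of differentiability. The only cosmetic difference is that the paper invokes Hadamard's theorem to conclude the diffeomorphism property while you appeal to the inverse function theorem directly (which is adequate here since the global homeomorphism is already supplied by Proposition \ref{prop:homeomorphism}), and the paper's inductive step is stated more tersely than your Fa\`a di Bruno bookkeeping; in substance the arguments coincide.
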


\begin{proof}
(1) Fix a stopping time  $\tau\le T$ and write $X_t(\tau,x)=X_t(x)$.  It follows from  Propositions \ref{prop:homeomorphism} and \ref{p:Regularity of direct flow} that $\bP$-a.s.\ for all $t$, the mappings $X_{t}(\cdot),X_{t-}(\cdot):\mathbf{R}^d\rightarrow \mathbf{R}^d$ are homeomorphisms and $X_{\cdot }(\cdot ) \in D([0,T];\cC^{\beta'}_{loc}(\bR^d;\bR^d))$. Moreover, by Lemma \ref{lem:gradientinverseest}, $\bP$%
-a.s. for all $t$ and $x$, the matrix $\nabla X_{t}\left( \tau ,x\right) $
has an inverse.  Therefore, by Hadamard's Theorem (see, e.g., Theorem 0.2 in \cite{DeHoIm13}), $\bP$-a.s.\ for all $t$, $X_{t}(\cdot)\ $ is a diffeomorphism. Using the chain rule, $\bP$-a.s.\ for all $t$ and $x$,
\begin{equation}  \label{eq:inversematrixidentity}
\nabla X_{t}^{-1}(x)=\nabla X_{t}(X_{t}^{-1}(x))^{-1}.
\end{equation}
Since, by Lemma \ref{lem:gradientinverseest}, $\bP$-a.s.\ $[\nabla X_{\cdot }(\cdot )]^{-1}\in D([0,T];\cC_{loc}^{\beta'-1}(\bR^d;\bR^d))$ and we know that $\bP$-a.s.\ for all $t$,   $X^{-1}_{t}(\cdot)$ is differentiable,  it follows from  \eqref{eq:inversematrixidentity} that  $\bP$-a.s.\ $$\nabla X_{\cdot }(X_{\cdot}^{-1}(\cdot))^{-1}\in D([0,T];\cC_{loc}^{(\beta'-1)\wedge 1}(\bR^d;\bR^d)).$$ One then proceeds inductively to complete the proof.  Making the obvious changes in
the proof of part (1), we obtain part (2).
 \end{proof}
We conclude with a derivation of H\"{o}lder moment estimates of the inverse
flow $X_t^{-1}(\tau,x)$, which will complete the proof of Theorem \ref{thm:diffeoandmomest}.
\begin{proof}[Proof of Theorem \ref{thm:diffeoandmomest}]
(1)  Fix a stopping time  $\tau\le T$ and write $X_t(\tau,x)=X_t(x)$.  Fix $\epsilon>0$. First, let us assume that $\left[ \beta \right] ^{-}=1$.  Set  $J_{t}(x)=|\det \nabla
X_{t}(x)|$.  It is clear from \eqref{ineq:GradientMomentbd}  that for each $p\ge 2$ and $x$, there is a constant $N=N(d,p,N_{0},T)$   such that
\begin{equation}\label{ineq:estofdeterm}
\bE[\sup_{t\le T}|J_t(x)|^p]\le N.
\end{equation}
Using the change of variable $(\bar{x},\bar y)=(X^{-1}_{t}(x),X^{-1}_t(y))$,  Fatou's lemma, Fubini's theorem, H\"{o}lder's
inequality, and the inequalities \eqref{ineq:estdirectgrowthnegp}, \eqref{ineq:estofdeterm},  \eqref{ineq:weightest}, \eqref{ineq:estdirectdifftposp}, and \eqref{ineq:estdirectdiffnegp},  for any $\delta\in (0,1]$ and $p>\frac{d}{\epsilon}$, we obtain  that there is a constant  $N=N(d,p,N_{0},T,\delta,\eta,N_{\kappa},\epsilon)$ such that 
\begin{align*}
\bE\sup_{t\leq T}\int_{\mathbf{R}^{d}}|r_{1}(x)^{-(1+\epsilon)}X_{t}^{-1}(x)|^pdx &\leq \int_{\mathbf{R}
^{d}}|\bar{x}| ^{p}\bE\sup_{t\leq
T}[r_{1}(X_{t}( \bar{x}) )^{-p(1+\epsilon)}J_{t}(\bar{x})]d\bar{x}\\
&\leq N\bE\int_{\mathbf{R}^{d}}r_1(\bar{x})
^{-p\epsilon}d\bar{x}\le N
\end{align*}
and
\begin{gather*}
\bE\sup_{t\leq T} \int_{|x-y|<1}\frac{%
|r_1^{-(1+\epsilon)}(x) X_{t}^{-1}(x)-r_1^{-(1+\epsilon)}(y) X_{t}^{-1}(y)|^{p}
}{|x-y|^{2d+\delta p}}dxdy\\
\le  \int_{|\bar x-\bar y|<1}\bE\sup_{t\leq T}
 \left[\frac{r_1^{-p(1+\epsilon)}(X_{t}(\bar{x}))|\bar x-\bar y|^pJ_{t}(\bar{x})J_t(\bar{y})}{|X_{t}(\bar{x})-X_{t}( \bar{y})|^{2d+\delta p}}\right] d\bar{x}d\bar{y}\\
+ \int_{|\bar x-\bar y|<1}\bE\sup_{t\leq T}
  \left[\frac{|\bar y|^p|r_1^{-(1+\epsilon)}(X_{t}(\bar{x}))-r_1^{-(1+\epsilon)}(X_{t}(\bar{y}))|^pJ_{t}(\bar{x})J_t(\bar{y})}{|X_{t}(\bar{x})-X_{t}( \bar{y})|^{2d+\delta p}}\right] d\bar{x}d\bar{y}\\
\le  N\int_{|\bar{x}-\bar{y}|<1}\frac{r_{1}(\bar{x})^{-p(1+\epsilon) }}{|\bar{x}-\bar{y}|^{2d-(1-\delta 
)p}}d\bar{x}d\bar{y}+N\int_{|\bar{x}-\bar{y}|<1}\frac{r_1(\bar x)^{-p(1+\epsilon)}+r_1(\bar y)^{-p(1+\epsilon)}}{|\bar{x}-\bar{y}|^{2d-(1-\delta
)p}}d\bar{x}d\bar{y}\le N.
\end{gather*}
Similarly,  making use of the inequalities  \eqref{ineq:estdirectgrowthnegp}, \eqref{ineq:estofdeterm},  \eqref{ineq:weightest}, \eqref{ineq:estdirectdifftposp}, \eqref{ineq:estdirectdiffnegp}, \eqref{ineq:momentestgradinvbd},  and  \eqref{ineq:momentestgradinvdiff},  for any $p>\frac{d}{\epsilon}\vee\frac{d}{\beta -\beta '}\vee \frac{d}{2 -\beta '}$,  we get
\begin{align*}
\bE\sup_{t\leq T}\int_{\mathbf{R}^{d}}|r^{-\epsilon}(x)\nabla
X_{t}^{-1}(x) |^{p}dx&\le \int_{\mathbf{R}^{d}}\bE\sup_{t\leq T}[r_{1}(X_{t}(
\bar{x}) )^{-p\epsilon}| [\nabla
X_{t}(\bar{x})]^{-1}|^{p}J_{t}(\bar{x})]d\bar{x}\\
&\leq N\bE\int_{\mathbf{R}%
^{d}}r_{1}(\bar{x})^{-p\epsilon}d\bar{x}\le N
\end{align*}
and
\begin{gather*}
\bE\sup_{t\leq T} \int_{|x-y|<1}\frac{%
|r_1^{-\epsilon}(x)\nabla X_{t}^{-1}(x)-r_1^{-\epsilon}(y)\nabla X_{t}^{-1}(y)|^{p}
}{|x-y|^{2d+(\beta'-1)p}}dxdy\\
\leq \int_{|\bar x-\bar y|<1}\bE\sup_{t\leq T}
\left[\frac{|r_1^{-\epsilon}(X_{t}(\bar{x}))[\nabla X_{t}(\bar{x})]^{-1}-r_1^{-\epsilon}(X_{t}(\bar{y}))[\nabla X_{t}(
\bar{y})]^{-1}|^{p}J_{t}(\bar{x})J_t(\bar{y})}{|X_{t}(\bar{x})-X_{t}( \bar{y})|^{2d+(\beta'-1)p}}\right] d\bar{x}d\bar{y}
\end{gather*}
$$
\le  N\int_{|\bar{x}-\bar{y}|<1}\frac{r_{1}(\bar{x})^{-p\epsilon }}{|\bar{x}-\bar{y}|^{2d-(\beta-\beta' 
)p}}d\bar{x}d\bar{y}+N\int_{|\bar{x}-\bar{y}|<1}\frac{r_1(\bar x)^{-p\epsilon}+r_1(\bar y)^{-p \epsilon}}{|\bar{x}-\bar{y}|^{2d-(2-\beta')
p}}d\bar{x}d\bar{y}\le N,
$$
where $N=N(d,p,N_{0},T,\beta',\eta,N_{\kappa},\epsilon)$ is a positive constant.
Therefore, combining the above estimates and applying Corollary \ref{cor:SobolevFull}, we have that for all $p\ge 2$, there is f
a constant $N=N(d,p,N_{0},T,\beta',\eta,N_{\kappa},\epsilon)$, such that
$$
\bE\left[\sup_{t\leq T}|r_{1}^{-(1+\epsilon )}X^{-1}_{t}(\tau )|_{0}^{p}\right]+\bE\left[\sup_{t\leq T}|r_{1}^{-\epsilon }\nabla X^{-1}_{t}(\tau )|_{\beta'-1}^{p}\right]\leq N.
$$
 It is well-known that  the the inverse map $\mathfrak{I}$ on the set of invertible $d\times d$-dimensional matrices is infinitely 
differentiable and for each $n$, there is a constant $N=N(n,d)$ such that for all invertible matrices $M$, the $%
n$th derivative of $\mathfrak{I}$ evaluated at $M$, denoted $\mathfrak{I} ^{(n)}(M)$, satisfies
\begin{equation}\label{ineq:inversemapderivative}
\left\vert \mathfrak{I} ^{(n)}(M)\right\vert \leq N|M^{-n-1}|\le N\left\vert
M^{-1}\right\vert ^{n+1}.  
\end{equation}
We claim that for each $n$ and every multi-index $\gamma$  with $|\gamma|= n$, the components of  $\partial^{\gamma} X_t^{-1}(x)$ are a polynomial in terms of the entries of  $[\nabla X_t(X_t^{-1}(x))]^{-1}$ and $\partial^{\gamma'} \nabla X_t(X_t^{-1}(x))$ for all multi-indices $\gamma'$ with $1\le |\gamma'|\le n-1$. 
Assume that statement holds for some $n$. By the chain rule, for each $\omega,t,$ and $x$, we have
$$
\nabla  (\nabla X_t(X_t^{-1}(x))^{-1})
=\mathfrak{I}^{(1)}(\nabla X _t(X_t^{-1}(x)))\nabla^2 X_t(X_t^{-1}(x))\nabla X_t (X_t^{-1}(x))^{-1}
$$
and for all multi-indices $\gamma$ with $1\le |\gamma'|\le n-1$,  we have
$$
\nabla (\partial^{\gamma'} \nabla X_t(X_t^{-1}(x)))= \partial^{\gamma'} \nabla^2 X_t(X_t^{-1}(x))\nabla X_t(X_t^{-1}(x))^{-1},
$$
where $\nabla ^2 X_t(X_t^{-1}(x))$ is the tensor of second-order derivatives of $X_t(\cdot)$ evaluated at $X_t^{-1}(x)$.
This implies that for every multi-index $\gamma$  with $|\gamma|= n+1$, the components of  $\partial^{\gamma} X_t^{-1}(x)$ are a polynomial in terms of the entries of  $\nabla X_t(X_t^{-1}(x))^{-1}$ and $\partial^{\gamma'} \nabla X_t(X_t^{-1}(x))$ for all multi-indices $\gamma'$ with $1\le |\gamma'|\le n$. 
By  induction, the claim is true. Therefore, for $[\beta]^-\ge 2$, using \eqref{ineq:GradientMomentbd} and \eqref{ineq:GradientMomentdiff}, we  obtain the moment estimates for the inverse flow in the almost exact same way we did for  $[\beta]^-=1$. Making the obvious changes in
the proof of part (1), we obtain part (2). This completes the proof of Theorem \ref{thm:diffeoandmomest}.
\end{proof}

\subsection{Strong limit of a sequence of flows: Proof of Theorem \ref{thm:stronglimit}}
\begin{proof}[Proof of Theorem \protect\ref{thm:stronglimit}]
Let $\tau\le T$ be a fixed stopping time and write $X_t(\tau,x)=X_t(x)$.    For each $n$, let 
\begin{equation*}
Z^{(n)}_t(x) =X^{(n)}_t(x)-X_{t}(x), \;(t,x)\in [0,T]\times\mathbf{R}^d.
\end{equation*}
Throughout the proof we denote by $(\delta _{n})_{n\ge 1}$ a deterministic
sequence with $\delta _{n}\rightarrow 0$ as $n\rightarrow \infty $ that may change from line to line. Let $N=N(p,N_{0},T)$ be a positive constant, which may change from line to line. By virtue of Theorem 2.1 in \cite{Ku04}  and \eqref{ineq:growthdirectposp}, for all $p\geq 2$ and $t,x$ and $n,$ we have
\begin{equation*}
\bE\left[\sup_{s\leq t}|Z^{(n)}_s(x)|^{p}\right]\leq N\bE%
\int_{]0,t]}|Z^{(n)}_s(x)|^{p}ds+ N\delta _{n}r_{1}(x)^{p}.
\end{equation*}
Since the right-hand-side is finite by \eqref{ineq:growthdirectposp}, applying Gronwall's lemma we  get that for all $x$ and $n$, 
\begin{equation} \label{ineq:estZn}
\bE[\sup_{t\leq T}|Z^{(n)}_t(x)|^{p}]\leq N\delta
_{n}r_{1}(x)^{p}. 
\end{equation}
Similarly, by \eqref{ineq:GradientMomentbd},  
 for all $x$ and $n,$ we have
\begin{equation}\label{ineq:gradboundZn}
\bE\left[\sup_{t\leq T}|\nabla Z^{(n)}_t(x)|^{p}\right]\leq N\delta _{n}.
\end{equation}
Using  \eqref{ineq:GradientMomentbd}, for all $x,y,$ and $n$, we obtain
\begin{equation*}
\bE\left[\sup_{t\leq T}|Z^{(n)}_t(x)-Z^{(n)}(y)|^{p}\right]\le|x-y|^p \bE\sup_{t\leq T}\int_{0}^{1}|\nabla
Z^{(n)}_t(y+\theta (x-y))|^pd\theta \leq N|x-y|^{p}.
\end{equation*}%
It follows immediately from \eqref{ineq:GradientMomentdiff} that  for all $x,y,$ and $n$, 
$$
\bE[\sup_{t\le T}]|\nabla Z^{(n)}_t(x)-\nabla Z^{(n)}_t(y)|^{p}]\leq N|x-y|^{(\beta-1)\vee 1}.
$$
Thus, by Corollary \ref{cor:SobEmbLimit}, we have 
\begin{equation}\label{eq:directsequence}
\lim_{n\rightarrow \infty }\left(\bE\left[\sup_{t\leq T}|r_{1}^{-(1+\epsilon
)}X_{t}^{(n)}-r_{1}^{-(1+\epsilon )}X_{t}|_{0}^{p}\right]+\bE%
\left[\sup_{t\leq T}|r_{1}^{-\epsilon }\nabla X_{t}^{(n)}-r_{1}^{-\epsilon
}\nabla X_{t}|_{0}^{p}\right]\right)=0.
\end{equation}
Owing to a standard interpolation inequality for H\"{o}lder spaces (see, e.g. Lemma 6.32 in \cite{GiTr01}), for each $\delta \in (0,1)$ and  $\bar \beta \in (\beta ',\beta)$, there is a
constant $N(\delta)$ such that%
\begin{align*}
\bE\left[\sup_{t\leq T}|r_{1}^{-\epsilon }\nabla X_{t}^{(n)}-r_{1}^{-\epsilon }\nabla X_{t}|_{\beta '-1}^{p}\right] &\le \delta \bE\left[\sup_{t\leq T}|r_{1}^{-\epsilon }\nabla
X_{t}^{(n)}-\nabla X_{t}|_{\bar{\beta}-1}^{p}\right] \\
&\quad +C_{\delta }\bE\left[\sup_{t\leq T}|r_{1}^{-\epsilon }\nabla
X_{t}^{(n)}-\nabla X_{t}|_{0}^{p}\right],
\end{align*}
and hence since
\begin{equation*}
\sup_{n}\bE\left[\sup_{t\leq T}| r_{1}^{\varepsilon }\nabla
X^{(n)}|_{\bar{\beta}-1}^{p}\right]+\bE\left[\sup_{t\leq
T}|r_{1}^{\varepsilon }\nabla X_t |_{%
\bar{\beta}-1}^{p}\right]<\infty,
\end{equation*}
we have
\begin{equation*}
\lim_{n\rightarrow\infty}\bE[\sup_{t\leq T}|r_{1}^{-\epsilon }\nabla X_{t}^{(n)}-r_{1}^{-\epsilon }\nabla X_{t}|_{\beta '-1}^{p}]=0.
\end{equation*}%
By Theorem \ref{thm:diffeoandmomest}, Corollary \ref{cor:SobEmbLimit},  and the interpolation inequality for H\"{o}lder spaces used above, in order to show
$$
\lim_{n\rightarrow \infty }\bE\left[\sup_{t\leq T} | r_1^{-(1+\epsilon)}X^{(n);-1}_t(\tau )-r_1^{-(1+\epsilon)}X^{-1}_t(\tau)|_{0}^{p}\right]=0
$$
$$\lim_{n\rightarrow \infty }\bE\left[\sup_{t\leq T} |
r_1^{-\epsilon}\nabla X^{(n);-1}_t (\tau) -r_1^{-\epsilon}\nabla X^{-1}_{t}(\tau ) |
_{\beta'-1}^{p}\right]=0,
$$
 it suffices to show that for each $x$,
\begin{equation}\label{eq:pointwiseinverselimit}
d\bP-\lim_{n\rightarrow\infty}\sup_{t\leq T}|X_{t}^{(n);-1}(x)-X_{t}^{-1}(x)|=0  
\end{equation}%
and%
\begin{equation}\label{eq:pointwisegradientlimit}
d\bP-\lim_{n\rightarrow\infty}\sup_{t\leq T}|\nabla X_{t}^{(n);-1}(x)-\nabla
X_{t}^{-1}(x)|= 0.
\end{equation}
For each $n$, define  
\begin{equation*}
 \Theta _{t}^{(n)}(x)=r_{1}(X_{t}^{(n)}(x))^{-1}-r_{1}(X_{t}(x))^{-1}, \; (t,x)\in [ 0,T]\times \mathbf{R}^{d}.
\end{equation*}%
For all $\omega ,t,x,$ and $n,$ we have 
$$
|\Theta _{t}^{(n)}(x)| \leq r_{1}(X_{t}^{(n)}(x))^{-1}r_{1}(X_{t}(x))^{-1}|Z_{t}^{(n)}(x)|,
$$
and hence using H\"{o}lder's inequality, \eqref{ineq:estdirectdiffnegp}, and  \eqref{ineq:estZn},  we obtain that for all $p\ge 2$, $x$,  there is a constant $N=N(p,N_{0},T,\eta,N_{\kappa})$ such that for all  $n$, 
$$
\bE[\sup_{t\leq T}|\Theta _{t}^{(n)}(x)|^{p}] \leq Nr_1(x)^{-p}\delta _{n},
$$
where  $N=N(p,N_{0},T,\eta,N_{\kappa})$ is a  constant.
Furthermore, since 
$$
|\nabla \Theta _{t}^{(n)}(x)| \leq r_{1}(X_{t}^{(n)}(x))^{-2}|\nabla X_{t}^{(n)}(x)|+ r_{1}(X_{t}(x))^{-2}|\nabla X_{t}^{(n)}(x)|,
$$
for all $\omega,t,x,$ and $n$, applying \eqref{ineq:estdirectdiffnegp} and  \eqref{ineq:GradientMomentbd}, for all $p\ge 2$, $x$, and $n$,  we get
$$
\bE\left[\sup_{t\leq T}| r_1(x)\Theta _{t}^{(n)}(x)-r_1(y)\Theta _{t}^{(n)}(y)|^{p}\right] \leq N|x-y|^p.
$$
Then owing to Corollary \ref{cor:SobEmbLimit}, for
each $p\geq 2,$ 
\begin{equation}\label{eq:Thetaconvtozero}
\lim_{n\rightarrow \infty }\bE\left[\sup_{t\leq T}|\Theta _{t}^{(n)}|^{p}_0\right]=0.
\end{equation}%
We claim that for each $R>0$, 
\begin{equation}\label{nf10}
d\bP-\lim_{n\rightarrow\infty}E (n,R):=d\bP-\lim_{n\rightarrow\infty}\sup_{t\leq T}|
X_{t}^{(n);-1}-X_{t}^{-1}|_{0;\left\{ \left\vert x\right\vert \leq R\right\} }=0.  
\end{equation}%
Fix $R>0$. It is enough to show that every subsequence of $E
(n)=E (n,R)$ has a sub-\linebreak subsequence converging to $0$, $\bP$%
-a.s..  Owing to \eqref{eq:directsequence} and \eqref{eq:Thetaconvtozero}, for a given subsequence $(E(n_{k}))$, we can always find sub-subsequence
(still denoted $(E(n_{k}))$ to avoid double indices) such that $\bP$%
-a.s., 
\begin{equation}\label{eq:limitdirectRbar}
\lim_{k\rightarrow \infty }\sup_{t\leq T}|X_{t}^{(n_{k})}-X_{t}|_{\beta';\left\{ \left\vert x\right\vert \leq \bar R\right\} } =0, \;\;\forall \;\bar R>0,
\end{equation}
and
\begin{equation}\label{eq:oneoverlimit}
\lim_{k\rightarrow \infty }\sup_{t\leq T}|
r_{1}(X_{t}^{(n_{k})}(x))^{-1}-r_{1}(X_{t}(x))^{-1}|_0=0.
\end{equation}
Fix an $\omega $ for which both limits are zero. We will prove that 
\begin{equation}\label{eq:inversesubseqconvergefixedomega}
\lim_{k\rightarrow \infty }\sup_{t\leq T}|X_{t}^{(n_{k});-1}(\omega )-X_{t}^{-1}(\omega )|_{0;\left\{ \left\vert x\right\vert \leq R\right\} } 
=0.  
\end{equation}%
Suppose, by contradiction, that (\ref{eq:inversesubseqconvergefixedomega}) is
not true. Then there exists an $\varepsilon >0$ and a subsequence of $(n_{k})
$ (still denoted $(n_{k}))$ such that $t_{n_{k}}\rightarrow t-$ (or $%
t_{n_{k}}\rightarrow t+$) and $x_{n_{k}}\rightarrow x$ as $k\rightarrow \infty $
with $\left\vert x_{n_{k}}\right\vert \leq R$ such that (dropping $\omega )$,%
\begin{equation}\label{nf12}
|
X_{t_{n_{k}}}^{(n_{k});-1}(x_{n_{k}})-X_{t_{n_{k}}}^{-1}(x_{n_{k}})| \geq \varepsilon .  
\end{equation}%
Arguing by contradiction and using (\ref{eq:oneoverlimit}), we have  
\begin{equation}\label{ineq:boundsequence}
\sup_{k}|X_{t_{n_{k}}}^{(n_{k});-1}(x_{n_{k}})|<\infty .
\end{equation}
Applying \eqref{ineq:boundsequence}, \eqref{eq:limitdirectRbar}, and the fact that $X_{\cdot}(\cdot),X^{-1}_{\cdot}(\cdot)\in D([0,T];\cC_{loc}^{\beta'}(\bR^d;\bR^d))$ , we obtain
\begin{gather*}
\lim_{k\rightarrow \infty
}\left(X_{t-}(X_{t_{n_{k}}}^{(n_{k});-1}(x_{n_{k}}))-X_{t-}(X_{t_{n_{k}}}^{-1}(x_{n_{k}}))\right)=\lim_{k\rightarrow \infty
}\left(X_{t-}(X_{t_{n_{k}}}^{(n_{k});-1}(x_{n_{k}}))-x_{n_{k}}\right)\\
=\lim_{k\rightarrow \infty
}\left(X_{t-}(X_{t_{n_{k}}}^{(n_{k});-1}(x_{n_{k}}))-X_{t_{n_{k}}}^{(n_{k})}(X_{t_{n_{k}}}^{(n_{k});-1}(x_{n_{k}}))\right)
\end{gather*}
\begin{gather*}
=\lim_{k\rightarrow \infty
}\left(X_{t-}(X_{t_{n_{k}}}^{(n_{k});-1}(x_{n_{k}}))-X_{t_{n_{k}}}(X_{t_{n_{k}}}^{(n_{k});-1}(x_{n_{k}}))\right)\\
+\lim_{k\rightarrow \infty
}\left(X_{t_{n_{k}}}(X_{t_{n_{k}}}^{(n_{k});-1}(x_{n_{k}}))-X_{t_{n_{k}}}^{(n_{k})}(X_{t_{n_{k}}}^{(n_{k});-1}(x_{n_{k}}))\right)
=0,
\end{gather*}
which contradicts \eqref{nf12}, and hence proves \eqref
{eq:inversesubseqconvergefixedomega},  \eqref{nf10}, and  \eqref{eq:pointwiseinverselimit}.
For each $n$, define 
$$\bar{U}^{(n)}_t=\bar{U}^{(n)}(t,x)=\nabla
X_{t}^{(n)}(x) ^{-1}\quad \textrm{and}\quad \bar{U}(t)=\bar{U}(t,x)
=\nabla X_{t}(x)^{-1},\;\;(t,x)\in [0,T]\times \mathbf{R}^{d}.$$  Using \eqref{ineq:momentestgradinvbd} and  \eqref{ineq:momentestgradinvdiff} and repeating the arguments given above, for each  $p\geq 2,$ we get
\begin{equation}\label{f3}
\lim_{n}\bE[\sup_{t\leq T}|r_{1}^{-\epsilon }\bar{U}^{(n)}_t-%
r_{1}^{-\epsilon }\bar{U}_t|_{\beta'-1}^{p}]=0.  
\end{equation}
Then (\ref{f3}) and (\ref{nf10}) imply that for each $R>0$, 
\begin{gather*}
d\bP-\lim_{n\rightarrow\infty}\sup_{t\leq T}|\nabla
X_{t}^{(n);-1}(x)-\nabla X_{t}^{-1}(x)|_{0;\left\{ \left\vert x\right\vert \leq R\right\} }\\
=d\bP-\lim_{n\rightarrow\infty}\sup_{t\leq T}|\nabla
X_{t}^{(n)}(X_{t}^{(n);-1}(x))^{-1}-\nabla X_{t}(X_{t}^{-1}(x))^{-1}|_{0;\left\{ \left\vert x\right\vert \leq R\right\} }=0,
\end{gather*}
which yields \eqref{eq:pointwisegradientlimit} and completes the proof.
\end{proof}

\section{Classical solution of  an SPDE: Proof of Theorem \ref{thm:SPDEEx}}\label{sec:classicalsolutionctsexist}

\begin{proof}[Proof of Theorem \ref{thm:SPDEEx}]
Fix a stopping time $\tau\le T$.  By virtue of Theorem \ref{thm:diffeoandmomest}, we
only need to show that $Y^{-1}(\tau)=Y^{-1}_t(\tau,x)$ solves \eqref{eq:SPDEIntro} and that this is the unique solution.
Suppose we have shown $Y^{-1}(s,x)$, $s\in [0,T]$, solves \eqref{eq:SPDEIntro} (i.e. where $\tau$ is deterministic). 
It is then straightforward to conclude that $Y^{-1}(\tau')$ solves \eqref{eq:SPDEIntro}  for a  finite-valued stopping times $\tau'$.  We can then use an approximation argument (see the proof of Proposition \ref{prop:homeomorphism}) to show that $Y^{-1}(\tau)=Y^{-1}_t(\tau,x)$ solves \eqref{eq:SPDEIntro}. Thus, it suffices to take  $\tau$ deterministic.  Let $
u_t(x)$ $=u_t(s,x)$ $=Y_t^{-1}(s,x)$, $(s,t,x)\in [0,T]^2\times\mathbf{R}^d$. Fix  $(s,t,x)\in [0,T]^2\times\mathbf{R}^d$ with $s<t$ and write $Y_t(x)=Y_t(s,x)$.  We will treat a general stopping time $\tau\le T$ later.  Let $((t^M_n)_{0\le n\le M})_{1\le M\le \infty}$ be a sequence of partitions of the interval $%
[s,t]$ such that for each $M>0$, $(t^M_n)_{0\le n\le M}$ has mesh size $%
(t-s)/M$.  Fix $M$ and set $(t_n)_{0\le n\le
M}=(t^M_n)_{0\le n\le M}$. Immediately, we obtain
 \begin{equation}  \label{eq:Telescoping Sum}
u_t(x)-x=\sum_{n=0}^{M-1}(u_{t_{n+1}}(x)-u_{t_{n}}(x)).
\end{equation}%
We will use Taylor's theorem to expand each term in the sum on the right-hand-side of \eqref{eq:Telescoping Sum}. By Taylor's theorem, for each $n$ and $y$, we have
\begin{gather}
u_{t_{n+1}}(Y_{t_{n+1}}(y))-u_{t_n}(Y_{t_{n+1}}(y))=y-u_{t_n}(Y_{t_{n+1}}(y))=u_{t_n}(Y_{t_n}(y))-u_{t_n}(Y_{t_{n+1}}(y))\notag \\  \label{eq:Taylor for u at X}
=\nabla  u_{t_n}(Y_{t_n}(y))(Y_{t_n}(y)-Y_{t_{n+1}}(y))-(Y_{t_n}(y)-Y_{t_{n+1}}(y))^*\Theta_n(Y_{t_n}(y))(
Y_{t_n}(y)-Y_{t_{n+1}}(y)),
\end{gather}%
where 
$$
\Theta_n^{ij}(z)=\int_{0}^{1}(1-\theta )\partial_{ij}u_{t_n}\left(z+\theta
(Y_{t_{n+1}}(Y_{t_n}^{-1}(z))-z)\right)d\theta .
$$
Since for each $n$,
$
Y_{t_{n+1}}(s,x)=Y_{t_{n+1}}(t_n,Y_{t_n}(s,x)),
$
we have
$$
Y_{t_{n+1}}(Y_{t_{n}}^{-1}( x))=Y_{t_{n+1}}(t_{n},x)
$$
and hence substituting $y=Y_{t_{n}}^{-1}(x)$ into \eqref{eq:Taylor for u at X}, for each $n$, we get
\begin{equation}\label{eq:firstexpansion}
u_{t_{n+1}}(x)-u_{t_{n}}(x)=A_{n}+B_{n},
\end{equation}%
where 
$$
A_{n}:=\nabla u_{t_{n}}(x)(x-Y_{t_{n+1}}(t_{n},x))-(
x-Y_{t_{n+1}}(t_{n},x))^*\Theta^{ij}_n(x)(
x-Y_{t_{n+1}}(t_{n},x))
$$
and
$$
B_{n}:= (u_{t_{n+1}}(x)-u_{t_{n}}(x)) -(
u_{t_{n+1}}(Y_{t_{n+1}}(t_{n},x))-u_{t_{n}}(Y_{t_{n+1}}(t_{n},x))).
$$
Applying Taylor's theorem once more, for each $n$, we obtain 
\begin{equation}\label{eq:secondexpansion}
B_{n}=C_{n}+D_{n},
\end{equation}%
where 
$$
C_{n}:=(\nabla u_{t_{n+1}}(x)-\nabla u_{t_{n}}(x))(x-Y_{t_{n+1}}(t_{n},x)),
$$
$$
D_{n}:=-( x-Y_{t_{n+1}}(t_{n},x))^*\tilde{\Theta}_{n}(x)(
x-Y_{t_{i+1}}(t_{i},x))) ,
$$
and 
$$
\tilde{\Theta}_{n}(x)^{ij}:=\int_{0}^{1}(1-\theta )\partial_{ij}(u_{t_{n+1}}-u_{t_{n}})(x+\theta
(Y_{t_{n+1}}(t_{n},x)-x))d\theta.
$$
Thus, combining \eqref{eq:Telescoping Sum}, \eqref{eq:firstexpansion}, and \eqref{eq:secondexpansion},   $\bP$-a.s.\ we have
\begin{equation}\label{eq:expansion of u}
u_t(x)-x=
\sum_{n=0}^{M-1}(A_n+C_n+D_n).
\end{equation}%
Now, we will derive the limit  of the  right-hand-side of \eqref{eq:expansion of u}. 
\begin{claim}
\label{claim:Continuous SDE Limiting Procedure}
\begin{tightenumerate}
\item 
\begin{align*}
d\bP-\lim_{M\rightarrow\infty} \sum_{n=0}^{M-1}A_{n}&
=-\int_{]s,t]}[\frac{1}{2}\sigma ^{i\varrho}_r(x)
\sigma_r ^{j\varrho}(x)\partial_{ij}u_r(x)+b^{i}_r(x)\partial_iu_r(x)]dr \\
&\quad -\int_{]s,t]}\sigma_r^{i\varrho}
(x)\partial_iu_r(x)dw^{\varrho}_{r};
\end{align*}
\item$d\bP-\lim_{M\rightarrow\infty}\sum_{n=0}^{M-1}D_{n}=0;$
\item 
$
d\bP-\lim_{M\rightarrow\infty}\sum_{n=0}^{M-1}C_{n}=\int_{]s,t]}\sigma
^{j\varrho}_r(x)\partial_j \sigma^{i\varrho}_r(x)\partial_iu_r(x)dr+\int_{]s,t]}\sigma
^{i\varrho}_r(x) \sigma ^{j\varrho}_r(x)\partial_{ij}u_r(x)dr.
$
\end{tightenumerate}
\end{claim}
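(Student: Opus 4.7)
My plan is to treat the three limits one at a time, reducing each to a Riemann-sum or It\^o-isometry computation in which the ``integrand'' is an $\cF_{t_n}$-measurable factor multiplied by an increment over $[t_n,t_{n+1}]$. The essential prerequisite is the time-continuity of $u_r$, $\nabla u_r$, and $\nabla^2 u_r$ locally uniformly in $x$: under Assumption \ref{asm:propflowregwcorrec}$(\beta)$ with $\beta>2$, Theorem \ref{thm:diffeoandmomest}(2) applied with $H\equiv 0$ gives $u_{\cdot}(\cdot,\cdot)\in C([0,T]^2;\cC^{\beta'}_{loc}(\bR^d;\bR^d))$ for some $\beta'\in (2,\beta)$, while Lemma \ref{lem:Direct Flow Estimates} provides uniform-in-$r$ $L^p$ bounds on $Y_r$, $\nabla Y_r$, and $[\nabla Y_r]^{-1}$.

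For part (1), I would substitute
\[
x-Y_{t_{n+1}}(t_n,x)=-\int_{t_n}^{t_{n+1}}b_r(Y_r(t_n,x))\,dr-\int_{t_n}^{t_{n+1}}\sigma^{\varrho}_r(Y_r(t_n,x))\,dw^{\varrho}_r
\]
into the linear part $\nabla u_{t_n}(x)(x-Y_{t_{n+1}}(t_n,x))$ of $A_n$. The drift piece is a Riemann sum converging to $-\int_{]s,t]}\nabla u_r(x)b_r(x)\,dr$; the martingale piece converges in $L^2$ to $-\int_{]s,t]}\nabla u_r(x)\sigma^{\varrho}_r(x)\,dw^{\varrho}_r$ via It\^o isometry, the adaptedness of $\nabla u_{t_n}(x)$ to $\cF_{t_n}$, and the uniform convergence $Y_r(t_n,x)\to x$. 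For the quadratic part of $A_n$, continuity of $\partial_{ij}u_{t_n}$ in space together with $Y_{t_{n+1}}(Y_{t_n}^{-1}(x))\to x$ yields $\Theta_n^{ij}(x)\to \tfrac{1}{2}\partial_{ij}u_{t_n}(x)$, while the standard quadratic-variation computation gives
\[
\sum_{n=0}^{M-1}(x-Y_{t_{n+1}}(t_n,x))^i(x-Y_{t_{n+1}}(t_n,x))^j \;\longrightarrow\;\int_{s}^{t}\sigma^{i\varrho}_r(x)\sigma^{j\varrho}_r(x)\,dr
\]
in probability. Part (2) is then short: time-continuity of $r\mapsto \partial_{ij}u_r$ gives $\sup_n\sup_{|y|\le R}|\partial_{ij}(u_{t_{n+1}}-u_{t_n})(y)|\to 0$ in probability, and $\bE\sum_n|x-Y_{t_{n+1}}(t_n,x)|^2$ is bounded uniformly in $M$ by Lemma \ref{lem:Direct Flow Estimates}, so a dominated-convergence argument gives $\sum_n D_n\to 0$ in probability.

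Part (3) is the heart of the matter. The key observation is the flow identity $u_{t_{n+1}}(Y_{t_{n+1}}(t_n,z))=u_{t_n}(z)$; differentiating in $z$ yields
\[
\nabla u_{t_{n+1}}(Y_{t_{n+1}}(t_n,x))=\nabla u_{t_n}(x)[\nabla Y_{t_{n+1}}(t_n,x)]^{-1},
\]
which enables the decomposition $\nabla u_{t_{n+1}}(x)-\nabla u_{t_n}(x)=\mathcal{S}_n+\mathcal{T}_n$ with
\begin{align*}
\mathcal{S}_n&:=\nabla u_{t_{n+1}}(x)-\nabla u_{t_{n+1}}(Y_{t_{n+1}}(t_n,x)),\\
\mathcal{T}_n&:=\nabla u_{t_n}(x)\bigl([\nabla Y_{t_{n+1}}(t_n,x)]^{-1}-I\bigr).
\end{align*}
A first-order Taylor expansion of $\mathcal{S}_n$ in its spatial argument, multiplied by $(x-Y_{t_{n+1}}(t_n,x))$, produces the quadratic form $\partial_{ij}u_{t_{n+1}}(x)(x-Y_{t_{n+1}}(t_n,x))^i(x-Y_{t_{n+1}}(t_n,x))^j$ plus a Taylor remainder of order $|x-Y_{t_{n+1}}(t_n,x)|^3$; repeating the quadratic-variation argument of part (1) and summing gives $\int_{]s,t]}\sigma^{i\varrho}_r(x)\sigma^{j\varrho}_r(x)\partial_{ij}u_r(x)\,dr$. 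For $\mathcal{T}_n$, the Neumann expansion $(I+A)^{-1}-I=-A+A^2-\cdots$ together with the SDE \eqref{eq:GradientofFlow} for $\nabla Y_r(t_n,x)$ isolates the leading martingale contribution $-\int_{t_n}^{t_{n+1}}\nabla\sigma^{\varrho}_r(Y_r(t_n,x))\nabla Y_r(t_n,x)\,dw^{\varrho}_r$. Its It\^o cross-variation with the martingale part $-\int_{t_n}^{t_{n+1}}\sigma^{\varrho}_r(Y_r(t_n,x))\,dw^{\varrho}_r$ of $(x-Y_{t_{n+1}}(t_n,x))$, multiplied by the $\cF_{t_n}$-measurable factor $\nabla u_{t_n}(x)$ and summed, produces
\[
\int_{]s,t]}\nabla u_r(x)\nabla\sigma^{\varrho}_r(x)\sigma^{\varrho}_r(x)\,dr=\int_{]s,t]}\sigma^{j\varrho}_r(x)\partial_j\sigma^{i\varrho}_r(x)\partial_iu_r(x)\,dr.
\]

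The main obstacle is the error bookkeeping in part (3): the drift-times-increment terms, the higher-order Neumann remainders, the $\mathcal{S}_n$ Taylor remainder, and the cross products of distinct Wiener components must all be shown to vanish in $L^2$ as $M\to\infty$. This will follow by combining the uniform-in-$r$ $L^p$ bounds on $\nabla Y_r$, $[\nabla Y_r]^{-1}$, $\nabla u_r$, and $\nabla^2 u_r$ from Theorem \ref{thm:diffeoandmomest} and Lemma \ref{lem:Direct Flow Estimates} with $\sum_n(t_{n+1}-t_n)^2\to 0$, the Burkholder-Davis-Gundy inequality, and the orthogonality of Wiener increments across disjoint intervals.
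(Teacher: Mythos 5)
Your outline follows the paper's telescoping-and-Taylor strategy quite closely, and the overall structure of all three parts is sound: the decomposition of $A_n$ into a first-order stochastic-integral piece plus a quadratic-variation piece, the vanishing of $D_n$ via time-continuity of $\nabla^2 u$, and the flow-identity decomposition of $C_n$ producing two quadratic-variation-type contributions. Your part (3) split $\nabla u_{t_{n+1}}(x)-\nabla u_{t_n}(x)=\mathcal{S}_n+\mathcal{T}_n$ is a slight variant of the paper's $E_n+F_n$ split (the paper uses $\nabla u_{t_{n+1}}(x)=\nabla u_{t_n}(Y_{t_{n+1}}^{-1}(t_n,x))\,\nabla Y_{t_{n+1}}^{-1}(t_n,x)$ and Taylor-expands $\nabla u_{t_n}$ around $x$, whereas you Taylor-expand $\nabla u_{t_{n+1}}$ around $Y_{t_{n+1}}(t_n,x)$ and use the inverse Jacobian $[\nabla Y_{t_{n+1}}(t_n,x)]^{-1}$ directly); both are correct and produce the same two integrals.

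The one substantive weakness is the Neumann-series step. You propose to expand $[\nabla Y_{t_{n+1}}(t_n,x)]^{-1}-I=-A+A^2-\cdots$ with $A=\nabla Y_{t_{n+1}}(t_n,x)-I$, but this series only converges where $|A|<1$; $\nabla Y$ is controlled in $L^p$ but not in $L^\infty$, so you would have to introduce a truncation (restricting to the high-probability event $\{\sup_n|\nabla Y_{t_{n+1}}(t_n,x)-I|\le \tfrac12\}$, say) and handle the bad event separately, and you would also have to sum the infinitely many higher-order remainders. The paper sidesteps this entirely: by It\^o's formula (cf. equation \eqref{eq:Inversegradient} and the explicit display in the proof), $[\nabla Y_{t_{n+1}}(t_n,x)]^{-1}$ satisfies a closed linear SDE with a single martingale term $-\int_{]t_n,t_{n+1}]}\nabla Y_r(t_n,x)^{-1}\nabla\sigma_r^{\varrho}(Y_r(t_n,x))\,dw_r^{\varrho}$ and a single absolutely-continuous term, giving an exact finite decomposition with remainders that are readily bounded in $L^p$ via the Burkholder--Davis--Gundy inequality and \eqref{ineq:momentestgradinvbd}, \eqref{ineq:momentestgradinvdiff}. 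You should replace the Neumann expansion with this exact It\^o representation; the rest of your bookkeeping plan (BDG, H\"older, orthogonality of Wiener increments over disjoint intervals, and the $C([0,T]^2;\cC_{loc}^{\beta'})$ regularity of $u$ from Theorem \ref{thm:diffeoandmomest}(2)) is exactly what the paper uses and would then go through.
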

\begin{proof}[Proof of Claim \protect\ref{claim:Continuous SDE Limiting
Procedure}]
(1) For each $n$, we have%
\begin{align*}
\nabla u_{t_{n}}(x)\left( x-Y_{t_{n+1}}(t_{n},x)\right)& =-\int_{]t_{n},t_{n+1}]}b^i_r(x)
\partial_i u_{t_{n}}(x)dr-\int_{]t_{n},t_{n+1}]}\sigma^{i\varrho}_r (x)\partial_i u_{t_{n}}(x)dw^{\varrho}_{r}\\&\quad +R^{(1)}_n+R^{(2)}_n,
\end{align*}%
where 
\begin{equation*}
R^{(1)}_n:=\int_{]t_{n},t_{n+1}]}\left(b^i_r(x)-b^i_r(Y_{r}(t_{n},x))\right)\partial_i u_{t_{n}}(x)dr
\end{equation*}
and
\begin{equation*}
R^{(2)}_n:=\int_{]t_{n},t_{n+1}]}[\sigma_r^{i\varrho}
(x)-\sigma^{i\varrho}_r (Y_{r}(t_{n},x))]\partial_i u_{t_{n}}(x)dw^{\varrho}_{r}.
\end{equation*}
Since $b$ and $\sigma$ are Lipschitz, there is a constant $%
N=N(N_{0},T)$ such that 
$$
\sum_{n=0}^{M-1}\left \vert R^{1}_n\right\vert \leq N\sup_{s \le
r\leq t}|\nabla u_r(x)|\sup_{|r_1-r_2|\leq \frac{t}{M}}|x-Y_{r_1}(r_2,x)|
$$
and 
$$
\int_{]s,t]}\left\vert\sum_{n=0}^{M-1}\mathbf{1}
_{]t_{n},t_{n+1}]}(r)\left(\sigma_r^{i\cdot}
(x)-\sigma^{i\cdot}_r (Y_{r}(t_{n},x))\right)\partial_i u_{t_{n}}(x)\right\vert ^{2}ds
$$
$$
\leq N\sup_{s \le r\leq t}|\nabla u_r(x)|^2\sup_{|r_1-r_2|\leq \frac{t}{ M}%
}|x-Y_{r_1}(r_2,x)|^{2}.
$$
Owing to the joint continuity of $Y_t(s,x)$ in $s$ and $t$ and the dominated
convergence theorem for stochastic integrals, we obtain
\begin{equation}\label{eq:limitofremainder1}
d\bP-\lim_{M\rightarrow\infty}\sum_{n=0}^{M-1}(R^{(1)}_n+R^{(2)}_n)=0.
\end{equation}
In a similar way, this time using the continuity of $\nabla u_t(x)$ in $t$ and the linear growth of $b$ and $\sigma$, we get
$$
d\bP-\lim_{M\rightarrow\infty}\sum_{n=0}^{M-1}\left(-\int_{]t_{n},t_{n+1}]}b^i_r(x)
\partial_i u_{t_{n}}(x)dr-\int_{]t_{n},t_{n+1}]}\sigma^{i\varrho}_r (x)\partial_i u_{t_{n}}(x)dw^{\varrho}_{r}\right)
$$
$$
=-\int_{]s,t]}b_r(x)\partial_iu_r(x)dr-\int_{]s,t]}\sigma^{\varrho}
_r(x)\partial_iu_r(x)dw_{r}^{\varrho}.
$$
For each $n$, we have
\begin{equation*}
-(x-Y_{t_{n+1}}(t_{n},x))^* \Theta_n(x)(
x-Y_{t_{n+1}}(t_{n},x))=S^{(1)}_n+S^{(2)}_n,
\end{equation*}%
where $S^{(1)}_n(t,x)$ has only $drdr$ and $drdw_r^{\varrho}$ terms and where
\begin{align*}
S^{(2)}_n:&=-\frac{1}{2}\left( \int_{]t_{n},t_{n+1}]}\sigma^{i\varrho}_r
(Y_{r}(t_{n},x))dw_{r}^{\varrho}\right)\partial_{ij}u_{t_{n}}(x)\left(\int_{]t_{n},t_{n+1}]}\sigma^{j\varrho}_r
(Y_{r}(t_{n},x))dw_{r}^{\varrho}\right)\\
&\quad -\left(\int_{]t_{n},t_{n+1}]}\sigma^{i\varrho}_r (Y_{r}(t_{n},x))dw^{\varrho}_{r}\right)
\left(\Theta^{ij}_n(x)-\frac{1}{2}\partial_{ij}u_{t_{n}}(x)\right)
\left(\int_{]t_{n},t_{n+1}]}\sigma^{j\varrho}_r (Y_{r}(t_{n},x))dw^{\varrho}_{r}\right).
\end{align*}
Since
\begin{align*}
\left\vert \Theta^{ij}_n(x)-\frac{1}{2}
\partial_{ij}u_{t_{n}}(x)\right\vert
&=\left\vert \int_{0}^{1}(1-\theta )(\partial_{ij}u_{t_{n}}(x+\theta
(Y_{t_{n+1}}(t_{n},x)-x))-\partial_{ij}u_{t_{n}}(x))d\theta \right\vert\\
&\leq N\sup_{|r_1-r_2|\leq \frac{t}{M},\theta \in
(0,1)}|\partial_{ij}u_{r_{1}}(x+\theta
(Y_{r_2}(r_1,x)-x))-\partial_{ij}u_{r_{1}}(x))|,
\end{align*}
proceeding as in the derivation of \eqref{eq:limitofremainder1} and using   the joint continuity of $\partial_{ij}u_t(x)$ in $t$ and $x$,  the continuity of $Y_t(s,x)$ in $s$ and $t$, and standard properties of the stochastic
integral (i.e. Thm. 2 (5) in \cite{LiSh89} and the stochastic dominated convergence theorem), we obtain
\begin{equation*}
d\bP-\lim_{M\rightarrow\infty}\sum_{n=0}^{M-1} S^{(2)}_n =-\frac{1}{2}%
\int_{]0,t]}\sigma ^{i\varrho}_r(x) \sigma ^{j\varrho}_r(x)\partial_{ij}u_r(x)dr.
\end{equation*}
Similarly, by appealing to standard
properties of the stochastic integral and the properties stated in  Theorem \ref{thm:diffeoandmomest}(2), we have
$
d\bP-\lim_{M\rightarrow\infty}\sum_{n=0}^{M-1}S^{(1)}_n=0,
$
which completes the proof of part (1). The proof of part (2) is  similar to the proof of part (1), so we proceed to
the proof of  part (3). We know that for each $n$,
$
Y_{t_{n+1}}(x)=Y_{t_{n+1}}(t_n,Y_{t_n}(x)).
$
Thus, for each $n$, we have
$
u_{t_{n+1}}(x)=u_{t_n}(Y^{-1}_{t_{n+1}}(t_n,x)),
$
and hence by the chain rule,
\begin{equation}\label{eq:Inverse identity for Du}
\nabla  u_{t_{n+1}}(x)=\nabla  u_{t_n}(Y_{t_{n+1}}^{-1}(t_n,x))\nabla 
Y_{t_{n+1}}^{-1}(t_n,x).
\end{equation}%
By \eqref{eq:Inverse identity for Du} and Taylor's theorem, for each $n$, we get
\begin{gather*}
C_{n}=(\nabla  u_{t_{n+1}}(x)-\nabla u_{t_{n}}(x))(x-Y_{t_{n+1}}(t_{n},x))\\
=\nabla u_{t_n}(Y_{t_{n+1}}^{-1}(t_n,x))(\nabla 
Y_{t_{n+1}}^{-1}(t_n,x)-I_d)(x-Y_{t_{n+1}}(t_n,x))\\
+(Y_{t_{n+1}}^{-1}(t_n,x)-x)^*\tilde{\Theta}_n(x)(x-Y_{t_{n+1}}(t_n,x))=:E_{n}+F_{n},
\end{gather*}
where 
$$\tilde{\Theta}^{ij}_n(x):=\int_0^1 \partial_{ij}u_{t_n}(x+\theta(Y_{t_{n+1}}^{-1}(t_n,x)-x))d\theta.$$
By It\^{o}'s formula, for each $n$, we have (see, also, Lemma 3.12 in \cite{Ku04}),
\begin{align*}
\nabla Y_{t_{n+1}}(t_n,x)^{-1}&=I_d-\int_{]t_n,t_{n+1}]} \nabla Y_{r}(t_n,x)^{-1}\nabla \sigma_r^{\varrho}
  (Y_{r}(t_n,x))dw^{\varrho}_{r}\\
&\quad +\int_{]t_n,t_{n+1}]}\nabla Y_{r}(t_n,x)^{-1}\left(\nabla
  \sigma^{\varrho}_r(Y_{r}(t_n,y)) \nabla \sigma^{\varrho}_r (Y_{r}(t_n,x))-\nabla b_r(Y_{r}(t_n,x))\right)dr,
\end{align*}
and hence
 $$ \nabla Y_{t_{n+1}}^{-1}(t_n)-I_d=\nabla Y_{t_{n+1}}^{-1}(t_n,Y^{-1}_{t_{n+1}}(t_n,x))-I_{d}=:G_{t_n,t_{n+1}}^{(1)}(Y^{-1}_{t_{n+1}}(t_n,x))+G_{t_n,t_{n+1}}^{(2)}(Y^{-1}_{t_{n+1}}(t_n,x)),
 $$
where for $y\in\mathbf{R}^d$,
 $$
 G_{t_n,t_{n+1}}^{(1)}(y):=\int_{]t_n,t_{n+1}]} \nabla Y_{r}(t_n,z)^{-1}\left(\nabla
   \sigma_r^{\varrho}(Y_{r}(t_n,y)) \nabla \sigma_r ^{\varrho}(Y_{r}(t_n,y))-\nabla b_r(Y_{z}(t_n,y))\right)dr
 $$
 and
 $$
  G_{t_n,t_{n+1}}^{(2)}(z):=-\int_{]t_n,t_{n+1}]} \nabla Y_{r}(t_n,y)^{-1}\nabla \sigma_r^{\varrho}
  (Y_{r}(t_n,y))dw^{\varrho}_{r}.
 $$
 By the Burkholder-Davis-Gundy inequality, H\"{o}lder's inequality, and  the inequalities  \eqref{ineq:estdirectdifftposp}, \eqref{ineq:momentestgradinvbd}, and \eqref{ineq:momentestgradinvdiff}, for  each $p\geq 2$, there
 is a constant $N=N(p,d,N_{0},T)$ such that for all $x_1$ and $x_2$, 
 $$
 \bE\left[|G_{t_n,t_{n+1}}^{(2)}(x_1)|^p\right]\le NM^{-p/2+1}\int_{]t_n,t_{n+1}]}\bE\left[| \nabla Y_{r}(t_n,x_1)^{-1}|^p|\nabla \sigma_r
   (Y_{r}(t_n,x_1)|^p\right]dr\le N M^{-p/2}
 $$
 and
\begin{gather*}
 \bE\left[|G_{t_n,t_{n+1}}^{(2)}(x_1)-G_{t_n,t_{n+1}}^{(2)}(x_2)|^{p}\right]
 \leq N M^{-p/2+1}\int_{]t_n,t_{n+1}]}\bE\left[|\nabla Y_{r}(t_n,x_1)^{-1}-\nabla Y_{r}(t_n,x_2)^{-1}|^{p}\right]dr\\
 + NM^{-p/2+1}\int_{]t_n,t_{n+1}]}\left( \bE\left[|\nabla Y_{r}(t_n,x_1)^{-1}|^{2p}\right]\right)
 ^{1/2}\left( \bE\left[|Y_{r}(t_n,x_1)-Y_{r}(t_n,x_2)|^{2p}\right]\right) ^{1/2}dr\\
 \leq NM^{-p/2}|x-y|^{p}.
\end{gather*}
 Thus, by Corollary  \ref{cor:Kolmogorov Embedding}, we obtain that for all $p\ge 2$, $\epsilon>0$, and  $\delta <1$, there is a constant $N=N(p,d,\delta, N_{0},T)$ such that 
 \begin{equation}\label{ineq:Kolmogorov Theorem for SI Terms}
 \bE\left[|r^{-\epsilon}G_{t_n,t_{n+1}}^{(2)}|_{\delta }^{p}\right]\leq NM^{-p/2}.
 \end{equation}
For each $n$, we have 
  \begin{align*}
  E_{n}&=\nabla u_{t_n}(Y_{t_{n+1}}^{-1}(t_n,x))G_{t_n,t_{n+1}}^{(1)}(Y_{t_{n+1}}^{-1}(t_n,x))(x-Y_{t_{n+1}}(t_n,x))\\
  &\quad + \nabla u_{t_n}(x)
               G_{t_n,t_{n+1}}^{(2)}(x)
               (x-Y_{t_{n+1}}(t_n,x))\\
&\quad +\nabla u_{t_n}(Y_{t_{n+1}}^{-1}(t_n,x))(
  G_{t_n,t_{n+1}}^{(2)}(Y_{t_{n+1}}^{-1}(t_n,x))-G_{t_n,t_{n+1}}^{(2)}(x)%
  ) (x-Y_{t_{n+1}}(t_n,x))\\
   &\quad +  (\nabla u_{t_n}(Y_{t_{n+1}}^{-1}(t_n,x)-\nabla u_{t_n}(x))
        G_{t_n,t_{n+1}}^{(2)}(x)%
        (x-Y_{t_{n+1}}(t_n,x))
  \end{align*}
One can easily check that 
  \begin{equation}\label{eq:part1K} 
 d\bP-\lim_{M\rightarrow\infty}\sum_{n=0}^{M-1}\nabla u_{t_n}(Y_{t_{n+1}}^{-1}(t_n,x))G_{t_n,t_{n+1}}^{(1)}(Y_{t_{n+1}}^{-1}(t_n,x))(x-Y_{t_{n+1}}(t_n,x))=0.
  \end{equation}
Since $\nabla u_t(x)$ is jointly continuous in $t$ and $x$ and $Y^{-1}_t(s,x)$ is jointly in $s$ and $t$, 
we have
$$
 d\bP-\lim_{M\rightarrow\infty}\sup_n|\nabla u_{t_n^M}(Y_{t_{n+1}^M}^{-1}(t_n^M,x))-\nabla u_{t_n^M}(x)|=0.
$$
Moreover, using  H{\"o}lder's inequality, \eqref{ineq:Kolmogorov Theorem for SI Terms}, and \eqref{ineq:growthdirectposp}, we get
  $$
 \sup_M  \bE\sum_{n=0}^{M-1}|G_{t_n,t_{n+1}}^{(2)}(x)|x-Y_{t_{n+1}}(t_n,x)|<\infty,
  $$
  and hence
  \begin{equation}\label{eq:part2K} 
 d\bP-\lim_{M\rightarrow\infty}\sum_{n=0}^{M-1}(\nabla u_{t_n}(Y_{t_{n+1}}^{-1}(t_n,x))-\nabla u_{t_n}(x))
         G_{t_n,t_{n+1}}^{(2)}(x)%
         (x-Y_{t_{n+1}}(t_n,x))=0.
  \end{equation}
 We claim that  
 \begin{equation}\label{eq:part3K}
d\bP-\lim_{M\rightarrow\infty}\sum_{n=0}^{M-1}\nabla u_{t_n}(Y_{t_{n+1}}^{-1}(t_n,x))\left(
 G_{t_n,t_{n+1}}^{(2)}(Y_{t_{n+1}}^{-1}(t_n,x))-G_{t_n,t_{n+1}}^{(2)}(x)%
 \right) (x-Y_{t_{n+1}}(t_n,x))=0.
 \end{equation}
Set 
\begin{equation*}
J^M=\sum_{n=0}^{M-1}|
G_{t_n,t_{n+1}}^{(2)}(Y_{t_{n+1}}^{-1}(t_n,x))-G_{t_n,t_{n+1}}^{(2)}(x)|x-Y_{t_{n+1}}(t_n,x)|.
\end{equation*}
For each $\bar\delta ,\epsilon \in (0,1)$, we have 
$$
\bP(J^M>\bar \delta )\leq \bP\left(J^{M}>\bar \delta
,\;\max_{n }|Y_{t_{n+1}}^{-1}(t_n,x)-x|\leq \epsilon \right)+\bP\left(\max_{n}|Y_{t_{n+1}}^{-1}(t_n,x)-x|>\epsilon \right).
$$
By virtue of \eqref{ineq:Kolmogorov Theorem for SI Terms},    there is a deterministic constant $N=N(x)$ independent of $M$ such that  for all $\omega\in 
V^{M}:=\{\max_{n}|Y_{t_{n+1}}^{-1}(t_n,x)-x|\leq \epsilon \}$,
\begin{equation*}
J^{M}\leq N \epsilon ^{\delta }\sum_{n=0}^{M-1}[r_1^{-\epsilon}G_{t_n,t_{n+1}}^{(2)}]_{\delta}|x-Y_{t_{n+1}}(t_n,x)|,
\end{equation*}%
which implies that 
\begin{equation*}
\bE\mathbf{1} _{V^M}J^{M}\leq  N\epsilon ^{\delta
}\bE\sum_{n=0}^{M-1}\left([r_1^{-\epsilon}G_{t_n,t_{n+1}}^{(2)}]_{\delta}^{2}+
|x-Y_{t_{n+1}}(t_n,x)|^{2}\right)
\leq N\epsilon ^{\delta }\sum_{n=0}^{M-1}M^{-1}\leq N\epsilon ^{\delta }.
\end{equation*}%
Applying Markov's inequality, we get 
\begin{equation*}
\bP(J^{M}|>\bar \delta ,\;\max_{n}|Y_{t_{n+1}}^{-1}(t_n,x)-x|\leq
\epsilon )\leq N\frac{\epsilon ^{\delta }}{\bar\delta },
\end{equation*}%
and hence for all $\bar \delta>0$,
\begin{equation}\label{eq:Jmconverges}
\lim_{M\rightarrow \infty }\bP(J^{M}>\bar \delta )=0,
\end{equation}%
which yields \eqref{eq:part3K}.
Owing to \eqref{eq:part1K}, \eqref{eq:part2K}, and \eqref{eq:part3K}, we have
\begin{equation*}
d\bP-\lim_{M\rightarrow\infty}\sum_{n=0}^{M-1}E_{n}=\lim_{M\rightarrow
\infty }\sum_{n=0}^{M-1}\nabla
u_{t_n}(x)G_{t_n,t_{n+1}}^{(2)}(x)(x-Y_{t_{n+1}}(t_n,x)).
\end{equation*}%
Proceeding as in the proof of part (1) of the claim, we obtain 
\begin{gather}
\lim_{M\rightarrow \infty }\sum_{n=0}^{M-1}K_{n}\\
=\lim_{M\rightarrow \infty }\sum_{n=0}^{M-1}\nabla 
u_{t_n}(x)\int_{]t_n,t_{n+1}]}(\nabla Y_{r}(t_n,x)^{-1}-I_{d})\nabla \sigma^{\varrho}_r
(x)dW^{\varrho}_{r}\int_{]t_n,t_{n+1}]}\sigma^{\varrho}_r(x)dW^{\varrho}_{r}\\
+\lim_{M\rightarrow \infty }\sum_{n=0}^{M-1}\nabla
u_{t_n}(x)\int_{]t_n,t_{n+1}]}\nabla \sigma^{\varrho}_r(x)dw_{r}^{\varrho}\int_{]t_n,t_{n+1}]}\sigma^{\varrho}_r(x)dw^{\varrho}_{r}\\
\label{eq:limitofKn}
=\int_{]s,t]}\sigma
^{j\varrho}_r(x)\partial_j\sigma^{i\varrho}_r(x)\partial_iu_r(x)dr
\end{gather}
It is easy to check that for each $n$,
\begin{align*}
F_{n}&=(Y_{t_{n+1}}^{-1}(t_n,x)-x)^*\tilde{\Theta}_n(x)(x-Y_{t_{n+1}}(t_n,x))\\
&=:(G_{t_n,t_{n+1}}^{(3)}(Y^{-1}_{t_{n+1}}(t_n,x))+G_{t_n,t_{n+1}}^{(4)}(Y^{-1}_{t_{n+1}}(t_n,x)))^*\tilde{\Theta}_n(x)(x-Y_{t_{n+1}}(t_n,x)),
\end{align*}
where for $y\in\mathbf{R}^d$,
  $$
  G_{t_n,t_{n+1}}^{(3)}(y):=-\int_{]t_n,t_{n+1}]}b_r (Y_{r}(t_n,y))dr,\quad  G_{t_n,t_{n+1}}^{(4)}(y):=-\int_{]t_n,t_{n+1}]}\sigma^{\varrho}_r (Y_{r}(t_n,y))dw^{\varrho}_{r}.
  $$
Arguing as  in the proof of \eqref{eq:limitofKn}, we get
\begin{equation*}
d\bP-\lim_{M\rightarrow\infty}\sum_{n=0}^{M-1}F_{n}=\int_{]s,t]}\sigma ^{i\varrho}_r(x) \sigma ^{j\varrho}_r(x)\partial_{ij}u_r(x)dt,
\end{equation*}%
which completes the proof of the claim.
\end{proof}
By virtue of  \eqref{eq:expansion of u} and Claim \ref{claim:Continuous SDE Limiting Procedure}, for all $s$ and $t$ with $s\le t$ and $x$, $\bP$-a.s.\ 
\begin{equation}\label{eq:proofctspde}
u_t(x)=x+\int_{]s,t]} \left( \frac{1}{2}\sigma ^{i\varrho}_r(x) \sigma
^{j\varrho}(x)\partial_{ij}u_r(x)-\hat b^i_t(x)\partial_iu_r(x)\right)dr-\int_{]s,t]}\sigma ^{i\varrho}_r(x)\partial_iu_r(x)dw^{\varrho}_{r}.
\end{equation}
Owing to Theorem \ref{thm:diffeoandmomest},  $u=u_t(x)$ has a
modification that is jointly continuous in $s$ and $t $ and twice continuously
differentiable in $x$.  It is easy to check that the Lebesgue integral  on the right-hand-side of \eqref{eq:proofctspde}
has a modification that is continuous in $s$, $t$, and $x$.   Thus, the stochastic integral on the right-hand-side of  \eqref{eq:proofctspde} has a modification that
is continuous in $s,t,$ and $x$, and hence the equality in \eqref{eq:proofctspde}
holds $\bP$-a.s.\ for all  $s$ and $t$ with $s\le t$ and $x$.  This proves that   $Y^{-1}(\tau)=Y^{-1}_t(\tau,x)$ solves \eqref{eq:SPDEIntro}. However, if $u^1(\tau), u^2(\tau)\in \mathfrak{C}^{\beta'}_{cts}(\bR^d;\bR^d) $ are  solutions of \eqref{eq:SPDEIntro}, then applying the It\^o-Wentzell formula (see, e.g. Theorem 9 in Chapter 1, Section 4.8 in \cite{Ro90}), we get that $\bP$-a.s.\ for all $t$ and $x$, $$u^1_t(\tau,Y_t(\tau,x))=x=u_t^2(\tau,Y_t(\tau,x)),$$
which implies  that $\bP$-a.s.\ for all $t$ and $x$, $u^1(\tau)=Y_t^{-1}(\tau,x)=u^2(\tau)$.  Thus, $Y^{-1}(\tau)=Y^{-1}_t(\tau,x)$ is the unique solution of  \eqref{eq:SPDEIntro} in $\mathfrak{C}^{\beta'}_{cts}(\bR^d;\bR^d).$
\end{proof}

\section{Appendix}
Let $V$ be an arbitrary Banach space.  The following lemma and its corollaries are indispensable in this paper.

\begin{lemma}\label{lem:Kembedding}
Let $Q\subseteq \mathbf{R}^{d}$ be
an open bounded cube, $p\geq 1$,  $\delta \in (0,1]$, \ and $f$ be a
$V$-valued integrable function on $Q$ such that 
\begin{equation*}
\left[ f\right] _{\delta;p;Q;V}:=\left( \int_{Q}\int_{Q}\frac{%
|f(x)-f(y)|_{V}^{p}}{|x-y|^{2d+\delta p}}dxdy\right) ^{1/p}<\infty .
\end{equation*}%
Then $f$ has a $\cC^{\delta }(Q;V)$-modification and there is a constant $%
N=N(d,\delta,p )$ independent of $f$ and $Q$ such that%
\begin{equation*}
[f]_{\delta ;Q;V}\leq N\left[ f\right] _{\delta ,p;Q;V}
\end{equation*}%
and
\begin{equation*}
\sup_{x\in Q}|f(x)| _{V}\leq N| Q|
^{\delta /d}[f] _{\delta;p;Q;V}+|Q|
^{-1/p}\left( \int_{Q}|f(x)| _{V}^{p}dx\right) ^{1/p},
\end{equation*}%
where $|Q|$ is the volume of the cube.  
\end{lemma}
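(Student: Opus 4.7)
The statement is the classical Gagliardo--Slobodecki\u\i/Campanato embedding: the fractional Sobolev seminorm controls the H\"older seminorm whenever $\delta > 0$ (equivalently $s=\delta+d/p>d/p$). The plan is to carry out the standard averaging/chaining argument in a way that is uniform in $Q$ and valid for $V$-valued functions, noting that none of the steps use any Hilbert or reflexivity structure of $V$: only the triangle inequality and scalar Lebesgue/H\"older inequalities applied to $|f(\cdot)-f(\cdot)|_V$ are used.

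For a subcube $B \subseteq Q$ of side $s$, write $f_B=|B|^{-1}\int_B f(z)\,dz$ (a Bochner integral in $V$). The key inequality is
\begin{equation*}
|f_B - f_{B'}|_V \;\leq\; N\, s^{\delta}\,[f]_{\delta,p;B;V}
\end{equation*}
whenever $B'\subset B$ has side $\geq s/2$. Indeed,
\begin{equation*}
|f_B-f_{B'}|_V \leq \frac{1}{|B||B'|}\int_B\!\!\int_{B'}|f(z)-f(w)|_V\,dz\,dw,
\end{equation*}
and H\"older's inequality with exponents $p$ and $p/(p-1)$ gives
\begin{equation*}
\int_B\!\!\int_B |f(z)-f(w)|_V\,dw\,dz \;\leq\; [f]_{\delta,p;B;V}\left(\int_B\!\!\int_B |z-w|^{(2d+\delta p)/(p-1)}dw\,dz\right)^{(p-1)/p}.
\end{equation*}
The second factor is bounded by $N s^{2d(p-1)/p+(2d+\delta p)/p}$, and after dividing by $|B||B'|\asymp s^{2d}$ the $s$-powers collapse to $s^{\delta}$, as required. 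This is the only non-trivial calculation; everything else is bookkeeping.

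For each $x\in Q$ and any initial cube $B_0\ni x$ with $B_0\subseteq Q$, let $B_k$ be a nested sequence of cubes containing $x$ with sides $s/2^k$. The key inequality yields $|f_{B_{k+1}}-f_{B_k}|_V\leq N(s/2^k)^{\delta}[f]_{\delta,p;Q;V}$, which is summable in $k$; hence $(f_{B_k})$ is Cauchy in $V$ with limit $\tilde f(x)$, and Lebesgue's differentiation theorem identifies $\tilde f=f$ a.e. The same chaining argument shows
\begin{equation*}
|\tilde f(x)-f_{B_0}|_V \;\leq\; N\,s^{\delta}\,[f]_{\delta,p;Q;V},
\end{equation*}
and a parallel symmetric argument applies to $y$. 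Taking $s=|x-y|$ and $B_0$ any cube of that side containing both $x$ and $y$ inside $Q$ (if $|x-y|$ is comparable to the side of $Q$ take $B_0=Q$, which only worsens the constant by an absolute factor since then $s\asymp |Q|^{1/d}$), we obtain
\begin{equation*}
|\tilde f(x)-\tilde f(y)|_V \leq |\tilde f(x)-f_{B_0}|_V + |\tilde f(y)-f_{B_0}|_V \leq N|x-y|^{\delta}[f]_{\delta,p;Q;V}.
\end{equation*}
This simultaneously proves the existence of the $\mathcal{C}^{\delta}(Q;V)$-modification and the H\"older seminorm bound.

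For the supremum estimate, take $B_0=Q$: the chaining gives $|\tilde f(x)-f_Q|_V \leq N|Q|^{\delta/d}[f]_{\delta,p;Q;V}$, while H\"older's inequality on the average gives $|f_Q|_V \leq |Q|^{-1/p}(\int_Q|f|_V^p\,dx)^{1/p}$; the two estimates combine to the stated bound. The main obstacle throughout is the Hölder-exponent bookkeeping in the key inequality above and ensuring that the argument gives a global (continuous) representative rather than just a.e.\ control, which is handled by performing the chaining from a common enclosing cube before specializing to two points.
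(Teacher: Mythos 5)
Your proof is correct but proceeds by a genuinely different route than the paper. The paper dispatches the H\"older seminorm bound $[f]_{\delta;Q;V}\le N[f]_{\delta,p;Q;V}$ by pointing to Lemma 2 and Exercise 5 in Section 10.1 of Krylov's book for the scalar case and remarking the Banach-valued extension is identical; the only argument the paper actually spells out is the three-line derivation of the supremum bound, which already \emph{uses} the H\"older seminorm bound via $|f(x)-f(y)|_V\le N[f]_{\delta,p;Q}|x-y|^\delta$ inside an average over $y\in Q$. You instead give a self-contained Campanato-style proof: the oscillation-of-averages inequality $|f_B-f_{B'}|_V\le N s^{\delta}[f]_{\delta,p;B;V}$, dyadic chaining of nested cubes to produce the continuous representative $\tilde f$ via Cauchy sequences in $V$ (identified with $f$ a.e.\ by the Bochner--Lebesgue differentiation theorem), and then the H\"older and supremum estimates both falling out of the telescoped chain. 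The exponent bookkeeping in your key inequality checks out (the powers of $s$ collapse to $s^{\delta}$), the uniformity in $Q$ and the Banach-valued reduction are handled correctly, and the boundary case $|x-y|\asymp|Q|^{1/d}$ is correctly absorbed into the constant by falling back to $B_0=Q$. What your approach buys is full self-containment and an explicit mechanism for constructing the continuous modification rather than deferring it; what the paper's approach buys is brevity, since it leans on a textbook reference for the hard half and supplies only the easy supremum step. One small caveat worth acknowledging in a final write-up: your H\"older-duality computation with conjugate exponent $p/(p-1)$ is formal at $p=1$ and should be replaced there by the $L^1\times L^\infty$ pairing $\int\!\int|f(z)-f(w)|_V\,dz\,dw\le[f]_{\delta,1;B;V}\sup_{z,w\in B}|z-w|^{2d+\delta}$, which gives the same power of $s$.
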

\begin{proof}
If $V=\mathbf{R}$, then the existence of a continuous modification of $f$
and the estimate of $\left[ f\right] _{\delta ;Q}$ follows from Lemma 
2  and Exercise 5 in Section 10.1 in \cite{Kr08}. The proof for a
general Banach space is the same. For all $x\in Q$, we have 
\begin{align*}
|f(x)|_{V} &\leq \frac{1}{|Q|}\int_{Q}|
f(x)-f(y)|_{V}dy+\frac{1}{|Q|}%
\int_{Q}| f(y)|_{V}dy \\
&\leq N\frac{1}{|Q|}\left[ f\right] _{\delta
,p;Q}\int_{Q}|x-y|^{\delta }dy+\frac{1}{|
Q|}\int_{Q}| f(y)|_V dy \\
&\leq N|Q|^{\delta /d}[f] _{\delta
,p;Q}+| Q|^{-1/p}\left( \int_{Q}|
f(y)| _{V}^{p}dy\right) ^{1/p},\end{align*}%
which proves the second estimate.
\end{proof}
The following is a direct consequence of Lemma \ref{lem:Kembedding}.
\begin{corollary}\label{cor:SobolevFull}Let $p\geq 1$, $\delta \in (0,1]$, and $f$ be a
$V$-valued function on $\mathbf{R}^d$ such that 
\begin{equation*}
|f| _{\delta ;p;V}:=\left(\int_{\mathbf{R}^d}|f(x)|^p_Vdx+ \int_{|x-y|<1}\frac{%
|f(x)-f(y)|_{V}^{p}}{|x-y|^{2d+\delta p}}dxdy\right) ^{1/p}<\infty .
\end{equation*}%
Then $f$ has a $\cC^{\delta }(\mathbf{R}^d;V)$-modification and there is a constant $%
N=N(d,\delta, p)$ independent of $f$ such that%
\begin{equation*}
| f| _{\delta;V}\leq N| f| _{\delta ;p;V}.
\end{equation*}%
\end{corollary}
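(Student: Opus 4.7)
The plan is to pass from the local estimate in Lemma \ref{lem:Kembedding} to a global one by covering $\mathbf{R}^d$ by a countable overlapping family of unit cubes, applying the lemma on each, and then patching.

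First, I would fix an overlapping cover $\{Q_k\}_{k\in\bZ^d}$ where, for each $k\in\bZ^d$, $Q_k$ is the open cube of side length $2$ centered at $k$. This choice has two convenient features: the cubes have a common volume $|Q_k|=2^d$ (so the bounds from Lemma \ref{lem:Kembedding} are uniform in $k$), and for every pair $x,y\in\bR^d$ with $|x-y|<1$ there exists some $k$ such that both $x$ and $y$ lie in $Q_k$. Apply Lemma \ref{lem:Kembedding} to $f$ restricted to each $Q_k$ with the given $p$ and $\delta$; since
\begin{equation*}
\int_{Q_k}\int_{Q_k}\frac{|f(x)-f(y)|_V^p}{|x-y|^{2d+\delta p}}\,dx\,dy\leq \int_{|x-y|<2\sqrt d}\int_{\bR^d}\frac{|f(x)-f(y)|_V^p}{|x-y|^{2d+\delta p}}\,dx\,dy\leq N|f|_{\delta;p;V}^p,
\end{equation*}
we obtain a $\cC^\delta(Q_k;V)$-modification $\tilde f_k$ of $f|_{Q_k}$ together with uniform bounds on $[\tilde f_k]_{\delta;Q_k;V}$ and $\sup_{x\in Q_k}|\tilde f_k(x)|_V$, each controlled by a constant multiple of $|f|_{\delta;p;V}$.

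Since any two $\tilde f_k,\tilde f_{k'}$ are continuous modifications of the same function on $Q_k\cap Q_{k'}$, they coincide pointwise there, and one may glue them into a single continuous map $\tilde f:\bR^d\to V$ that is a modification of $f$. The sup norm bound $|\tilde f|_0\leq N|f|_{\delta;p;V}$ is inherited immediately from the uniform estimate on each $Q_k$.

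For the Hölder seminorm $[\tilde f]_{\delta;V}$, I split into two cases. If $|x-y|<1$, both $x$ and $y$ sit in a common cube $Q_k$ by the choice of cover, so the local Hölder estimate from Lemma \ref{lem:Kembedding} gives $|\tilde f(x)-\tilde f(y)|_V\leq N|f|_{\delta;p;V}|x-y|^\delta$ with a constant independent of $k$. If $|x-y|\geq 1$, then $|x-y|^\delta\geq 1$ and
\begin{equation*}
\frac{|\tilde f(x)-\tilde f(y)|_V}{|x-y|^\delta}\leq 2|\tilde f|_0\leq N|f|_{\delta;p;V}.
\end{equation*}
Combining these two estimates with the sup norm bound yields $|\tilde f|_{\delta;V}=|\tilde f|_0+[\tilde f]_{\delta;V}\leq N|f|_{\delta;p;V}$, which is the claim.

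There is no real obstacle here; the only point requiring a small amount of care is the choice of cover so that the \emph{local} Hölder estimate from Lemma \ref{lem:Kembedding} covers all pairs within distance one, which is why I use cubes of side $2$ centered at the integer lattice rather than a disjoint partition.
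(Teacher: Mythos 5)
Your proof is correct, and it supplies exactly the details the paper leaves out: the authors only remark that the corollary is ``a direct consequence of Lemma~\ref{lem:Kembedding},'' and the intended route is precisely the one you take -- cover $\mathbf{R}^d$ by overlapping unit-scale cubes, invoke the local lemma on each cube with uniform constants, glue the continuous modifications, and treat pairs $|x-y|\geq 1$ trivially via the sup-norm bound. The one small technical point you handle well is noting that the Gagliardo-type seminorm over each $Q_k$ is controlled by $|f|_{\delta;p;V}$ even though the diagonal strip in the hypothesis has width $1$ rather than $2\sqrt d$, since the contribution from $1\leq|x-y|<2\sqrt d$ is absorbed into the $L^p$-norm term.
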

\begin{corollary}
\label{cor:Kolmogorov Embedding} Let $X$ be a $V$-valued random field
defined on $\mathbf{R}^{d}$. Assume that for some $p\geq 1$, $l\ge 0,$ and $%
\beta \in (0,1]$ with $\beta p>d$ there is a constant $\bar{N}>0$ such
that for all $x,y\in \mathbf{R}^{d}$, 
\begin{equation}
\bE\left[|X(x)|_{V}^{p}\right]\leq \bar{N}r_{1}(x)^{lp}  \label{ineq:growthsob}
\end{equation}%
and 
\begin{equation}
\bE\left[|X(x)-X(y)|_{V}^{p}\right]\leq \bar{N}[r_{1}(x)^{lp}+r_1(y)^{lp}]|x-y|^{\beta p}.
\label{ineq:diffsob}
\end{equation}%
Then for any $\delta \in (0,\beta -\frac{d}{p})$ and $\epsilon >\frac{d}{p}$%
, there exists a $\cC^{\delta }(\mathbf{R}^{d};V)$-modification of $%
r_{1}^{-(l+\epsilon )}X$ and a constant $N=N(d,p,\delta ,\epsilon )$ such
that 
\begin{equation*}
\bE\left[|r_{1}^{-(l+\epsilon )}X|_{\delta }^{p}\right]\leq N\bar{N}.
\end{equation*}
\end{corollary}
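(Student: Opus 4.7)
The plan is to apply Corollary \ref{cor:SobolevFull} pathwise to the weighted field $Y(x)=r_{1}(x)^{-(l+\epsilon)}X(x)$, and then take expectations. Concretely, since Corollary \ref{cor:SobolevFull} gives the deterministic inequality $|Y|_{\delta;V}\le N|Y|_{\delta;p;V}$ with
$$
|Y|_{\delta;p;V}^{p}=\int_{\mathbf{R}^{d}}|Y(x)|_{V}^{p}\,dx+\int_{|x-y|<1}\frac{|Y(x)-Y(y)|_{V}^{p}}{|x-y|^{2d+\delta p}}\,dx\,dy,
$$
it suffices, by Fubini's theorem, to bound the expectation of each of these two integrals by $N\bar{N}$.

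For the first integral, I would use the growth hypothesis \eqref{ineq:growthsob} to obtain
$$
\bE\int_{\mathbf{R}^{d}}|Y(x)|_{V}^{p}\,dx\le \bar{N}\int_{\mathbf{R}^{d}}r_{1}(x)^{-(l+\epsilon)p}r_{1}(x)^{lp}\,dx=\bar{N}\int_{\mathbf{R}^{d}}r_{1}(x)^{-\epsilon p}\,dx,
$$
which is finite because $\epsilon p>d$. For the oscillation integral, I would split via the triangle inequality into
$$
|Y(x)-Y(y)|_{V}\le r_{1}(x)^{-(l+\epsilon)}|X(x)-X(y)|_{V}+\bigl|r_{1}(x)^{-(l+\epsilon)}-r_{1}(y)^{-(l+\epsilon)}\bigr||X(y)|_{V}.
$$
Two standard observations make the resulting integrals tractable: first, for $|x-y|<1$ the weights are comparable, $r_{1}(x)\asymp r_{1}(y)$; second, since $\nabla r_{1}^{-\alpha}$ is bounded by $\alpha r_{1}^{-\alpha-1}$, one has $|r_{1}(x)^{-(l+\epsilon)}-r_{1}(y)^{-(l+\epsilon)}|\le Nr_{1}(x)^{-(l+\epsilon+1)}|x-y|$ on $|x-y|<1$. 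Combined with \eqref{ineq:diffsob} and \eqref{ineq:growthsob}, these yield upper bounds of the form
$$
N\bar{N}\int_{|x-y|<1}\frac{r_{1}(x)^{-\epsilon p}}{|x-y|^{2d-(\beta-\delta)p}}\,dx\,dy\quad\text{and}\quad N\bar{N}\int_{|x-y|<1}\frac{r_{1}(x)^{-(\epsilon+1)p}+r_1(x)^{-\epsilon p}}{|x-y|^{2d-(1-\delta)p}}\,dx\,dy.
$$

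Both of these integrals split, via the change of variable $u=x-y$, into a product $\int r_{1}(x)^{-\epsilon p}\,dx\cdot \int_{|u|<1}|u|^{-\alpha}\,du$, and the second factor is finite provided $\alpha<d$. For the first term this amounts to $(\beta-\delta)p>d$, which is exactly our hypothesis $\delta<\beta-d/p$. For the second term we need $(1-\delta)p>d$, which follows because $\beta\le 1$ forces $\delta<1-d/p$, and moreover $\beta p>d$ together with $\beta\le 1$ forces $p>d$ so the exponent $(1-\delta)p$ is positive. Collecting everything, $\bE|Y|_{\delta;p;V}^{p}\le N\bar{N}$, and Corollary \ref{cor:SobolevFull} applied $\omega$-wise then delivers both the $\cC^{\delta}(\mathbf{R}^{d};V)$-modification of $Y$ and the estimate $\bE|Y|_{\delta}^{p}\le N\bar{N}$.

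The only genuinely delicate point is the bookkeeping of exponents: one must simultaneously accommodate the weight (needing $\epsilon p>d$), the H\"older decay of the increments (needing $(\beta-\delta)p>d$), and the Lipschitz character of the weight $r_{1}^{-(l+\epsilon)}$ (needing $(1-\delta)p>d$). Once the comparison $r_{1}(x)\asymp r_{1}(y)$ for $|x-y|<1$ is used to avoid any weighted two-sided bookkeeping in $x$ and $y$ simultaneously, the estimates reduce to elementary integrability checks and the rest is automatic.
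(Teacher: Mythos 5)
Your proposal is correct and follows essentially the same route as the paper: weight $X$ by $r_1^{-(l+\epsilon)}$, verify the two integrals that make up $|\cdot|_{\delta;p;V}$ have finite expectation using \eqref{ineq:growthsob}, \eqref{ineq:diffsob}, and a Lipschitz-type estimate on $r_1^{-(l+\epsilon)}$, then apply Corollary \ref{cor:SobolevFull} and take expectations. The only cosmetic differences are that the paper uses the symmetric mean value theorem bound $|r_1(x)^{\bar p}-r_1(y)^{\bar p}|\le|\bar p|(r_1(x)^{\bar p-1}+r_1(y)^{\bar p-1})|x-y|$ in place of your one-sided bound plus the comparability $r_1(x)\asymp r_1(y)$ on $|x-y|<1$ (these are interchangeable), and your second integrand carries a harmless extra $r_1(x)^{-\epsilon p}$ term; also note the relevant requirement is $(1-\delta)p>d$, not merely positivity, but you do supply the correct justification $\delta<\beta-d/p\le 1-d/p$.
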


\begin{proof}
Fix $\delta \in (0,\beta -\frac{d}{p})$ and $\epsilon >\frac{d}{p}$. Owing to %
\eqref{ineq:growthsob}, there is a constant $N=N(d,p,\bar{N}%
,\delta ,\epsilon )$ such that 
\begin{equation*}
\int_{\mathbf{R}^{d}}\bE\left[|r_{1}(x)^{-(l+\epsilon )}X(x)|_{V}^{p}\right]dx\leq 
\bar{N}\int_{\mathbf{R}^{d}}r_{1}(x)^{-p\epsilon }dx\leq N\bar N.
\end{equation*}%
By the mean value theorem, for each $x$ and $y$ and $\bar{p}\in \mathbf{R}$, we
have 
$$
|r_{1}(x)^{\bar{p}}-r_{1}(y)^{\bar{p}}|\leq |\bar{p}|(r_{1}(x)^{\bar{p}%
-1}+r_{1}(y)^{\bar{p}-1})|x-y|.  \label{ineq:weightest}
$$
Appealing to \eqref{ineq:diffsob} and \eqref{ineq:weightest}, we obtain that
there is a constant $N=N(d,p,\delta ,\epsilon )$ such that 
\begin{gather*}
\int_{|x-y|<1}\frac{\bE\left[|r_{1}(x)^{-(l+\epsilon
)}X(x)-r_{1}(y)^{-(l+\epsilon )}X(y)|_{V}^{p}\right]}{|x-y|^{2d+\delta p}}dxdy\\
\leq \bar{N}\int_{|x-y|<1}\frac{r_{1}(x)^{-p\epsilon }+r_{1}(y)^{-p\epsilon }}{%
|x-y|^{2d-(\beta-\delta)p}}dxdy+\bar{N}\int_{|x-y|<1}\frac{%
r_{1}(y)^{pl}|r_{1}(x)^{-(l+\epsilon )}-r_{1}(y)^{-(l+\epsilon )}|^{p}}{%
|x-y|^{2d+\delta p}}dxdy\\
\leq N\bar{N}+N\bar{N}\int_{|x-y|<1}\frac{r_{1}(x)^{-p(1+\epsilon
)}+r_{1}(y)^{-p(1+\epsilon )}}{|x-y|^{2d-(1-\delta)p}}dxdy\leq N\bar{N}.
\end{gather*}%
Therefore, $\bE[r_{1}^{-(l+\epsilon )}X]_{\delta ,p}^{p}\leq N\bar N$, and
hence, by Corollary \ref{cor:Kolmogorov Embedding}, $r_{1}^{-(l+\epsilon )}X$ has a  $\cC^{\delta }(\mathbf{R}^{d};V)$-modification and the estimate follows immediately.
\end{proof}
\begin{corollary}\label{cor:SobEmbLimit}Let $(X^{(n)})_{n\in\bN}$ be a sequence of $V$-valued random field defined on $%
\mathbf{R}^{d}$. Assume that for some $p\geq 1$, $l\geq 0$ and $\beta \in
(0,1],$ with $\beta p>d$ there is a constant $\bar N>0$ such that for all $%
x,y\in \mathbf{R}^{d}$ and $n\in\mathbf{N}$,
$$
\bE\left[|X^{(n)}(x)|_{V}^{p}\right]\leq \bar Nr_{1}(x)^{lp}
$$
and 
$$
\bE\left[|X^{(n)}(x)-X^{(n)}(y)|_{V}^{p}\right]\leq \bar N(r_{1}(x)^{lp}+r_1(y)^{lp})|x-y|^{\beta p}.
$$
Moreover, assume that for each  $x\in \mathbf{R}^{d},$ 
$
\lim_{n\rightarrow\infty}\bE\left[|X^{(n)}(x)|^{p}\right]= 0.
$
Then for any $\delta \in (0,\beta -\frac{d}{p})$ and $\epsilon >\frac{d}{p}$, 
\begin{equation*}
\lim_{n\rightarrow\infty}\bE\left[|r_{1}^{-(l+\epsilon )}X^{(n)}|_{\delta }^{p}\right]=0.
\end{equation*}
\end{corollary}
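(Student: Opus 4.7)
The plan is to reduce the statement to an application of the dominated convergence theorem on the integral representation of the H\"older norm provided by Corollary \ref{cor:SobolevFull}. Specifically, applying that corollary to $g^{(n)}(x):=r_1(x)^{-(l+\epsilon)}X^{(n)}(x)$ and taking expectations yields
\begin{equation*}
\bE\bigl[|r_1^{-(l+\epsilon)}X^{(n)}|_\delta^p\bigr]\le N\int_{\mathbf{R}^d}\bE|g^{(n)}(x)|_V^p\,dx + N\int_{|x-y|<1}\frac{\bE|g^{(n)}(x)-g^{(n)}(y)|_V^p}{|x-y|^{2d+\delta p}}\,dx\,dy,
\end{equation*}
so it suffices to show that both integrals on the right tend to zero as $n\to\infty$, which I would do by dominated convergence on each.

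For the first integral, the integrand equals $r_1(x)^{-(l+\epsilon)p}\bE|X^{(n)}(x)|_V^p$, which converges to zero for every $x$ by hypothesis and is dominated uniformly in $n$ by $\bar N r_1(x)^{-\epsilon p}$, an integrable function on $\mathbf{R}^d$ since $\epsilon p>d$. For the second integral, the triangle inequality $\bE|g^{(n)}(x)-g^{(n)}(y)|_V^p\le 2^{p-1}(\bE|g^{(n)}(x)|_V^p+\bE|g^{(n)}(y)|_V^p)$ gives pointwise convergence to zero for every pair $(x,y)$ from the pointwise convergence of the first integrand.

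The only real work is producing an $n$-independent integrable dominator for the second integrand. This I would do exactly as in the proof of Corollary \ref{cor:Kolmogorov Embedding}: split $g^{(n)}(x)-g^{(n)}(y)=r_1(x)^{-(l+\epsilon)}(X^{(n)}(x)-X^{(n)}(y))+(r_1(x)^{-(l+\epsilon)}-r_1(y)^{-(l+\epsilon)})X^{(n)}(y)$, apply the mean-value bound \eqref{ineq:weightest} to the weight, and invoke the uniform moment estimates on $X^{(n)}$ to obtain a bound of the form
\begin{equation*}
\frac{\bE|g^{(n)}(x)-g^{(n)}(y)|_V^p}{|x-y|^{2d+\delta p}}\le\frac{C(r_1(x)^{-\epsilon p}+r_1(y)^{-\epsilon p})}{|x-y|^{2d-(\beta-\delta)p}}+\frac{C(r_1(x)^{-(1+\epsilon)p}+r_1(y)^{-(1+\epsilon)p})}{|x-y|^{2d-(1-\delta)p}},
\end{equation*}
with $C$ independent of $n$. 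Each term is integrable over $\{|x-y|<1\}$ under the standing hypotheses $\delta<\beta-d/p$ and $\epsilon>d/p$ (together with $\delta<1-d/p$, which follows from $\beta\le 1$), exactly as verified in the proof of Corollary \ref{cor:Kolmogorov Embedding}. There is no substantive obstacle: the argument simply combines the pointwise $L^p$-convergence hypothesis with the dominators already built in the earlier proof.
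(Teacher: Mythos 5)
Your proof is correct, but it is organized differently from the paper's. The paper does not attempt a single dominated convergence application on the double integral; instead it splits $\{|x-y|<1\}$ into $\{|x-y|\le\zeta\}$ and $\{\zeta<|x-y|<1\}$, shows the near-diagonal contribution is bounded by $\bar N\zeta^{\beta p-\delta p-d}$ \emph{uniformly in $n$} (by repeating the estimates from Corollary~\ref{cor:Kolmogorov Embedding}), and then uses the $L^p$-convergence of $r_1^{-(l+\epsilon)}X^{(n)}$ (obtained by dominated convergence on the single integral) to drive the far-diagonal part to zero, because there the kernel $|x-y|^{-(2d+\delta p)}$ is bounded above by $\zeta^{-(2d+\delta p)}$. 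That is an $\epsilon/3$-style Vitali argument. You instead build a single $n$-independent integrable dominator from the uniform moment hypotheses and invoke dominated convergence directly. Both routes rest on the same pointwise estimate from Corollary~\ref{cor:Kolmogorov Embedding}; yours is arguably more streamlined, while the paper's makes the source of the smallness more explicit (near-diagonal small by $\zeta$, far-diagonal small by $n$). One small point worth making explicit in your dominator construction: after writing $r_1(x)^{-(l+\epsilon)p}\bigl(r_1(x)^{lp}+r_1(y)^{lp}\bigr)$, you need $r_1(x)\asymp r_1(y)$ on $\{|x-y|<1\}$ to absorb the cross term into $r_1(x)^{-\epsilon p}+r_1(y)^{-\epsilon p}$; the paper glosses over the same point, but since you present the bound as a clean dominator, it is worth a half-sentence.
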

\begin{proof}
Fix $\delta \in (0,\beta -\frac{d}{p})$ and $\epsilon >\frac{d}{p}$.
Using  the Lebesgue dominated convergence theorem, we get
\begin{equation*}
\lim_{n\rightarrow\infty }\int_{\mathbf{R}^{d}}\bE\left[|r_{1}(x)^{-(l+\epsilon
)}X^{(n)}(x)|_{V}^{p}\right]dx=0,
\end{equation*}%
and therefore for each $\zeta \in (0,1)$,
\begin{equation*}
\lim_{n}\int_{\zeta<|x-y|<1 }\frac{\bE\left[|r_{1}(x)^{-(l+\epsilon
)}X_{n}(x)-r_{1}(y)^{-(l+\epsilon )}X_{n}(y)|_{V}^{p}\right]}{|x-y|^{2d+\delta p}}%
dxdy=0.
\end{equation*}%
Repeating the proof of Corollary \ref{cor:Kolmogorov Embedding}, we obtain that there is a constant $N$ such that 
\begin{gather*}
\int_{|x-y|\leq \zeta }\frac{\bE\left[|r_{1}(x)^{-(l+\epsilon
)}X^{(n)}(x)-r_{1}(y)^{-(l+\epsilon )}X^{(n)}(y)|_{V}^{p}\right]}{|x-y|^{2d+\delta p}}dxdy\\
\le \bar N\int_{|x-y|\leq \zeta }\frac{r_{1}(x)^{-p\epsilon }+r_{1}(y)^{-p\epsilon }}{%
|x-y|^{2d+(\delta -\beta )p}}dxdy+\bar N\int_{|x-y|\leq \zeta }\frac{%
r_{1}(x)^{-p(1+\epsilon )}+r_{1}(y)^{-p(1+\epsilon )}}{|x-y|^{2d+(\delta
-1)p}}dxdy\\
\le \bar N\zeta ^{\beta p-\delta p-d}.
\end{gather*}
Therefore,  $\lim_{n\rightarrow\infty}\bE\left[[r_{1}^{-(l+\epsilon )}X]_{\delta ,p}^{p}\right]=0$,
and  the statement follows.
\end{proof}

\bibliographystyle{alpha}
\bibliography{../../bibliography}

\end{document}